\title[Nonparametric adaptive Poisson regression]{Nonparametric Poisson regression\\from independent and weakly dependent observations by model selection}
\author{Martin Kroll}
\email{martin.kroll@math.uni-mannheim.de}
\thanks{This paper is an extension of results from the author's PhD thesis~\cite{kroll2017concentration}.
	I am grateful to my supervisors Jan Johannes and Martin Schlather for constructive discussions and support.
	Financial support by the Deutsche Forschungsgemeinschaft (DFG) through the Research Training Group RTG 1953 is gratefully acknowledged.\\
	Present address: \'Ecole Nationale de la Statistique et de l'Administration \'Economique (ENSAE), 5 avenue Henry Le Chatelier, F-91120 Palaiseau. \texttt{martin.kroll@ensae.fr}.}
\address{Universit\"at Mannheim}
\date{\today}
\subjclass[2010]{62G05, 62G08}
\keywords{Poisson regression. Nonparametric estimation. Projection estimator. Adaptive estimation. Model selection. Minimax lower bound.}
\begin{document}
	
\begin{abstract}
	We consider the non-parametric Poisson regression problem where the integer valued response $Y$ is the realization of a Poisson random variable with parameter $\lambda(X)$.
	The aim is to estimate the functional parameter $\lambda$ from independent or weakly dependent observations $(X_1,Y_1),\ldots,(X_n,Y_n)$ in a random design framework.
	
	First we determine upper risk bounds for projection estimators on finite dimensional subspaces under mild conditions.
	In the case of Sobolev ellipsoids the obtained rates of convergence turn out to be optimal.

	The main part of the paper is devoted to the construction of adaptive projection estimators of $\lambda$ via model selection.
	We proceed in two steps: first, we assume that an upper bound for $\Vert \lambda \Vert_\infty$ is known. 
	Under this assumption, we construct an adaptive estimator whose dimension parameter is defined as the minimizer of a penalized contrast criterion.
	Second, we replace the known upper bound on $\Vert \lambda \Vert_\infty$ by an appropriate plug-in estimator of $\Vert \lambda \Vert_\infty$.
	The resulting adaptive estimator is shown to attain the minimax optimal rate up to an additional logarithmic factor both in the independent and the weakly dependent setup.
	Appropriate concentration inequalities for Poisson point processes turn out to be an important ingredient of the proofs.
	
	We illustrate our theoretical findings by a short simulation study and conclude by indicating directions of future research.
\end{abstract}	
	
\maketitle

\section{Introduction}

We consider the non-parametric estimation of a regression function $\lambda: \X \to [0,\infty)$ defined on some Polish space $\X$ from observations $(X_1,Y_1),\ldots,(X_n,Y_n)$
where, conditional on $X_1,\ldots,X_n$, the $Y_i$ are independent and Poisson distributed with parameter $\lambda(X_i)$.
The covariates $X_1,\ldots,X_n$ are drawn from some strictly stationary process $(X_i)_{i\in \Z}$, and we will consider the two cases where either (i)~the $X_1,\ldots,X_n$ are independent, or (ii)~some adequate condition on the dependence of the underlying process $(X_i)_{i\in \Z}$ is satisfied.
Although we will also provide minimax theoretical results for the non-parametric estimation problem at hand, our focus will be on the adaptive estimation of $\lambda$, that is, the construction of estimators that depend only on the observations but not on any structural presumptions concerning the regression function.

Regression models with count data, i.e., non-negative and integer-valued response, are of interest in a wide range of applications, for instance in economics~\cite{winkelmann2008econometric}, quantitative criminology~\cite{berk2008overdispersion}, and ecology~\cite{ver_hoef2007quasi-poisson}.
The \emph{Poisson regression} model introduced above is the most natural example of such a count data regression model.
Other models with count data response include models based on the negative binomial distribution which can also deal with \emph{overdispersion}.
Such more advanced models will not be considered in this paper.
Most of the work in the area of count data regression has been devoted to parametric models, see for instance the monograph~\cite{cameron1998regression} for a comprehensive overview of methods.
Let us just mention some examples:
the paper~\cite{diggle1998model} gives an application of a Poisson regression model in a geostatistical context.
It provides a fully parametric approach and suggests MCMC techniques for fitting a model to the given data.
The paper~\cite{carota2002semiparametric} introduces a semi-parametric Bayesian model for count data regression and applies it as a prognostic model for early breast cancer data.
The article~\cite{nakaya2005geographically} considers geographically weighted Poisson regression for disease association mapping.

Despite its potential utility in many applications, non-parametric Poisson regression has hardly been studied from a theoretical point so far.
One possible approach is to apply the so-called Anscombe transform~\cite{anscombe1948transformation} to the data and treat the data as if they were Gaussian.
Another approach would be to consider the generalized linear model representation of Poisson regression and allow for varying coefficients~\cite{hastie1993varying-coefficient,fan1999statistical}.
Recent work has also considered the Poisson regression model in a high-dimensional framework using the LASSO and the group LASSO~\cite{ivanoff2016adaptive}.
Another interesting reference is~\cite{fryzlewicz2008datadriven}: in this paper the author considers a very general model with Poisson regression as a special case.
In contrast to our model, only regression with deterministic design is considered (note that the distinction between independent and weakly dependent covariates considered by us is not possible in the model with deterministic design).
Moreover, the automatic choice of the smoothing parameter is not addressed from a theoretical point of view in~\cite{fryzlewicz2008datadriven} whereas this is the major topic of our contribution.
Finally, let us mention the work~\cite{besbeas2004comparative} where an extensive simulation study for count data regression using wavelet methods was performed.
That paper contains also further references to Bayesian methods in the context of count data regression.
In this paper, we study adaptive non-parametric Poisson regression via the model selection approach.
To the best of our knowledge, this approach has not been used for non-parametric Poisson regression so far
(in a parametric framework, however, the recent paper~\cite{kamo2013bias-corrected} considers a model selection approach via a bias-corrected AIC criterion).

Note that a characteristic feature of the non-parametric Poisson regression model is the fact that it naturally incorporates heteroscedastic noise.
Besides work on regression in presence of homoscedastic errors (see for instance~\cite{baraud2000model}), there already exists research that considers model selection techniques in regression frameworks containing heteroscedasticity~\cite{saumard2013optimal}.
However, in~\cite{saumard2013optimal} the observations are of the form
\begin{equation*}
	Y = r(X) + \sigma(X) \epsilon
\end{equation*}
where $r$ is the unknown regression function to be estimated, the residuals $\epsilon$ have zero mean and variance one, and the function $\sigma$ models the unknown heteroscedastic noise level.
Note that this model does not contain the Poisson regression model to be considered in this paper as a special case.

Our paper is also more general with regard to another aspect: we do not exclusively stick to the case that the covariates $X_i$ are independent but also consider the more general case that the covariates are weakly dependent which seems to be more realistic at least in some real world scenarios.
For instance, when studying clutch sizes of bird eggs that are modeled via count data models (that usually go beyond the Poisson model studied here due to over-dispersion of the data) in ornithology~\cite{ridout2004empirical}, the covariates (e.g., temperature) are not independent when data are collected over a period of time. 
Concerning mathematical methodology, we will model this by imposing throughout conditions on the decay of the so-called $\beta$-mixing coefficients.
The class of time series with $\beta$-mixing coefficients is sufficiently large to be of interest for applications and includes stationary vector ARMA processes~\cite{mokkadem1990proprietes} or even more general autoregressive processes of the form $X_t = m(X_{t-1}) + \sigma(X_{t-1})\epsilon_t$ under mild conditions on the functions $m$ and $\sigma$ \cite{neumann2006asymptotic,doukhan1994mixing}.
Our methodological approach is mainly based on fundamental results from the article~\cite{viennet1997inequalities} that have also been exploited in a wide variety of other statistical problems:
in~\cite{baraud2001adaptive} and~\cite{asin2016adaptive_a} the authors consider the non-parametric estimation of a regression function in case of $\beta$-mixing covariates.
The paper~\cite{lacour2008adaptive} considers adaptive estimation of the transition density of a particular hidden Markov chain under the assumption that the hidden chain is $\beta$-mixing.
From a methodological point of view our approach was also inspired by the recent work~\cite{asin2016adaptive_a}.
However, in contrast to that paper, we build our construction of adaptive estimators on the model selection technique from~\cite{barron1999risk} only, whereas \cite{asin2016adaptive_a} combines the model selection approach with a more recent technique due to Goldenshluger and Lepski~\cite{goldenshluger2011bandwidth}.

Let us sketch the organisation and summarize the main contributions of the paper.
In Section~\ref{sec:methodology} we introduce notations and the general methodology used in the paper.
Section~\ref{sec:independent} is devoted to the case of independent observations: we derive a general minimax upper bound and a matching lower bound over Sobolev ellipsoids.
We then consider adaptive estimation of the regression function via model selection which has not been addressed before in the literature.
We first consider an estimator based on the a priori knowledge of an upper bound on the regression function (Subsection~\ref{subsubsec:known:xi}), and then, inspired by the approach in~\cite{comte2001adaptive}, put some effort to develop an estimator that does not depend on this assumption (Subsection~\ref{subsubsec:unknown:xi}).
The risk bound of the adaptive estimators is deteriorated by a logarithmic factor only in comparison with the minimax optimal rate.
In Section~\ref{sec:dependent} we extend the findings from Section~\ref{sec:independent} to the weakly dependent case. The proofs in this case are more demanding than in the independent case, but the results are essentially the same.
Subsequent to our theoretical findings, we provide a short simulation study in Section~\ref{sec:simulations}.
In Section~\ref{sec:discussion} we conclude and discuss perspectives for future research.
All proofs are deferred to the appendix.

\section{Methodology}\label{sec:methodology}

\subsection{Notation} 
Throughout the paper, let $(\X, \Xs, \P)$ be a fixed probability space and denote by $L^2=L^2(\X, \Xs, \P)$ the space of square-integrable random variables.
The space $\X$ is assumed to be Polish with $\Xs$ being the $\sigma$-field generated by the topology of $\X$.
The regression function $\lambda$ is always assumed to belong to $L^2$.
For $p=1,2$, let $\Vert \cdot \Vert_p$ denote the usual $L^p$ norm, i.e., $\Vert g \Vert_p = \left( \int_\X \vert g \vert^p d\P \right)^{1/p}$ (in the case $p=2$ we usually suppress the index $p$).
In the special case $p=2$, we denote the scalar product corresponding to the norm $\Vert \cdot \Vert_2$ by $\langle \cdot, \cdot \rangle$.
$\Vert \cdot \Vert_\infty$ denotes the sup norm on the space $\X$.
We write $a_n \lesssim b_n$ if $a_n \leq Cb_n$ holds for all $n \in \N$ with some constant independent of $n$.

\subsection{Projection estimators}
Let $\Sc_n$ be a finite-dimensional subspace of $L^2$.
For a subspace $\Sc_\mf \subseteq \Sc_n$ with orthonormal basis $\{  \phi_\eta \}_{\eta \in \Ic_\mf}$ ($\Ic_\mf$ being an appropriate index set of cardinality $\D_\mf$ equal to the dimension of the model) we denote by $\widehat \lambda_\mf$ the projection estimator given through
\begin{equation}\label{eq:def:proj:est}
	\lambdahat_\mf = \sum_{\eta \in \Ic_ \mf} \thetahat_\eta \phi_\eta
\end{equation}
where $\thetahat_\eta = \frac{1}{n} \sum_{i=1}^{n} Y_i \phi_\eta(X_i)$ is an unbiased estimator of the true generalized Fourier coefficient $\theta_\eta = \int \lambda(x) \phi_\eta(x) \P (dx)$.
As often in non-parametric statistics, the theoretical investigation of the estimator $\lambdahat_\mf$ will be based on the bias-variance decomposition of the mean integrated squared error
\begin{equation*}
	\E \Vert \lambdahat_\mf - \lambda \Vert^2 = \Vert \lambda - \lambda_\mf \Vert^2 + \E \Vert \lambdahat_\mf - \lambda_\mf \Vert^2 =  \Vert \lambda - \lambda_\mf \Vert^2 + \sum_{\eta \in \Ic_\mf} \E [ (\thetahat_\eta - \theta_\eta)^2 ]
\end{equation*}
where $\lambda_\mf = \sum_{\eta \in \Ic_\mf} \theta_\eta \phi_\eta$ denotes the orthogonal projection of $\lambda$ on the space $\Sc_\mf$. 
For our theoretical treatment we impose the following condition on the models $\Sc_\mf = \spn \{\phi_\eta\}_{\eta \in \Ic_\mf}$.

\begin{assumption}\label{ass:model}
	There exists a positive constant $\Phi$ such that for any $f \in \Sc_\mf$ it holds $\Vert f \Vert_\infty \leq \Phi \sqrt{\D_m} \Vert f \Vert$.
\end{assumption}

\begin{remark}
	As remarked by~\cite{viennet1997inequalities}, Assumption~\ref{ass:model} is equivalent to the assumption that for \emph{any} orthonormal basis $\{ \phi_\eta \}_{\eta \in \Ic_\mf}$ of $\Sc_\mf$ it holds $\Vert \sum_{\eta \in \Ic_\mf} \phi_\eta^2 \Vert_\infty \leq \Phi^2 \D_\mf$.
	In our proofs, we will exploit this characterization of Assumption~\ref{ass:model}.
	The class of models satisfying Assumption~\ref{ass:model} incorporates, for instance, all bounded bases (such as the trigonometrical basis) as well as piecewise polynomials, splines, and wavelets (see~\cite{viennet1997inequalities}, p.~475 for further details).
\end{remark}

When studying adaptive estimators we have to impose further conditions on the set of potential models (see Assumption~\ref{ass:model:adaptive}).

\subsection{Dependency assumptions}

In this paper, we aim at developing the theory both for independent and weakly dependent observations.
In order to describe dependency between subsequent observations of the covariates $X_1,\ldots,X_n$, several concepts of mixing coefficients have been introduced (see~\cite{bosq1998nonparametric} for a comprehensive introduction):
in this paper we consider the $\beta$-mixing (or \emph{absolutely regular-mixing}) coefficients that were originally introduced in~\cite{kolmogorov1960strong}.
For a probability space $(\Omega, \As, \Q)$ and two sub-$\sigma$-fields $\Us$ and $\Vs$ of $\As$ the $\beta$-mixing coefficient is defined by
\begin{equation*}
	\beta(\Us, \Vs) = \frac{1}{2} \sup \left\lbrace  \sum_i \sum_j \vert \Q(U_i) \Q(V_j) -\Q (U_i \cap V_j) \vert \right\rbrace 
\end{equation*}
where the supremum is taken over all finite partitions $(U_i)_{i \in I}$ and $(V_j)_{j \in J}$ of $\Omega$ which are measurable with respect to $\Us$ and $\Vs$, respectively.
For random variables $X_1, X_2$ we define $\beta(X_1, X_2)$ as the $\beta$-mixing coefficient between the $\sigma$-fields generated by $X_1$ and $X_2$, respectively, i.e., $\beta(X_1, X_2) = \beta(\sigma(X_1), \sigma(X_2))$.
For a strictly stationary process $(X_i)_{i \in \Z}$ of random elements in a Borel space, let us denote $\Fs_0=\sigma(\{X_i\}_{i \leq 0})$ and $\Fs_k= \sigma(\{X_i\}_{i \geq k})$ for any positive integer $k$.
The sequence of $\beta$-mixing coefficients $(\beta_k)_{k \geq 0}$ of the process $(X_i)_{i \in \Z}$ is defined by $\beta_k = \beta(\Fs_0, \Fs_k)$ and the sequence $(X_i)_{i \in \Z}$ is called $\beta$-mixing (or \emph{absolutely regular}) if $\beta_k \to 0$ as $k \to \infty$.

Examples of $\beta$-mixing processes are given in~\cite{lacour2008adaptive} and include, for instance, autoregressive processes of order $1$.
Concerning the minimax theory, the main difficulty in the weakly dependent in contrast to the independent case is to find suitable bounds for the variance of the estimated coefficients $\thetahat_\eta$ in the bias-variance decomposition.
The main tool to deal with this problem will be Lemma~\ref{lem:var:bound:beta} below.

\subsection{Adaptive estimation via model selection}

Given a finite collection of models $\Mc_n$, the model corresponding to the optimal estimator from the set $\{\lambdahat_\mf\}_{\mf \in \Mc_n}$ depends on the unknown $\lambda$ and is thus not accessible.
Often it is possible to choose an optimal model when imposing smoothness restrictions on the unknown $\lambda$ (see, for instance, Example~\ref{example:sobolev} below) but even this assumption usually seems to be too hard in practise where one wants to construct an estimator of $\lambda$ in a fully-data driven way.
In applications, cross-validation techniques~\cite{arlot2010survey} are quite popular.
There are several other methods that aim at the construction of one single estimator from a given set of estimators, among them Lepski's method~\cite{lepski1991problem}, aggregation~\cite{bunea2007aggregation,lecue2009aggregation,rigollet2012sparse} or model selection~\cite{barron1999risk}.
In this paper, we exclusively stick to the non-parametric model selection approach that was mainly developed in the 1990s (see~\cite{barron1999risk,birge1997model,massart2007concentration} for comprehensive representations of the subject).

The principal idea of the model selection approach is to choose a model $\mfhat$ from the collection $\Mc_n$ by means of a so-called penalized contrast criterion
\begin{equation*}
	\mfhat = \argmin_{\mf \in \Mc_n} \{ \Upsilon(\lambdahat_\mf) + \pen(\mf) \}
\end{equation*}
where $\Upsilon$ is the contrast function and $\pen$ the penalty (in case of non-uniqueness of the minimizer, one chooses an arbitrary one).
Usually, one can prove for the adaptive estimator $\lambdahat_\mfhat$ oracle inequalities of the form
\begin{equation}\label{eq:oracle:informal}
	\E \Vert \lambdahat_\mfhat - \lambda \Vert^2 \lesssim \inf_{\mf \in \Mc_n} \{ \Vert \lambda - \lambda_\mf \Vert^2 + \pen(\mf) \} + \text{'terms of lower order'}. 
\end{equation}
The general form of the result already shows that if one is able to choose the penalty term of the same order as the variance under the model $\mf$, the term over which the infimum is taken mimicks the bias-variance trade-off and one obtains an estimator that attains the optimal rate of convergence.
In our case, we will have to introduce an additional logarithmic factor in the penalty leading to a deterioration of the optimal rate by this logarithmic factor in the adaptive case.
A crucial tool in order to prove oracle inequalities of the above form are suitable concentration inequalities, and in our Poisson regression setup we will use concentration results for Poisson processes.
More precisely, our theoretical analysis is based on a special consequence of a Talagrand type inequality (our Lemma~\ref{lem:conc:cox}) that has turned out to be fruitful in non-parametric estimation (see for instance~\cite{chagny2013estimation}, Proposition~2.2) but has only been transferred to the Poisson setup in~\cite{kroll2017concentration}.

Concerning the case of dependent covariates, an additional difficulty appears by the fact that concentration inequalities which are crucial for the proof in the independent case are not available in the dependent one.
In order to deal with this case, we will exploit a construction due to~\cite{viennet1997inequalities} where the sample $X_1,\ldots,X_n$ is substituted by another sample $X_1^\ast,\ldots,X_1^\ast$ such that non-neighbouring blocks of a certain size of the $X_i^\ast$ are independent.
Simultaneously, the $X_i^\ast$ are constructed such that they coincide with the original $X_i$ with high probability. 
Concentration inequalities will then be applied to the independent blocks instead of to the original $X_i$.
The adaptive estimation will be discussed in detail in  Subsections~\ref{subs:ind:adaptive} (independent case) and~\ref{subs:dep:adaptive} (weakly dependent case), respectively.

\section{Independent observations}\label{sec:independent}

We first focus on the case of independent observations.
In this case we will, besides an upper bound on the risk, also derive a lower bound which shows the optimality of the estimator over certain classes of $L^2$-ellipsoids.
Subsection~\ref{subs:ind:adaptive} is then dedicated to the adaptive estimation by model selection.

\subsection{Upper bound}

The following proposition states an upper bound for a general model.

\begin{proposition}\label{prop:ind:upper}
	For a model $\mf$ satisfying Assumption~\ref{ass:model}, let $\lambdahat_\mf$ be the corresponding projection estimator defined in~\eqref{eq:def:proj:est}.
	Then
	\begin{equation*}
		\E \Vert \lambdahat_\mf - \lambda \Vert^2 \leq \Vert \lambda_\mf - \lambda \Vert^2 + \frac{\Phi^2 \D_\mf}{n} \cdot (\Vert \lambda \Vert^2 + \Vert \lambda \Vert_1).
	\end{equation*} 
\end{proposition}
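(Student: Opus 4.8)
The plan is to build on the bias--variance decomposition of the mean integrated squared error that is already recorded above,
\begin{equation*}
	\E \Vert \lambdahat_\mf - \lambda \Vert^2 = \Vert \lambda - \lambda_\mf \Vert^2 + \sum_{\eta \in \Ic_\mf} \E [ (\thetahat_\eta - \theta_\eta)^2 ],
\end{equation*}
so that it only remains to bound the variance term $\sum_{\eta \in \Ic_\mf} \E [ (\thetahat_\eta - \theta_\eta)^2 ]$ by $\Phi^2 \D_\mf n^{-1}(\Vert \lambda \Vert^2 + \Vert \lambda \Vert_1)$; the bias term $\Vert \lambda - \lambda_\mf \Vert^2$ already matches the claimed inequality verbatim.

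First, since $\thetahat_\eta = n^{-1} \sum_{i=1}^n Y_i \phi_\eta(X_i)$ is unbiased for $\theta_\eta$ and, in the independent case, the pairs $(X_i, Y_i)$ are i.i.d., I would write
\begin{equation*}
	\E [ (\thetahat_\eta - \theta_\eta)^2 ] = \frac1n \left( \E [ Y_1^2 \phi_\eta^2(X_1) ] - \theta_\eta^2 \right) \leq \frac1n \, \E [ Y_1^2 \phi_\eta^2(X_1) ].
\end{equation*}
Conditioning on $X_1$ and using that, given $X_1$, the response $Y_1$ is Poisson with mean $\lambda(X_1)$, one has $\E [ Y_1^2 \mid X_1 ] = \lambda(X_1) + \lambda(X_1)^2$, and hence, by the tower property, $\E [ Y_1^2 \phi_\eta^2(X_1) ] = \int (\lambda + \lambda^2) \phi_\eta^2 \, d\P$.

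Summing over $\eta \in \Ic_\mf$ and interchanging sum and integral then gives
\begin{equation*}
	\sum_{\eta \in \Ic_\mf} \E [ (\thetahat_\eta - \theta_\eta)^2 ] \leq \frac1n \int (\lambda + \lambda^2) \Bigl( \sum_{\eta \in \Ic_\mf} \phi_\eta^2 \Bigr) \, d\P \leq \frac{\Phi^2 \D_\mf}{n} \int (\lambda + \lambda^2) \, d\P,
\end{equation*}
where the last inequality uses $\lambda + \lambda^2 \geq 0$ together with the reformulation of Assumption~\ref{ass:model} given in the remark, namely $\Vert \sum_{\eta \in \Ic_\mf} \phi_\eta^2 \Vert_\infty \leq \Phi^2 \D_\mf$ (which applies since $\{ \phi_\eta \}_{\eta \in \Ic_\mf}$ is an orthonormal basis of $\Sc_\mf$). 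Finally, because $\lambda \geq 0$, we have $\int \lambda \, d\P = \Vert \lambda \Vert_1$ and $\int \lambda^2 \, d\P = \Vert \lambda \Vert^2$, which yields the announced bound.

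Regarding difficulty: none of these steps is genuinely hard --- the proof amounts to a short computation --- and the only points deserving a little care are the use of the conditional Poisson second moment inside an expectation and the check that the characterization of Assumption~\ref{ass:model} may be applied to the basis at hand. I would expect the real work to appear only later, in the weakly dependent analogue, where the variance of $\thetahat_\eta$ no longer factorizes over $i$ and one must instead invoke Lemma~\ref{lem:var:bound:beta} in place of the elementary i.i.d.\ variance identity used here.
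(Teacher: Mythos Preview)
Your proof is correct and follows essentially the same route as the paper's: both start from the bias--variance decomposition, bound $\Var(\thetahat_\eta)$ by $n^{-1}\E[Y_1^2\phi_\eta^2(X_1)]$, sum over $\eta$ and apply the characterization $\Vert\sum_\eta \phi_\eta^2\Vert_\infty\leq \Phi^2\D_\mf$, and finish with the Poisson second-moment identity $\E[Y_1^2\mid X_1]=\lambda(X_1)+\lambda(X_1)^2$. The only cosmetic difference is that the paper pulls $\sum_\eta\phi_\eta^2$ out before taking the conditional expectation, writing $\E[\sum_\eta Y_1^2\phi_\eta^2(X_1)]\leq \Phi^2\D_\mf\,\E Y_1^2$ directly, whereas you condition first and integrate afterwards.
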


Unfortunately, given a collection of models, a model optimizing the upper bound cannot be specified in advance since the bias term depends on the unknown regression function.
The aim of the following example is to illustrate the general result of Proposition~\ref{prop:ind:upper} in the special case when the models under consideration are given by nested spaces generated by the trigonometric basis.
As usual in non-parametric statistics, one can determine a rate optimal model by imposing some \emph{a priori} smoothness conditions on the regression function.

\begin{example}[$L^2$-ellipsoids]\label{example:sobolev}
In order to illustrate the result of Proposition~\ref{prop:ind:upper}, let us consider the special case of the trigonometric basis on the space $(\X, \Xs, \P)=([0,1], \Bc([0,1]), dx)$:
for $\mf \in \N_0$, let $\Ic_\mf=\{ -m,...,m \}$ and $\Sc_m = \spn \{ \phi_\eta \}_{\eta \in \Ic_\mf}$ where
\begin{equation*}
	\phi_0 \equiv 1, \qquad \phi_j(x) = \sqrt{2} \cos(2\pi j x), \qquad \text{and} \qquad \phi_{-j}(x)= \sqrt{2} \sin(2\pi j x) \qquad \text{for}\qquad j \in \N.
\end{equation*}
Smoothness of the regression function $\lambda$ may be expressed by assuming its membership to a suitable ellipsoid
\begin{equation*}
	\Theta_\gamma^R = \bigg\{ \lambda = \sum_{j \in \Z} \theta_j \phi_j \in L^2 : \lambda \geq 0 \text{ and } \sum_{j \in \Z} \theta_j^2 \gamma_j^2 \leq R \bigg\}
\end{equation*}
where $R > 0$ and $\gamma = (\gamma_j)_{j \in \Z}$ is a strictly positive symmetric sequence such that $\gamma_0=1$ and the sequence $(\gamma_n)_{n \in \N_0}$ is non-decreasing.
Typical examples of $\gamma$ are $\gamma_j=\vert j \vert^p$ (for $j \geq 1$) and $\gamma_j = \exp(p \vert j \vert)$ for $p \geq 0$.
Under the stated assumption on the sequence $\gamma$, the bias term in the proof of Proposition~\ref{prop:ind:upper} may be bounded as follows (as introduced above, $\lambda_\mf$ denotes the projection of $\lambda$ on $\Sc_\mf$):
\begin{equation*}
	\Vert \lambda_\mf - \lambda \Vert^2 = \sum_{\vert j \vert > \mf} \theta_j^2 \leq \gamma_\mf^{-2} \sum_{\vert j \vert > \mf} \theta_j^2 \gamma_j^2 \leq R \gamma_\mf^{-2}.
\end{equation*}
The trade-off between squared bias and variance is thus equivalent to the best compromise between $\gamma_\mf^{-2}$ and $\mf n^{-1}$. 
In the polynomial case $\gamma_j = \vert j \vert^{p}$, the best compromise is realized by $\mopt \approx n^{1/(2p+1)}$, and we get the classical non-parametric rate $n^{-2p/(2p+1)}$.
In the exponential case $\gamma_j = \exp(p\vert j \vert)$, we have $\mopt \asymp \log n$ and the rate is $\log n/n$.
\end{example}

\subsection{Lower bound for Sobolev ellipsoids}

The following theorem provides a lower bound on the minimax risk in the framework of Example~\ref{example:sobolev}.

\begin{theorem}\label{thm:lower}
	Consider $\Theta_\gamma^R$ defined as in Example~\ref{example:sobolev}.
	Let $\gamma=(\gamma_j)_{j \in \Z}$ be a strictly positive symmetric sequence such that $\gamma_0 = 1$ and the sequence $(\gamma_n)_{n \in \N_0}$ is non-decreasing.
	Set $\mopt = \argmin_{k \in \N_0} \max \{ \gamma_k^{-2}, \frac{2k+1}{n} \}$ and $\Psi_n = \max\{ \gamma_\mopt^{-2}, \frac{2\mopt+1}{n} \}$.
	Assume that
	\begin{enumerate}
		\item\label{PR:it:C1} $\Gamma \defeq \sum_{j \in \Z} \gamma_j^{-2} < \infty$, and
		\item\label{PR:it:C2} $0 < \eta^{-1}\defeq \inf_{n \in \N} \Psi_n^{-1} \min \{ \gamma_{\mopt}^{-2}, \frac{2\mopt+1}{n} \}$
		for some $1 \leq \eta < \infty$.
	\end{enumerate}
	Then, for any $n \in \N$,
	\begin{equation*}
	\inf_{\widetilde \lambda} \sup_{\lambda \in \Theta_\gamma^R} \E  \Vert \widetilde \lambda - \lambda \Vert^2  \gtrsim \Psi_n
	\end{equation*}
	where the infimum is taken over all estimators $\widetilde \lambda$ of $\lambda$ based on the observation of the tuples $(X_1,Y_1),\ldots,(X_n,Y_n)$.
\end{theorem}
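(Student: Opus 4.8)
The plan is to use the standard reduction scheme for nonparametric lower bounds via a finite collection of hypotheses, as in the approach of Assouad/Fano-type arguments. More precisely, since we must capture a rate of the form $\max\{\gamma_{\mopt}^{-2},(2\mopt+1)/n\}$ which is (up to the constant $\eta$) equal to both $\gamma_{\mopt}^{-2}$ and $(2\mopt+1)/n$, I would work with the index set $\Ic_{\mopt}=\{-\mopt,\dots,\mopt\}$ of cardinality $2\mopt+1$ and build a cube of hypotheses indexed by sign vectors $\varepsilon\in\{0,1\}^{\Ic_{\mopt}}$, setting
\begin{equation*}
	\lambda_\varepsilon = \lambda_0 + \delta \sum_{\eta\in\Ic_{\mopt}} \varepsilon_\eta\,\phi_\eta,
\end{equation*}
where $\lambda_0$ is a fixed strictly positive constant function (say $\lambda_0\equiv c$ for a suitable $c>0$) and $\delta>0$ is an amplitude to be tuned. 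Two constraints must be met: first, $\lambda_\varepsilon\geq 0$, which forces $\delta\lesssim 1$ since the $\phi_\eta$ are bounded ($\Vert\phi_\eta\Vert_\infty\leq\sqrt2$ and there are $2\mopt+1$ of them, so one needs $\delta\sqrt2\,(2\mopt+1)\leq c$ — actually one only needs control per point, but boundedly many terms contribute, so $\delta\asymp 1/(2\mopt+1)$ is the safe choice; this is where the smallness of $\Psi_n$, hence the condition $\Gamma<\infty$ and the balance condition~\ref{PR:it:C2}, will enter). Second, $\lambda_\varepsilon\in\Theta_\gamma^R$, i.e. $\sum_j\theta_j^2\gamma_j^2\leq R$; since only the coefficients with $|j|\leq\mopt$ are perturbed and $\gamma$ is non-decreasing, this reads $c^2 + \delta^2\sum_{\eta\in\Ic_{\mopt}}\gamma_\eta^2\leq R$, and by definition of $\mopt$ one has $\gamma_{\mopt}^{-2}\asymp (2\mopt+1)/n$, so $\delta^2(2\mopt+1)\gamma_{\mopt}^2\lesssim \delta^2(2\mopt+1)^2 n/(2\mopt+1)=\delta^2(2\mopt+1)n$; choosing $\delta^2\asymp 1/((2\mopt+1)n)$ — i.e. $\delta\asymp((2\mopt+1)n)^{-1/2}$ — keeps us inside the ellipsoid for $n$ large and simultaneously gives the right separation. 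Note $\delta^2(2\mopt+1)\asymp 1/n\to 0$, consistent with $\Psi_n\to0$.

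Next I would lower bound the pairwise separation: for $\varepsilon,\varepsilon'$ differing in one coordinate, $\Vert\lambda_\varepsilon-\lambda_{\varepsilon'}\Vert^2=\delta^2\asymp 1/((2\mopt+1)n)$, which is too small; instead one uses the cube structure and Assouad's lemma, for which the relevant quantity is $\sum_{\eta}\delta^2\Vert\phi_\eta\Vert^2=\delta^2(2\mopt+1)\asymp 1/n$ — still not $\Psi_n$. The resolution is that in the regime where $\Psi_n=\gamma_{\mopt}^{-2}$ the two are comparable up to $\eta$ anyway, and more to the point the correct separation to track is over the whole cube, giving $\gtrsim\delta^2(2\mopt+1)\asymp(2\mopt+1)/((2\mopt+1)n)$ — here I realize the amplitude must be rebalanced: the familiar choice in this literature is $\delta^2\asymp\Psi_n/(2\mopt+1)$, so that $\delta^2(2\mopt+1)\asymp\Psi_n$ is exactly the target rate, while the ellipsoid constraint $\delta^2(2\mopt+1)\gamma_{\mopt}^2\asymp\Psi_n\gamma_{\mopt}^2\asymp 1\lesssim R$ (using $\Psi_n\asymp\gamma_{\mopt}^{-2}$ by~\ref{PR:it:C2}) holds for $R$ large enough, and the positivity constraint holds since $\delta\to0$. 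With $\delta$ so chosen, Assouad's lemma reduces the minimax risk to bounding, from below, the testing affinity between $\lambda_\varepsilon$ and $\lambda_{\varepsilon'}$ for neighbouring $\varepsilon,\varepsilon'$, uniformly over the cube.

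The main obstacle, and the step requiring genuine care, is the \emph{likelihood/distance computation for the Poisson regression model in random design}. The observation $(X_i,Y_i)$ has joint law $\P(dx)\otimes\mathrm{Pois}(\lambda(x))$, so the Kullback–Leibler divergence between the $n$-fold product laws is $n\cdot\E_{X\sim\P}\big[\mathrm{KL}(\mathrm{Pois}(\lambda_\varepsilon(X))\,\Vert\,\mathrm{Pois}(\lambda_{\varepsilon'}(X)))\big]$, and one uses the explicit Poisson KL formula $\mathrm{KL}(\mathrm{Pois}(a)\Vert\mathrm{Pois}(b)) = a\log(a/b) - a + b$. For $\lambda_\varepsilon,\lambda_{\varepsilon'}$ differing in one coordinate by $\pm\delta\phi_\eta$ and both bounded below by $c/2$ say, a Taylor expansion gives $\mathrm{KL}\lesssim \delta^2\phi_\eta(X)^2/c$ pointwise, hence the per-pair KL is $\lesssim n\delta^2\Vert\phi_\eta\Vert^2/c = n\delta^2/c\asymp n\Psi_n/((2\mopt+1)c)$; since $\Psi_n\asymp(2\mopt+1)/n$ by~\ref{PR:it:C2} this is $\asymp 1/c = O(1)$, a constant strictly controllable below $1$ (or below $\log 2$, or whatever Assouad/Fano requires) by taking $\delta$ with a small enough multiplicative constant. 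Pinsker's inequality then bounds the total variation / Hellinger affinity away from zero uniformly over neighbouring cube vertices, and Assouad's lemma yields $\inf_{\widetilde\lambda}\sup_{\lambda\in\Theta_\gamma^R}\E\Vert\widetilde\lambda-\lambda\Vert^2\gtrsim (2\mopt+1)\,\delta^2\asymp\Psi_n$, which is the claim. The conditions~\ref{PR:it:C1} ($\Gamma<\infty$) and~\ref{PR:it:C2} ($\eta^{-1}>0$) are used respectively to guarantee that the constant-function baseline plus the small perturbation stays in the ellipsoid as $\mopt$ grows, and to guarantee the two-sided balance $\gamma_{\mopt}^{-2}\asymp(2\mopt+1)/n\asymp\Psi_n$ that makes the KL bound a constant; I would keep track of where the constant $\eta$ enters the final multiplicative constant.
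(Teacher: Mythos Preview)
Your approach is essentially the same as the paper's: both build an Assouad-type cube of hypotheses on the coefficients indexed by $\{-\mopt,\dots,\mopt\}$, with a strictly positive constant baseline and an amplitude of order $n^{-1/2}$, then bound the pairwise information distance to invoke the standard reduction. The paper uses sign vectors $\tau\in\{\pm1\}^{2\mopt+1}$ and controls the Hellinger affinity via Roos's total-variation bound for Poisson laws, whereas you use a $\{0,1\}$ cube and KL plus Pinsker; these are interchangeable.

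One point does need repair: your justification ``the positivity constraint holds since $\delta\to0$'' is not correct. With $\delta\asymp n^{-1/2}$ the sup norm of the perturbation is $\delta\sum_{|\eta|\leq\mopt}|\phi_\eta|\lesssim n^{-1/2}(2\mopt+1)$, and since $(2\mopt+1)\asymp n\Psi_n$ by condition~\ref{PR:it:C2}, this is of order $n^{1/2}\Psi_n$, which is bounded but does \emph{not} tend to zero. The paper handles this by the Cauchy--Schwarz step
\[
\sum_{|j|\leq\mopt}1 \leq \Big(\sum_{j}\gamma_j^{-2}\Big)^{1/2}\Big(\sum_{|j|\leq\mopt}\gamma_j^2\Big)^{1/2}\leq \Gamma^{1/2}\,\gamma_{\mopt}(2\mopt+1)^{1/2},
\]
together with $\gamma_{\mopt}^2(2\mopt+1)/n\leq\eta$, which yields a sup-norm bound of order $(\Gamma\eta)^{1/2}$ times the amplitude constant; positivity then follows by choosing that constant small enough relative to the baseline (in the paper, via $\zeta=\min\{1/(\Gamma\eta),2/R\}$). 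This is exactly where conditions~\ref{PR:it:C1} and~\ref{PR:it:C2} enter the positivity check, as you anticipated but did not carry out.
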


The heuristic behind condition~\eqref{PR:it:C2} is that for the optimal model $\mopt$ the corresponding squared bias $\gamma_{\mopt}^{-2}$ and the variance $\frac{2\mopt+1}{n}$ should be of the same order.
It is satisfied for both the case that $\gamma_j = \vert j \vert^{p}$ and the case that $\gamma_j = \exp(p\vert j \vert)$.

The lower bound of Theorem~\ref{thm:lower} shows together with Example~\ref{example:sobolev} that under the given assumptions the rate $\Psi_n$ is optimal.

\subsection{Adaptive estimation}\label{subs:ind:adaptive}

In order to construct an adaptive estimator of the regression function we stick to the model selection method sketched in the introduction.
For this approach we define the contrast function
\begin{equation}\label{eq:def:contrast}
	\Upsilon_n(f) = \Vert f \Vert^2 - 2 \langle \lambdahat_n,f \rangle
\end{equation}
for $f \in L^2$ and $\lambdahat_n$ is the projection estimator associated to the subspace $\Sc_n$.

The aim of the model selection approach is to select from a given collection $\Mc_n$ of submodels of $\Sc_n$ in a completely data-driven way a candidate that behaves as well as possible as the best model in the collection in the sense of an oracle inequality like~\eqref{eq:oracle:informal}.
In order to establish our theoretical results, we have to introduce a further assumption on the collection of models.

\begin{assumption}\label{ass:model:adaptive}
	The models $\mf \in \Mc_n$ are nested in the sense that for $\mf, \mf' \in \Mc_n$ the inequality $\D_\mf \leq \D_{\mf'}$ implies that $\Sc_\mf \subseteq \Sc_{\mf'}$ (in particular, this implies that there is at most one model with $\D_\mf = d$ for a given dimension $d \in \N$).
	In addition, there exist universal constants $c_\mf, c_{\Mc} > 0$ such that
	\begin{itemize}
		\item $\D_\mf \leq c_\mf n$ for all $\mf \in \Mc_n$ and $n \in \N$,
		\item $\vert \Mc_n \vert \leq c_{\Mc} n$ for all $n \in \N$.
	\end{itemize}
	The subspace associated with the maximal model in the collection $\Mc_n$ will be denoted with $\Sc_n$ and the corresponding basis with $\{ \phi_\eta \}_{\eta \in \Ic_n}$.
\end{assumption}

The nestedness assumption together with Assumption~\ref{ass:model} is quite standard (see \cite{birge1997model}, p.~58) and satisfied, e.g., by the trigonometric basis, piecewise polynomials and wavelets (see \cite{birge1997model}, p.~71).

\subsubsection{Known upper bound of the regression function}\label{subsubsec:known:xi}

Before we derive a fully adaptive estimator we first stick to the following assumption.
\begin{assumption}\label{ass:xi:known}
	We have access to some $\xi > 0$ such that $\Vert \lambda \Vert_\infty \leq \xi$.
\end{assumption}
Based on the knowledge of $\xi$, we define for a model $\mf \in \Mc_n$ the penalty:
\begin{equation}\label{eq:pen:xi:known}
	\pen(\mf) = 24 \mu \cdot \frac{\Phi^2 \D_\mf}{n} + 400 \mu \cdot \Phi^2\D_\mf \cdot \frac{\log(n+2)}{n}
\end{equation}
where $\mu  = 1 \vee \xi^2$.
Then a model $\mftilde$ is chosen as follows:
\begin{equation*}
	\mftilde = \argmin_{\mf \in \Mc_n} \{ \Upsilon(\lambdahat_\mf) + \pen(\mf) \}
\end{equation*}
(in case that the minimizer is not unique one chooses an arbitrary one).

\begin{remark}
 Note that the definition of the penalty in~\eqref{eq:pen:xi:known} (and in all subsequent definitions within this work) contains an additional logarithmic term which is in contrast to the standard Gaussian regression setup where such a factor is not necessary (see~\cite{massart2007concentration}, Section~4.3.3).
 This additional term will cause an extra logarithmic factor in the adaptive upper bounds in contrast to the minimax upper bounds.
 Unfortunately, it is not clear to us if this factor is indeed unevitable or just an artefact of our method.
 A heuristic explanation for the necessity of this extra logarithmic factor in our case would be the fact that the concentration inequalities used for our theoretical analysis are not derived in a sub-gaussian setup but in a Poisson setup with sub-gamma tails.
\end{remark}

\begin{theorem}\label{thm:adaptive:xi:known}
	For every $n \in \N$, let $\Mc_n$ be a collection of models such that Assumption~\ref{ass:model} is satisfied for all $\mf \in \Mc_n$. Further assume that the collection $\Mc_n$ satisfies Assumption~\ref{ass:model:adaptive} and that Assumption~\ref{ass:xi:known} holds.
Then
	\begin{equation*}
		\E \Vert  \lambdahat_\mftilde - \lambda \Vert^2 \lesssim \min_{\mf \in \Mc_n} \max \{ \Vert \lambda - \lambda_\mf \Vert^2 , \pen(\mf) \} + \frac{1}{n}.
	\end{equation*}
\end{theorem}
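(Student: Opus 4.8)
The plan is to follow the classical model selection scheme of Barron--Birg\'e--Massart, adapted to the Poisson setting via the Talagrand-type concentration inequality announced as Lemma~\ref{lem:conc:cox}. Write $\nu_n(f) = \langle \lambdahat_n, f\rangle - \langle \lambda, f\rangle = \frac{1}{n}\sum_{i=1}^n\bigl(Y_i\phi(X_i,\cdot) - \E[Y_i\phi(X_i,\cdot)]\bigr)$-type centered linear functional, so that the contrast satisfies the elementary identity $\Upsilon_n(f) - \Upsilon_n(g) = \Vert f-\lambda\Vert^2 - \Vert g-\lambda\Vert^2 - 2\nu_n(f-g)$ for $f,g\in\Sc_n$. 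Fix an arbitrary reference model $\mf\in\Mc_n$. By definition of $\mftilde$ as the minimizer of $\Upsilon_n(\lambdahat_{\mf'}) + \pen(\mf')$, we have $\Upsilon_n(\lambdahat_\mftilde) + \pen(\mftilde) \le \Upsilon_n(\lambdahat_\mf) + \pen(\mf)$. Expanding both sides with the contrast identity and rearranging yields, for every $\mf\in\Mc_n$,
\begin{equation*}
	\Vert \lambdahat_\mftilde - \lambda\Vert^2 \le \Vert \lambdahat_\mf - \lambda\Vert^2 + 2\nu_n(\lambdahat_\mftilde - \lambdahat_\mf) + \pen(\mf) - \pen(\mftilde).
\end{equation*}
The first two deterministic-looking terms are controlled directly: $\E\Vert \lambdahat_\mf - \lambda\Vert^2$ is bounded by Proposition~\ref{prop:ind:upper}, which under Assumption~\ref{ass:xi:known} gives $\Vert\lambda-\lambda_\mf\Vert^2 + \Phi^2\D_\mf n^{-1}(\Vert\lambda\Vert^2+\Vert\lambda\Vert_1)$, and the variance factor is $O(\mu)$ since $\Vert\lambda\Vert^2 + \Vert\lambda\Vert_1 \le \xi^2 + \xi \lesssim \mu$ when $\P$ is a probability measure; this is absorbed into $\pen(\mf)$.

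The heart of the argument is to handle the random term $2\nu_n(\lambdahat_\mftilde - \lambdahat_\mf)$. Since $\lambdahat_\mftilde - \lambdahat_\mf$ lies in the (at most $\D_\mftilde + \D_\mf$-dimensional) span of the two models, and both are submodels of the nested collection, one decomposes $\nu_n(\lambdahat_\mftilde - \lambdahat_\mf)$ via Cauchy--Schwarz as
\begin{equation*}
	2\nu_n(\lambdahat_\mftilde - \lambdahat_\mf) \le \tfrac{1}{4}\Vert \lambdahat_\mftilde - \lambdahat_\mf\Vert^2 + 4\,\sup_{f\in \Sc_{\mf\vee\mftilde},\, \Vert f\Vert\le 1} \nu_n(f)^2 \le \tfrac{1}{2}\Vert\lambdahat_\mftilde-\lambda\Vert^2 + \tfrac{1}{2}\Vert\lambdahat_\mf-\lambda\Vert^2 + 4\, Z_{\mf\vee\mftilde}^2,
\end{equation*}
where $Z_{\mf'}^2 := \sup_{f\in\Sc_{\mf'},\Vert f\Vert\le 1}\nu_n(f)^2$ and $\mf\vee\mftilde$ denotes the larger of the two nested models. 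Absorbing $\frac12\Vert\lambdahat_\mftilde-\lambda\Vert^2$ into the left-hand side, it remains to show that $\E\bigl[\,(4Z_{\mf\vee\mftilde}^2 - \pen(\mftilde))_+\,\bigr] \lesssim n^{-1}$, uniformly, and this is where the concentration inequality enters. One replaces $\pen(\mftilde)$ by $\sum_{\mf'\in\Mc_n}(4Z_{\mf'}^2 - \pen(\mf'))_+$ (a crude but standard union-over-models bound, affordable because $\vert\Mc_n\vert\le c_\Mc n$), and then for each fixed $\mf'$ applies Lemma~\ref{lem:conc:cox} to the supremum of the empirical process $\nu_n$ over the unit ball of $\Sc_{\mf'}$. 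The key quantities feeding that Talagrand-type bound are: $\E[Z_{\mf'}] \lesssim \sqrt{\xi^2 \Phi^2\D_{\mf'}/n}$ (bounding $\E\sum_\eta\nu_n(\phi_\eta)^2$ by $\frac1n\sum_\eta \E[Y_1^2\phi_\eta(X_1)^2] \le \frac1n \Vert\lambda\Vert_\infty(1+\Vert\lambda\Vert_\infty)\Vert\sum_\eta\phi_\eta^2\Vert_\infty \lesssim \mu\Phi^2\D_{\mf'}/n$ using the remark after Assumption~\ref{ass:model}); the "weak variance" $v \lesssim \xi\,\Phi^2$; and the "sup-norm" parameter $b \lesssim \Phi^2\D_{\mf'}$ coming from $\Vert\sum_\eta\phi_\eta^2\Vert_\infty \le \Phi^2\D_{\mf'}$ together with the bound on a single $Y_i$. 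The resulting exponential tail has both a sub-Gaussian part (giving the $\mu\Phi^2\D_{\mf'}/n$ contribution, matching the first term of $\pen$) and a sub-gamma/Poisson part whose deviation term scales like $\Phi^2\D_{\mf'}\log(n+2)/n$ — this is exactly why the logarithmic factor appears in~\eqref{eq:pen:xi:known}. With the constants $24\mu$ and $400\mu$ chosen large enough, each summand $\E[(4Z_{\mf'}^2 - \pen(\mf'))_+]$ is bounded by $C\mu\Phi^2\D_{\mf'}e^{-K\D_{\mf'}} \cdot \frac1n$ plus $C/n^2$-type terms; summing over $\mf'\in\Mc_n$ (using $\D_{\mf'}\ge 1$ and $\vert\Mc_n\vert\lesssim n$) gives the claimed $O(1/n)$ remainder.

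Putting the pieces together: taking expectations, using the bound on $\E\Vert\lambdahat_\mf-\lambda\Vert^2$ and on the residual term, and finally optimizing over $\mf\in\Mc_n$ gives
\begin{equation*}
	\E\Vert\lambdahat_\mftilde - \lambda\Vert^2 \lesssim \min_{\mf\in\Mc_n}\bigl\{\Vert\lambda-\lambda_\mf\Vert^2 + \pen(\mf)\bigr\} + \frac1n \lesssim \min_{\mf\in\Mc_n}\max\{\Vert\lambda-\lambda_\mf\Vert^2,\pen(\mf)\} + \frac1n,
\end{equation*}
the last step merely because $a+b \le 2\max\{a,b\}$. I expect the main obstacle to be the careful bookkeeping in the concentration step: one must verify that the three parameters $(\E[Z_{\mf'}], v, b)$ fed into Lemma~\ref{lem:conc:cox} are correct given that the $Y_i$ are Poisson rather than bounded, handle the conditioning on $X_1,\dots,X_n$ (the $Y_i$ are only \emph{conditionally} independent Poisson), and track the numerical constants tightly enough that $24\mu$ and $400\mu$ actually dominate — in particular the interplay between the $\frac14\Vert\cdot\Vert^2$ absorption and the multiplicative constant in front of $Z^2$ must be reconciled with the factor $24$ rather than a smaller one. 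The nestedness in Assumption~\ref{ass:model:adaptive} is what makes $\Sc_{\mf\vee\mftilde}$ well-defined and keeps the chaining to a single supremum per model rather than over all pairs; the cardinality bound $\vert\Mc_n\vert\le c_\Mc n$ is what makes the union bound affordable without an extra $\log n$ beyond the one already built into the penalty.
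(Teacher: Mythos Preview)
Your skeleton is essentially the right one, but there is a genuine gap at the concentration step that the paper resolves by an additional decomposition you do not perform.

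You propose to apply Lemma~\ref{lem:conc:cox} directly to the full empirical process $\nu_n(f)=\frac{1}{n}\sum_{i=1}^n\bigl(Y_if(X_i)-\E[Y_if(X_i)]\bigr)$. This does not fit the lemma: Lemma~\ref{lem:conc:cox} is a \emph{conditional} statement, centering at the random intensity, i.e.\ it controls $\frac{1}{n}\sum_i\bigl(Y_if(X_i)-\lambda(X_i)f(X_i)\bigr)$ given $\boldsymbol X$. Nor can you use the classical Talagrand consequence (Lemma~\ref{PR:lem:ex:talagrand}), because it requires uniformly bounded summands and $Y_if(X_i)$ is unbounded. You flag the conditioning as a potential obstacle; it is in fact the crux.

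The paper's remedy is to split $\lambdahat_n-\lambda_n=\Thetahat_n+\Thetatilde_n$ with $\Thetahat_n=\lambdahat_n-\lambdatilde_n$ (the ``Poisson part'', coefficients $\frac{1}{n}\sum_i(Y_i-\lambda(X_i))\phi_\eta(X_i)$) and $\Thetatilde_n=\lambdatilde_n-\lambda_n$ (the ``design part'', coefficients $\frac{1}{n}\sum_i(\lambda(X_i)\phi_\eta(X_i)-\theta_\eta)$). Conditionally on $\boldsymbol X$, $\Thetahat_n$ is exactly of the form covered by Lemma~\ref{lem:conc:cox}; this yields Lemma~\ref{conc:Theta:hat} and produces the term $50\mu\,\Phi^2\D_{\mf'}\log(n+2)/n$ (hence the $400\mu$ piece of the penalty after the factor~$16$ and the $\D_{\mf\vee\mftilde}\le\D_\mf+\D_\mftilde$ splitting). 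The design part $\Thetatilde_n$ involves only bounded functions of the i.i.d.\ $X_i$ (since $\Vert\lambda\Vert_\infty\le\xi$), so the standard Talagrand consequence Lemma~\ref{PR:lem:ex:talagrand} applies and gives Lemma~\ref{conc:Theta:tilde} with the non-logarithmic term $3\mu\,\Phi^2\D_{\mf'}/n$ (hence the $24\mu$ piece of the penalty). The two summands in~\eqref{eq:pen:xi:known} are thus not a sub-Gaussian/sub-gamma dichotomy within a single inequality, but correspond to two separate concentration arguments for two separate processes.

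A minor point: the paper compares $\Upsilon_n(\lambdahat_\mftilde)$ to $\Upsilon_n(\lambda_\mf)$ (the deterministic projection), not to $\Upsilon_n(\lambdahat_\mf)$ as you do; this avoids invoking Proposition~\ref{prop:ind:upper} and keeps the bias term deterministic from the outset. Otherwise your handling of the $\tfrac14\Vert\cdot\Vert^2$ absorption, the union over $\mf'\in\Mc_n$, and the final $a+b\le 2\max\{a,b\}$ step matches the paper.
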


\begin{example}[Continuation of Example~\ref{example:sobolev}]
	Setting $\Mc_n = \{ 0,\ldots,n \}$ and defining the spaces $\Sc_\mf$ for $\mf \in \Mc_n$ exactly as in Example~\ref{example:sobolev}, the penalty function reads
	\begin{equation*}
		\pen(\mf) = 24 \mu \cdot \frac{2\mf +1}{n} + 400 \mu \cdot (2\mf + 1) \cdot \frac{\log(n+2)}{n}.
	\end{equation*}
	Up to the additional logarithmic factor in the second summand, the penalty term behaves exactly as the variance.
	Hence, the adaptive estimate attains the optimal rate up to this extra logarithmic factor.
\end{example}

\begin{remark}
	A careful inspection of the proof of Theorem~\ref{thm:adaptive:xi:known} shows that the established bound holds uniformly over sets $\Theta_\gamma^R \cap \{ \Vert \lambda \Vert_\infty \leq \xi \}$ for $\xi > 0$.
	In the framework of Example~\ref{example:sobolev} the rate is then deteriorated by an additional logarithmic factor in the case of polynomially increasing $\gamma$ and optimal in the case of exponentially increasing $\gamma$.
\end{remark}

\subsubsection{Unknown upper bound of the regression function}\label{subsubsec:unknown:xi}

We now propose an adaptive estimator of the regression function $\lambda$ that does not depend on \emph{a priori} knowledge of an upper bound for $\Vert \lambda \Vert_\infty$, and is thus fully data-driven.
Of course, the key idea is to replace the quantity $\xi$ from Assumption~\ref{ass:xi:known} appearing in the definition of the penalty~\eqref{eq:pen:xi:known} by an appropriate estimator of $\Vert \lambda \Vert_\infty$.
For the construction of the estimator of $\Vert \lambda \Vert_\infty$, we take inspiration from an approach that was used in \cite{comte2001adaptive} in the context of adaptive estimation of the spectral density from a stationary Gaussian sequence.
More precisely, the estimator of $\Vert \lambda \Vert_\infty$ is obtained as the plug-in estimator $\Vert \lambdahat_\Pi \Vert_\infty$ where $\lambdahat_\Pi$ is a suitable histogram estimator of $\lambda$ based on some partition $\Pi = \{ \X_1,\ldots,\X_M \}$ of the space $\X$ in mutually disjoint measurable sets $\X_j$, $j=1,\ldots,M$ with $\X = \bigcup_{j=1}^M \X_j$.
More precisely, $\lambdahat_\Pi$ is defined as $\lambdahat_\Pi =\sum_{j=1}^{M} \pi_j \1_{\X_j}$ where $\pi_j = \frac{1}{n\sqrt{\P (\X_j)}} \sum_{i=1}^n Y_i\1_{\{X_i \in \X_j\}}$.
Obviously, $\lambdahat_\Pi$ is the projection estimator on the space $\Sc_\Pi$ generated by the orthonormal basis functions $\frac{1}{\sqrt{\P (X_j)}}\1_{\X_j}$, $j=1,\ldots,m$ which has dimension $\D_\Pi=\vert \Pi \vert = M$.

We substitute the quantity $\xi$ in the definition of the penalty term defined in~\eqref{eq:pen:xi:known} with $\Vert \lambdahat_\Pi \Vert_\infty$.
More precise assumptions on the partition $\Pi$ will be stated in Theorem~\ref{thm:adaptive:xi:unknown} below.
Further, by adapting the numerical constants in the definition of the penalty (which is necessary for our proof), we replace the deterministic penalty term used under the validity of Assumption~\ref{ass:xi:known} by the random penalty
\begin{equation*}
	\penhat(\mf) = 384  \muhat \cdot \frac{\Phi^2\D_\mf}{n} + 6400 \muhat \cdot \Phi^2\D_\mf \cdot \frac{\log(n+2)}{n}
\end{equation*}
where $\muhat = 1 \vee \Vert \lambdahat_\Pi \Vert_\infty^2$.
Keeping the contrast function $\Upsilon_n$ as defined in~\eqref{eq:def:contrast} we finally put
\begin{equation*}
	\mfhat = \argmin_{\mf \in \Mc_n} \{\Upsilon_n(\lambdahat_\mf) + \penhat(\mf)\}.
\end{equation*}

\begin{theorem}\label{thm:adaptive:xi:unknown}
	For every $n \in \N$, let $\Mc_n$ be a collection of models such that Assumption~\ref{ass:model} is satisfied for all $\mf \in \Mc_n$.
	Further assume that the collection of models satisfies Assumption~\ref{ass:model:adaptive} and that the following conditions hold:
	\begin{enumerate}[label=($\Pi$\arabic*), leftmargin=1.5cm, itemsep=0em]
		\item\label{ass:Pi1} $\Vert \lambda - \lambda_\Pi \Vert_\infty \leq \frac{1}{4} \Vert \lambda \Vert_\infty$ where $\lambda_\Pi$ denotes the projection of $\lambda$ on $\Sc_\Pi$, and
		\item\label{ass:Pi2} the partition $\Pi = \{ \X_1,\ldots,\X_M \}$ in the definition of the auxiliary estimator $\lambdahat_\Pi$ satisfies $\P (\X_j) \geq c_\Pi/M$ for some constant $c_\Pi > 0$ and
		\begin{equation*}
			M \leq \frac{c_\Pi n}{320\log n}.
		\end{equation*}
	\end{enumerate}
	Then
	\begin{equation*}
		\E \Vert \widehat \lambda_\mfhat - \lambda \Vert^2 \lesssim \min_{\mf \in \Mc_n} \max \{ \Vert \lambda - \lambda_\mf \Vert^2, \pen(\mf) \} + \frac{1}{n}
	\end{equation*}
	where $\pen(\mf) = 24 \mu \cdot \frac{\Phi^2\D_\mf}{n} + 400 \mu \cdot \Phi^2\D_\mf \cdot \frac{\log(n+2)}{n}$ and $\mu = {1 \vee \Vert \lambda \Vert_\infty^2}$.
\end{theorem}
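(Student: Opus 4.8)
The plan is to derive the statement from Theorem~\ref{thm:adaptive:xi:known}, applied with the existent (though unknown) bound $\xi \defeq \Vert\lambda\Vert_\infty$, by controlling the random penalty $\penhat$. The crucial observation is that the numerical constants in $\penhat$ are exactly those of $\pen$ multiplied by $16$, so that $\muhat \geq \tfrac1{16}\mu$ already forces $\penhat(\mf) \geq \pen(\mf)$ for all $\mf \in \Mc_n$, while any bound $\muhat \leq c\,\mu$ gives $\penhat(\mf) \leq 16c\,\pen(\mf)$. Concretely, I would work on the event
\begin{equation*}
	\Omega^\star \defeq \bigl\{ \Vert \lambdahat_\Pi - \lambda_\Pi \Vert_\infty \leq \tfrac14 \bigl(1\vee \Vert\lambda\Vert_\infty\bigr) \bigr\},
\end{equation*}
on which, by assumption~\ref{ass:Pi1} and the triangle inequality, $\tfrac14\mu \leq \muhat \leq \tfrac94\mu$ and hence
\begin{equation*}
	\pen(\mf) \leq \penhat(\mf) \leq 36\,\pen(\mf) \qquad \text{for all } \mf \in \Mc_n .
\end{equation*}

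First I would bound the contribution of $\Omega^\star$. On this event the selection of $\mfhat$ can be analysed exactly as in the proof of Theorem~\ref{thm:adaptive:xi:known}: from $\Upsilon_n(\lambdahat_\mfhat)+\penhat(\mfhat) \leq \Upsilon_n(\lambdahat_\mf)+\penhat(\mf)$, the identity $\langle \lambdahat_n - \lambda, h\rangle = \nu_n(h)$ for $h \in \Sc_n$ (with $\nu_n(h) = \sum_{\eta\in\Ic_n}(\thetahat_\eta - \theta_\eta)h_\eta$ the associated centred linear form) and the splitting $2ab \leq \tfrac18 a^2 + 8b^2$, one reaches an inequality of the form
\begin{equation*}
	\Vert \lambdahat_\mfhat - \lambda\Vert^2 \leq 3\Vert \lambdahat_\mf - \lambda\Vert^2 + 8\bigl(\mathcal Z_{\mf\vee\mfhat}^2 - p(\mf\vee\mfhat)\bigr)_+ + 8p(\mf) + 8p(\mfhat) + 2\penhat(\mf) - 2\penhat(\mfhat),
\end{equation*}
where $\mf \vee \mf'$ denotes the larger of two nested models, $\mathcal Z_{\mf'} = \sup\{\,\vert\nu_n(h)\vert : h\in\Sc_{\mf'},\ \Vert h\Vert = 1\,\}$, and $p(\mf') = c_0\,\mu\,\Phi^2\D_{\mf'}(1+\log(n+2))/n$ for a small numerical constant $c_0$ (this is the deviation threshold furnished by the Talagrand-type inequality of Lemma~\ref{lem:conc:cox}, and it is governed by the true $\mu$, not by $\muhat$). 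On $\Omega^\star$ the bound $\penhat(\mfhat) \geq \pen(\mfhat)$ is what absorbs $8p(\mfhat)$, so that the last three terms are at most $2\penhat(\mf) \leq 72\,\pen(\mf)$; taking expectations, bounding $\E\Vert\lambdahat_\mf - \lambda\Vert^2$ via Proposition~\ref{prop:ind:upper} together with $\Vert\lambda\Vert^2 + \Vert\lambda\Vert_1 \lesssim \mu$, and bounding $\E\sum_{\mf'\in\Mc_n}(\mathcal Z_{\mf'}^2 - p(\mf'))_+ \lesssim 1/n$ by Lemma~\ref{lem:conc:cox} and $\vert\Mc_n\vert \leq c_\Mc n$ (verbatim as in Theorem~\ref{thm:adaptive:xi:known}), I would obtain $\E[\Vert\lambdahat_\mfhat - \lambda\Vert^2\1_{\Omega^\star}] \lesssim \Vert\lambda - \lambda_\mf\Vert^2 + \pen(\mf) + 1/n$ for every $\mf\in\Mc_n$, hence the announced bound restricted to $\Omega^\star$.

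It then remains to handle $\E[\Vert\lambdahat_\mfhat-\lambda\Vert^2\1_{(\Omega^\star)^c}]$. By nestedness $\Vert\lambdahat_\mfhat\Vert \leq \Vert\lambdahat_n\Vert$, and since $\D_n \leq c_\mf n$ and $\Vert\phi_\eta\Vert_\infty \leq \Phi\sqrt{\D_n}$ one has $\Vert\lambdahat_n\Vert^2 \lesssim \bigl(\sum_{i=1}^n Y_i\bigr)^2$, whence $\E\Vert\lambdahat_\mfhat-\lambda\Vert^4 \lesssim n^{K}$ for some fixed $K$ (using the Poisson moments of $\sum_{i=1}^n Y_i$). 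A Poisson deviation inequality applied to the sums $\sum_{i:X_i\in\X_j} Y_i$ underlying $\lambdahat_\Pi$, together with a union bound over $j = 1,\ldots,M$ and assumption~\ref{ass:Pi2} — the constraint $M \leq c_\Pi n/(320\log n)$ combined with $\P(\X_j) \geq c_\Pi/M$ forces $n\P(\X_j) \gtrsim \log n$, so that each of the $M$ deviation probabilities is bounded by a large negative power of $n$ — yields $\P((\Omega^\star)^c) \lesssim n^{-a}$ with $a$ as large as we wish (the constant $320$ in~\ref{ass:Pi2} being chosen for exactly this purpose). Cauchy--Schwarz then gives $\E[\Vert\lambdahat_\mfhat-\lambda\Vert^2\1_{(\Omega^\star)^c}] \leq (\E\Vert\lambdahat_\mfhat-\lambda\Vert^4)^{1/2}\,\P((\Omega^\star)^c)^{1/2} \lesssim 1/n$, and combining the two regimes proves the theorem.

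The hardest part will be the analysis on $\Omega^\star$: one has to push the entire model-selection argument through with the \emph{random} penalty $\penhat$, and in particular to ensure that the deviation threshold $p(\cdot)$ coming from the Poisson concentration inequality — which is governed by the deterministic $\mu = 1\vee\Vert\lambda\Vert_\infty^2$ rather than by $\muhat$ — is still absorbed by $\penhat(\mfhat)$; this is exactly why the constants defining $\penhat$ had to be taken $16$ times those of $\pen$. A subsidiary technical issue is to tune the numerical constants in the deviation inequality for $\lambdahat_\Pi$ so that $\P((\Omega^\star)^c)$ decays faster than the fixed power of $n$ needed to compensate the polynomial growth of $\E\Vert\lambdahat_n\Vert^4$, and this is where the logarithmic factor in assumption~\ref{ass:Pi2} is used.
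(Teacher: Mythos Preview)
Your strategy is the same as the paper's: split on a ``good'' event on which $\muhat$ and $\mu$ are comparable, re-run the argument of Theorem~\ref{thm:adaptive:xi:known} there, and kill the complement by Cauchy--Schwarz together with Bernstein-type bounds on the histogram estimator. The paper's event is $\Xi=\{\,|\,(\Vert\lambdahat_\Pi\Vert_\infty\vee1)/(\Vert\lambda\Vert_\infty\vee1)-1\,|<3/4\,\}$, which it reduces (via~\ref{ass:Pi1}) to $\{\Vert\lambdahat_\Pi-\lambda_\Pi\Vert_\infty<\tfrac12(\Vert\lambda\Vert_\infty\vee1)\}$; your $\Omega^\star$ is a variant of this. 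On the good event the paper obtains $\mu\leq16\muhat$ and $\muhat\leq(7/4)^2\mu$, matching your $\tfrac14\mu\leq\muhat\leq\tfrac94\mu$ up to constants, and the residual $+100\,\pen(\mf)$ in the paper corresponds to your $+72\,\pen(\mf)$.

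Two quantitative points need tightening. First, under the stated constant $320$ in~\ref{ass:Pi2} the Bernstein/Poisson tail bounds yield $\P((\Omega^\star)^\complement)\lesssim n^{-4}$, not $n^{-a}$ for arbitrary $a$: one gets $4M\exp(-n\underbar p/64)$ with $\underbar p\geq c_\Pi/M$, and $M\leq c_\Pi n/(320\log n)$ gives exactly $n^{-5}\cdot M\lesssim n^{-4}$. Second, with only $n^{-4}$ available, your crude bound $\Vert\lambdahat_n\Vert^2\lesssim(\sum_iY_i)^2$ is too loose, since $\E(\sum_iY_i)^4\asymp n^4$ and Cauchy--Schwarz then returns $O(1)$ rather than $O(1/n)$. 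The paper instead uses $\Vert\lambdahat_{\mfhat}\Vert^2\leq\frac{\Phi^2\D_{\mfhat}}{n}\sum_iY_i^2\leq c_\mf\Phi^2\sum_iY_i^2$, so that $\E[(\sum_iY_i^2)^2]\leq n^2T_4(\Vert\lambda\Vert_\infty)$ and Cauchy--Schwarz yields $n\cdot n^{-2}=n^{-1}$. This is an easy fix, but as written your two bounds do not close.

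A minor remark on the good-event analysis: the paper (following the proof of Theorem~\ref{thm:adaptive:xi:known}) splits the centred process into $\widehat\Theta_n=\lambdahat_n-\lambdatilde_n$ and $\widetilde\Theta_n=\lambdatilde_n-\lambda_n$ and controls them with \emph{two} different concentration results (Lemma~\ref{conc:Theta:hat} via Lemma~\ref{lem:conc:cox} for the Poisson part, and Lemma~\ref{conc:Theta:tilde} via Lemma~\ref{PR:lem:ex:talagrand} for the design part). Your single $\mathcal Z_{\mf'}$ with one reference to Lemma~\ref{lem:conc:cox} hides this split; when you say ``verbatim as in Theorem~\ref{thm:adaptive:xi:known}'' you should be aware that both lemmata are needed.
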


The additional Assumptions~\ref{ass:Pi1} and \ref{ass:Pi2} are inspired by similar assumptions made in Theorem~2 of~\cite{comte2001adaptive}.
Note that Assumption~\ref{ass:Pi2} is especially satisfied in the case that $\X$ is some compact subset of some $\R^d$ and $\P$ admits a density with respect to the Lebesgue measure that is bounded from below by some strictly positive constant.
In this case, one can take an arbitrary partition of $\X$ into $M$ sets $\X_1,\ldots,\X_M$ of equal Lebesgue measure.

\section{Dependent observations}\label{sec:dependent}

\subsection{Upper bound}

The following proposition provides an upper bound on the risk in the weakly dependent case.

\begin{proposition}\label{prop:dep:upper}
	For a model~$\mf$ satisfying Assumption~\ref{ass:model} let $\lambdahat_\mf$ be the corresponding projection estimator. Then
	\begin{equation*}
		\E \Vert \lambdahat_\mf - \lambda \Vert^2 \leq \Vert \lambda_\mf - \lambda \Vert^2 + \frac{\Phi^2 \D_\mf}{n} \left[ \Vert \lambda \Vert_1 + 4 \Vert \lambda \Vert_\infty^2 \left( \sum_{k=0}^{n} \beta_k \right) \right].
	\end{equation*} 
\end{proposition}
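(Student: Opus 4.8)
The plan is to follow the bias–variance analysis of the independent case (Proposition~\ref{prop:ind:upper}); the only genuinely new ingredient, and the point at which the $\beta$-mixing coefficients enter, is the control of the fluctuations coming purely from the design, for which I would invoke the Viennet-type variance inequality of Lemma~\ref{lem:var:bound:beta}. I would start from the usual decomposition
\[
	\E\Vert\lambdahat_\mf-\lambda\Vert^2=\Vert\lambda_\mf-\lambda\Vert^2+\sum_{\eta\in\Ic_\mf}\E\bigl[(\thetahat_\eta-\theta_\eta)^2\bigr],
\]
so that everything reduces to bounding the summed variances of the estimated coefficients, and then split the centred coefficient into a design term and a Poisson-noise term,
\[
	\thetahat_\eta-\theta_\eta=\underbrace{\tfrac1n\sum_{i=1}^n\bigl(\lambda(X_i)\phi_\eta(X_i)-\theta_\eta\bigr)}_{A_\eta}+\underbrace{\tfrac1n\sum_{i=1}^n\bigl(Y_i-\lambda(X_i)\bigr)\phi_\eta(X_i)}_{B_\eta}.
\]
Conditioning on $X_1,\dots,X_n$ and using that, given the design, the $Y_i$ are independent with $Y_i\sim\mathrm{Poisson}(\lambda(X_i))$, one gets $\E[B_\eta\mid X_1,\dots,X_n]=0$; since $A_\eta$ is a function of the design, this gives $\E[A_\eta]=\E[B_\eta]=0$ and $\E[A_\eta B_\eta]=0$, hence $\E[(\thetahat_\eta-\theta_\eta)^2]=\E[A_\eta^2]+\E[B_\eta^2]$, and the two terms can be treated separately.

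For the noise term I would again condition on $X_1,\dots,X_n$: the off-diagonal contributions vanish by conditional independence and zero conditional mean, and the diagonal ones equal $\E[(Y_i-\lambda(X_i))^2\mid X_i]=\lambda(X_i)$ by the Poisson variance identity, so that by strict stationarity $\E[B_\eta^2]=\tfrac1n\int\lambda\phi_\eta^2\,d\P$. Summing over $\eta$ and using the characterisation of Assumption~\ref{ass:model} recorded in the Remark following it, namely $\Vert\sum_{\eta\in\Ic_\mf}\phi_\eta^2\Vert_\infty\le\Phi^2\D_\mf$, together with $\lambda\ge0$ (so $\int\lambda\,d\P=\Vert\lambda\Vert_1$), yields $\sum_{\eta}\E[B_\eta^2]\le\tfrac{\Phi^2\D_\mf}{n}\Vert\lambda\Vert_1$, which is the first term of the asserted bound. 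This part is identical to the independent case.

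For the design term, $A_\eta=\tfrac1n\sum_{i=1}^n h_\eta(X_i)$ with $h_\eta=\lambda\phi_\eta$ has mean zero, so $\E[A_\eta^2]$ is the variance of a normalised additive functional of the stationary $\beta$-mixing sequence $(X_i)$; here I would apply Lemma~\ref{lem:var:bound:beta}, which bounds it by $\tfrac4n\int h_\eta^2\,\mathfrak b\,d\P$ for a nonnegative function $\mathfrak b$ depending only on the law of the process and satisfying $\int\mathfrak b\,d\P\le\sum_{k=0}^{n}\beta_k$. Estimating $h_\eta^2=\lambda^2\phi_\eta^2\le\Vert\lambda\Vert_\infty^2\,\phi_\eta^2$ \emph{before} summing over $\eta$, and then pulling out $\Vert\sum_{\eta\in\Ic_\mf}\phi_\eta^2\Vert_\infty\le\Phi^2\D_\mf$, gives $\sum_{\eta}\E[A_\eta^2]\le\tfrac{4\Phi^2\D_\mf}{n}\Vert\lambda\Vert_\infty^2\sum_{k=0}^{n}\beta_k$, the second term. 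Adding the two variance estimates to the bias term $\Vert\lambda_\mf-\lambda\Vert^2$ finishes the proof.

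The step I expect to be the main obstacle is the correct use of Lemma~\ref{lem:var:bound:beta}: one needs it in a form valid for an arbitrary (not necessarily centred or bounded) summand $h_\eta$, so that the single process-dependent weight $\mathfrak b$ can be factored out of the sum over $\eta$ only after the crude pointwise bound $\lambda^2\le\Vert\lambda\Vert_\infty^2$, and one must match its numerical constant and the truncation of the mixing series to $\sum_{k=0}^{n}\beta_k$ exactly; once that lemma is in place the remaining conditioning arguments are a verbatim repetition of those behind Proposition~\ref{prop:ind:upper}. It is worth emphasising that the $A_\eta/B_\eta$ split is precisely what localises the dependence: the noise term behaves exactly as in the independent model, and only $A_\eta$ feels the mixing.
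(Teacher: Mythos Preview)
Your proposal is correct and essentially identical to the paper's proof: the split $\thetahat_\eta-\theta_\eta=A_\eta+B_\eta$ with $\E[A_\eta B_\eta]=0$ is nothing but the law of total variance $\Var(\thetahat_\eta)=\Var(\E[\thetahat_\eta\mid\Xb])+\E[\Var(\thetahat_\eta\mid\Xb)]$, which is exactly how the paper organises the argument. The noise term is handled by conditional independence and the Poisson variance identity, the design term by Lemma~\ref{lem:var:bound:beta} followed by the pointwise bound $\lambda^2\le\Vert\lambda\Vert_\infty^2$, the basis bound $\sum_\eta\phi_\eta^2\le\Phi^2\D_\mf$, and $\E\mathfrak b=\sum_{k=0}^n\beta_k$---all matching the paper's steps and constants.
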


Note that the numerical constant is uniformly bounded in $n$ under the assumption that $\sum_{k=0}^{\infty} \beta_k < \infty$.

\begin{remark}
	Under the assumption that $\sum_{k=0}^{\infty} \beta_k < \infty$ the bound in the weakly dependent case coincides (apart from the numerical constants appearing) with the one in the independent case.
	Moreover, since the weakly dependent case incorporates the independent one, the lower bound given in Theorem~\ref{thm:lower} provides also the benchmark for the weakly dependent case.
\end{remark}

\subsection{Adaptive estimation}\label{subs:dep:adaptive}

We now consider adaptive estimation in case of weak dependence.
We stick directly to the case that Assumption~\ref{ass:xi:known} is not satisfied.
We keep the contrast function from Section~\ref{sec:independent} but define the penalty as
\begin{equation*}
	\penhat(\mf) = \D_\mf \cdot \frac{\log(n+2)}{n} + 6400 \muhat \cdot \Phi^2\D_\mf \cdot \frac{\log(n+2)}{n}.
\end{equation*}

In addition we have to impose the following assumption on the $\beta$-mixing coefficients which is similar to Assumption~A4 in~\cite{lacour2008adaptive}.

\begin{assumption}\label{ass:ex:lacour} 
	The process $(X_i)_{i  \in \Z}$ is geometrically $\beta$-mixing ($\beta_q \lesssim e^{-\theta q}$) or arithmetically $\beta$-mixing ($\beta_q \lesssim Mq^{-\theta}$) with $\theta \geq 9$ in the latter case.
\end{assumption}

Note that under Assumption~\ref{ass:ex:lacour} the condition $\sum_{k=0}^{\infty} \beta_k < \infty$ is satisfied.
Examples of processes satisfying Assumption~\ref{ass:ex:lacour} are given in~\cite{lacour2008adaptive} and include autoregressive processes of order $1$.
We have the following theorem.

\begin{theorem}\label{thm:adaptive:dep}
	Let the assumptions of Theorem~\ref{thm:adaptive:xi:unknown} be satisfied
	with the condition on $M$ replaced with
	\begin{equation*}
		M \leq \frac{c_\Pi n^{1/3}}{320 \log n}.
	\end{equation*}
	Further assume that Assumption~\ref{ass:ex:lacour} holds.
	Then, for every $n \in \N$,
	\begin{equation*}
		\E \Vert \lambdahat_\mfhat - \lambda \Vert^2 \lesssim \inf_{\mf \in \Mc_n} \max \{ \Vert \lambda - \lambda_\mf \Vert^2 , \pen(\mf) \} + \frac{1}{n}
	\end{equation*}
	where $\pen(\mf) = \D_\mf \cdot \frac{\log(n+2)}{n}$.
\end{theorem}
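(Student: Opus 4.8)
The plan is to follow the scheme used for Theorem~\ref{thm:adaptive:xi:unknown}, replacing the Poisson concentration inequalities valid for independent data by the block-coupling device of~\cite{viennet1997inequalities} at every point where independence is used. Write $\Upsilon_n(f) = \Vert f - \lambda\Vert^2 - \Vert\lambda\Vert^2 - 2\nu_n(f)$ with the centred linear empirical process $\nu_n(f) = \langle\lambdahat_n - \lambda, f\rangle = \frac1n\sum_{i=1}^n\{Y_i f(X_i) - \E[Y_i f(X_i)]\}$ for $f\in\Sc_n$, which is legitimate because $\E[Y_i\mid X_i]=\lambda(X_i)$. The minimality of $\mfhat$ gives, for every fixed $\mf\in\Mc_n$, the basic inequality
\begin{equation*}
\Vert\lambdahat_\mfhat - \lambda\Vert^2 \le \Vert\lambdahat_\mf - \lambda\Vert^2 + 2\,\nu_n(\lambdahat_\mfhat - \lambdahat_\mf) + \penhat(\mf) - \penhat(\mfhat).
\end{equation*}
Since $\Mc_n$ is nested, $\lambdahat_\mfhat - \lambdahat_\mf$ lies in the larger of the spaces $\Sc_\mf,\Sc_\mfhat$, so $|\nu_n(\lambdahat_\mfhat - \lambdahat_\mf)| \le \Vert\lambdahat_\mfhat - \lambdahat_\mf\Vert\,\chi_n(\mf,\mfhat)$ with $\chi_n(\mf,\mf')\defeq\sup\{\nu_n(f): f\in\Sc_\mf\cup\Sc_{\mf'},\ \Vert f\Vert\le1\}$; splitting the product by $2ab\le\tfrac14a^2+4b^2$, applying the triangle inequality to $\Vert\lambdahat_\mfhat - \lambdahat_\mf\Vert$, and keeping $-\penhat(\mfhat)$ to absorb the part indexed by $\mfhat$, the problem reduces to controlling $\chi_n(\mf,\mf')^2$ uniformly in $\mf'\in\Mc_n$ by a fraction of the penalty, up to a remainder of order $1/n$.

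To handle the random penalty I would work on the event $\Omega_\Pi=\{\Vert\lambdahat_\Pi-\lambda_\Pi\Vert_\infty\le\tfrac14\Vert\lambda\Vert_\infty\}$. On $\Omega_\Pi$, Assumption~\ref{ass:Pi1} makes $\Vert\lambdahat_\Pi\Vert_\infty$ comparable to $\Vert\lambda\Vert_\infty$, hence $\muhat$ comparable to $\mu=1\vee\Vert\lambda\Vert_\infty^2$ and $\penhat(\mf)$ comparable, up to absolute constants, to $(1+\mu\Phi^2)\D_\mf\log(n+2)/n$, which is of the order of $\pen(\mf)=\D_\mf\log(n+2)/n$. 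The dependence enters only through the control of $\chi_n$: since a direct Talagrand/Poisson bound is unavailable for $\beta$-mixing covariates, I would invoke Viennet's construction, partitioning $\{1,\ldots,n\}$ into $2q_n$ alternating blocks of length $p_n$ and building a coupled sample $X_1^\ast,\ldots,X_n^\ast$ whose odd blocks (resp.\ even blocks) are mutually independent and which coincides with $X_1,\ldots,X_n$ with probability at least $1-(n/p_n)\beta_{p_n}$. Conditionally on the covariates the responses stay Poisson, so Lemma~\ref{lem:conc:cox} applies to each family of independent blocks; combining the two families, using the form $\Vert\sum_{\eta\in\Ic_{\mf'}}\phi_\eta^2\Vert_\infty\le\Phi^2\D_{\mf'}$ of Assumption~\ref{ass:model}, and choosing $p_n$ large enough that $(n/p_n)\beta_{p_n}$ is negligible under Assumption~\ref{ass:ex:lacour}, one obtains $\E[(\chi_n(\mf,\mf')^2-\kappa\,(1+\mu\Phi^2)\,\D_{\mf'}\,\log(n+2)/n)_+]\lesssim 1/n$ for a numerical constant $\kappa$, uniformly in $\mf'$. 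Here the first summand $\D_\mf\log(n+2)/n$ of $\penhat$ absorbs the sub-gamma part of the deviation (the extra logarithm being the price of Poisson, rather than sub-Gaussian, tails), the second summand $6400\muhat\Phi^2\D_\mf\log(n+2)/n$ absorbs the variance part, and summing the $(\cdot)_+$-bounds over the at most $c_{\Mc}n$ models keeps the total of order $1/n$.

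On $\Omega_\Pi^c$ I would argue crudely. A Bernstein-type inequality for $\beta$-mixing sums, itself obtained by blocking, applied to each of the $M$ cellwise sums $\frac1n\sum_{i=1}^n\{Y_i\1_{\{X_i\in\X_j\}}-\E[Y_i\1_{\{X_i\in\X_j\}}]\}$, together with $\P(\X_j)\ge c_\Pi/M$ and a union bound over the $M$ cells, yields that the probability of $\Omega_\Pi^c$ is $\lesssim n^{-1}$; it is precisely the blocking, whose effective sample size is of order $n^{1/3}$ under Assumption~\ref{ass:ex:lacour}, that forces the strengthened hypothesis $M\le c_\Pi n^{1/3}/(320\log n)$, with the constants matching those in the exponent. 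On $\Omega_\Pi^c$ one has the rough bound $\Vert\lambdahat_\mfhat\Vert^2\le\sum_{\eta\in\Ic_n}\thetahat_\eta^2$, whose moments are polynomially bounded in $n$ for conditionally Poisson responses (using $\D_\mf\le c_\mf n$), so Cauchy--Schwarz with the smallness of $\Omega_\Pi^c$ gives $\E[\Vert\lambdahat_\mfhat-\lambda\Vert^2\1_{\Omega_\Pi^c}]\lesssim 1/n$. Collecting the contributions from $\Omega_\Pi$ and $\Omega_\Pi^c$, bounding $\E\Vert\lambdahat_\mf-\lambda\Vert^2$ via Proposition~\ref{prop:dep:upper}, and minimizing over $\mf\in\Mc_n$ delivers the claim.

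The main obstacle is the control of $\chi_n$ under $\beta$-mixing: transporting the Poisson/Talagrand concentration of $\sup_{\Vert f\Vert\le1}\nu_n(f)$ through Viennet's coupling while tracking every constant, so that the resulting deviation term is genuinely dominated by $\penhat$; a related, bookkeeping-heavy point is pinning down the exponent $1/3$ and the constant $320$ in the admissible range for $M$ from the interplay of block length, the mixing decay in Assumption~\ref{ass:ex:lacour}, and the union bound over the $M$ cells.
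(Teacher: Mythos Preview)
Your overall plan — compare $\penhat$ to $\pen$ on a ``good'' event, use Viennet's coupling for the dependent part, and bound the bad event crudely — is the paper's plan, but two genuine gaps would prevent the argument from closing.

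First, you never split the centred process $\nu_n(f)=\frac1n\sum_i\{Y_if(X_i)-\E[Y_if(X_i)]\}$ into its Poisson part $\langle\Thetahat_n,f\rangle=\frac1n\sum_i\{Y_i-\lambda(X_i)\}f(X_i)$ and its covariate part $\langle\Thetatilde_n,f\rangle=\frac1n\sum_i\{\lambda(X_i)f(X_i)-\E[\lambda(X_i)f(X_i)]\}$. Lemma~\ref{lem:conc:cox} is a conditional statement given the covariates, with centering at $\lambda(X_i)f(X_i)$, not at $\E[Y_if(X_i)]$; it therefore controls only $\Thetahat_n$. For that piece the $Y_i$ are conditionally independent \emph{regardless} of the dependence among the $X_i$, so no blocking is needed and Lemma~\ref{conc:Theta:hat} carries over verbatim from the independent proof. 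The blocking is needed solely for $\Thetatilde_n$, a pure covariate process to which one applies the \emph{classical} Talagrand inequality (Lemma~\ref{PR:lem:ex:talagrand}) on the independent blocks, as in Lemma~\ref{lem:conc:nutilde} — this is not a Poisson statement at all. Your sentence ``conditionally on the covariates the responses stay Poisson, so Lemma~\ref{lem:conc:cox} applies to each family of independent blocks'' conflates the two pieces and leaves the covariate fluctuations uncontrolled.

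Second, $\P(\Omega_\Pi^\complement)\lesssim n^{-1}$ is too weak. The crude bound $\Vert\lambdahat_\mfhat\Vert^2\le\frac{\Phi^2\D_\mfhat}{n}\sum_iY_i^2$ with $\D_\mfhat\le c_\mf n$ and Cauchy--Schwarz produces a term of order $n\cdot\P(\Omega_\Pi^\complement)^{1/2}$, so you need $\P(\Omega_\Pi^\complement)\lesssim n^{-4}$ to reach $1/n$. The paper secures this (and treats the coupling event $\Xi^\ast=\{X_i=X_i^\ast\ \forall i\}$ as a separate third term, which you only absorb informally). Incidentally, with $q_n\asymp\sqrt n$ the number of independent blocks is $p_n\asymp\sqrt n$, not $n^{1/3}$; the exponent $1/3$ in the hypothesis on $M$ is a convenient sufficient condition, not an intrinsic ``effective sample size''.
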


\section{Numerical results}\label{sec:simulations}

Although the main focus of the present paper is the derivation of theoretical results we provide a short simulation study which is particularly intended to motivate and stimulate further research on the topic of the paper.
More precisely, we test our approach for the test function $\lambda: [0,1] \to (0,\infty)$ where
\begin{equation*}
	\lambda(x) = (5 + 5 \cos(2\pi x)) \cdot \1 _{[0,0.5]}(x) + 10x \cdot \1_{(0.5,1]}(x).
\end{equation*}
Concerning the covariates $X_i$, we assume $X_i \sim \Uc([0,1])$ where $\Uc([0,1])$ denotes the uniform distribution on the interval $[0,1]$.
In a first experiment, we assume the covariates to be independent, in a second one we introduce dependencies by the means of the following model:
$X_1 \sim \Uc([0,1])$, and then in order to generate $X_2,\ldots,X_n$ one recursively sets for $i=2,\ldots,n$
\begin{equation*}
	X_i = \lfloor 0.5X_{i-1} + \epsilon_i\rfloor
\end{equation*}
where the $\epsilon_i$ are i.i.d. $\sim \Nc (0,1)$.

Concerning the models, we consider approximating spaces in terms of piecewise constant functions (histograms).
More precisely, we put $\Mc_n = \{ 0,\ldots, \lfloor \log_2 n \rfloor \}$ and for $\mf \in \Mc_n$ define the space $\Sc_\mf$ as the linear span of the functions $\phi_j(x)$, $j=1,\ldots,2^\mf$ where
\begin{equation*}
    \phi_j(x) = \sqrt{2^\mf} \1_{[\frac{j-1}{2^\mf}, \frac{j}{2^\mf})}(x).
\end{equation*}
Note that the assumptions on the models made by us are satisfied:
Assumption~\ref{ass:model} holds true with $\Phi=1$, and
Assumption~\ref{ass:model:adaptive} is satisfied with $c_\mf = c_\Mc = 1$.
For the sake of convenience, we assume that we a priori know that $\Vert \lambda\Vert_\infty \leq 10$ and consider a penalty of the form
$\pen(\mf) = \kappa \cdot \D_\mf \cdot \xi^2 \cdot \frac{\log n}{n}$
with $\xi^2 = 100$ and $\kappa$ a numerical constant.
Concerning the latter, we test various choices.
It is a phenomenon often recognized in non-parametric model selection that the numerical constant in the definition of the penalty term which is convenient to derive theoretical results is by much too large to obtain reasonable results for samples of small size.
This phenomenon is also encountered in our simulation study.
In contrast to this unpleasant behaviour, the overall method is not very demanding from a computational point of view because we essentially minimize the penalized contrast criterion over the set of admissible models which is at most of order $n$ by Assumption~\ref{ass:model:adaptive}.
The results of our simulation experiments are summarized in Tables~\ref{tab:ind_cov} and~\ref{tab:dep_cov} whereas illustrations are given in Figures~\ref{fig:ind_cov} and~\ref{fig:dep_cov}.

\def\arraystretch{1.1}
\setlength{\tabcolsep}{8pt}
\begin{table}[h]
	\footnotesize
	\centering
	\begin{tabular}{ccccccc}
		  &  \multicolumn{2}{c}{$\kappa=0.08$} & \multicolumn{2}{c}{${\kappa=0.09}$} & \multicolumn{2}{c}{$\kappa=0.10$}\\
		 $n$ & error & \textcolor{darkgray}{sd} & error & \textcolor{darkgray}{sd} & error & \textcolor{darkgray}{sd}\\
		 \hline
		 1024 & 1.2171 & \textcolor{darkgray}{0.8069} & \textbf{1.1901} & 
		 \textcolor{darkgray}{0.7209} & 1.2496 & \textcolor{darkgray}{0.6790}\\
		 2048 & 0.7326 &
		 \textcolor{darkgray}{0.3354} & \textbf{0.7097} & \textcolor{darkgray}{0.2510} & 0.7428 &
		 \textcolor{darkgray}{0.2786}
		\vspace{1em}
	\end{tabular}
	\caption{Performance of the histogram estimator in the case of independent covariates for different values of $\kappa$. Mean error and standard deviation were computed over $500$ independent replicates of the experiment. Minimal errors were obtained for $\kappa=0.09$ for both sample sizes considered.}
	\label{tab:ind_cov}
\end{table}
\def\arraystretch{0.1}
\begin{figure}[h]
	\centering
	\begin{subfigure}{.5\textwidth}
		\centering
		\includegraphics[width=\linewidth]{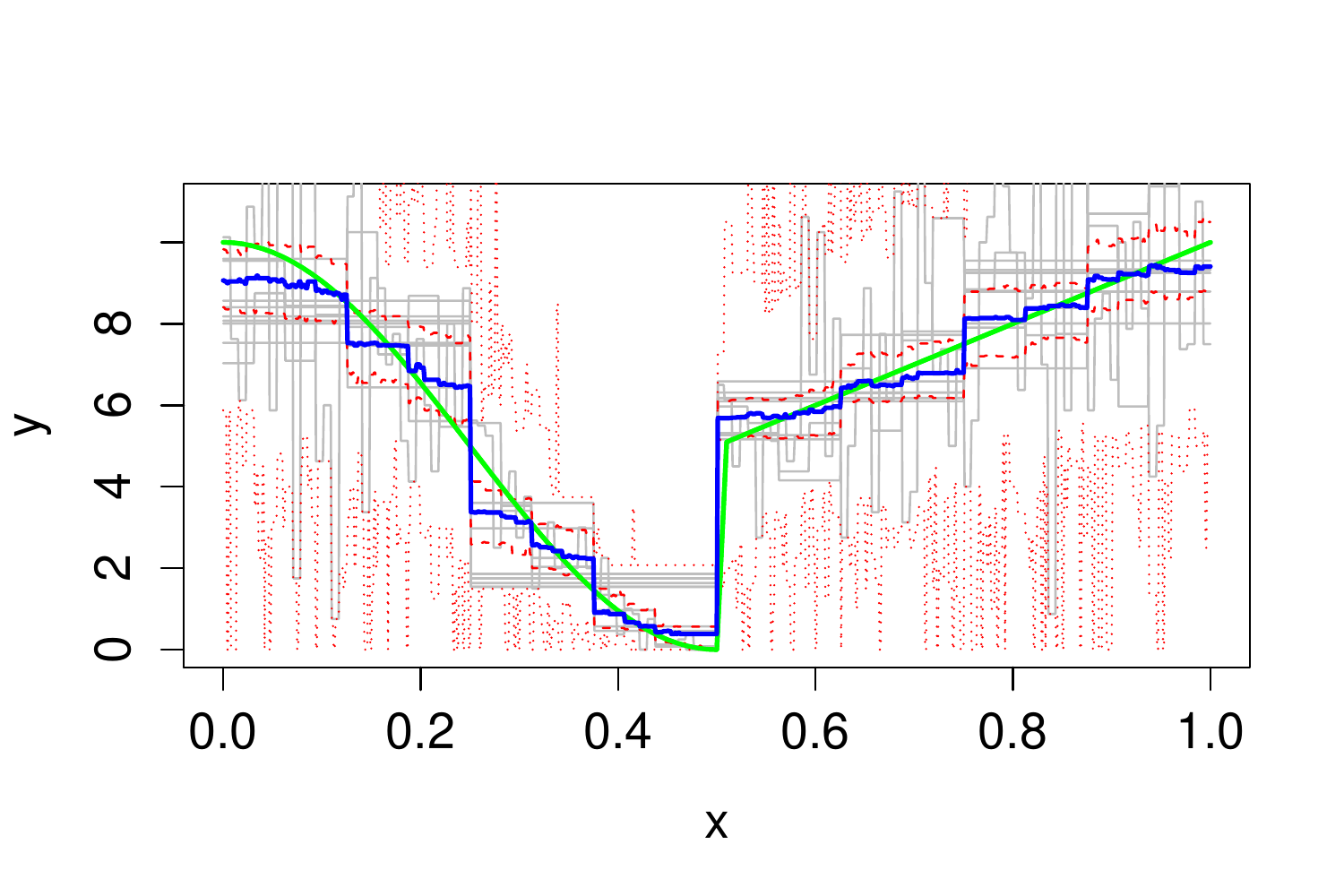}
		\caption{$n=1024, \kappa=0.09$}
	\end{subfigure}%
	\begin{subfigure}{.5\textwidth}
		\centering
		\includegraphics[width=\linewidth]{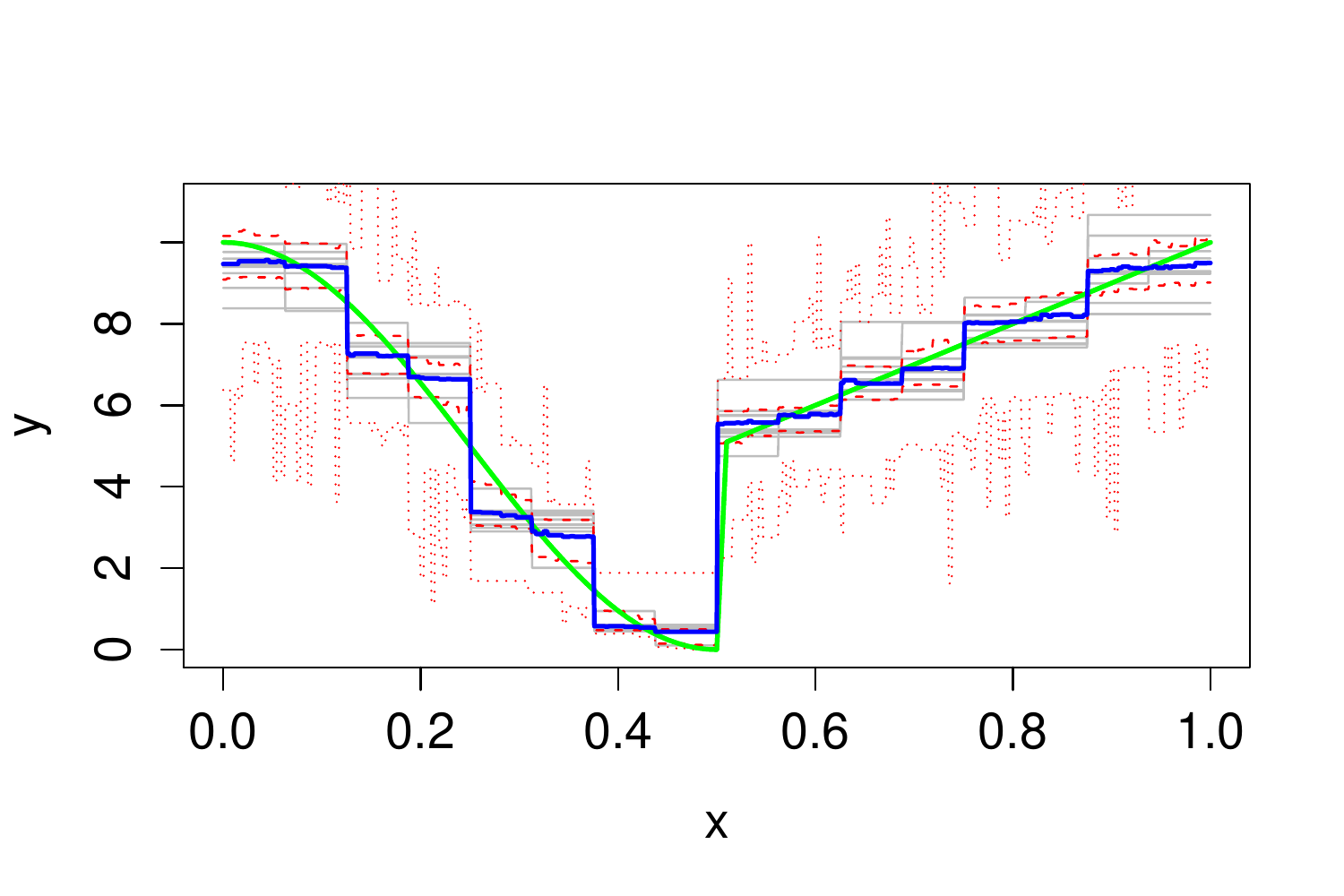}
		\caption{$n=2048, \kappa=0.09$}
	\end{subfigure}
	\caption{Illustration of the simulation experiment in the case of independent covariates. The true function is plotted in green, whereas the median of the estimator over $n=100$ experiments is in blue. The dashed red lines indicate the empirical pointwise 0.25 resp.\ 0.75 quantile. The dotted red lines indicate the empirical pointwise 0.99 resp.\ 0.01 quantile. The grey lines show some exemplary outcomes of single experiments.}
	\label{fig:ind_cov}
\end{figure}

\def\arraystretch{1.1}
\setlength{\tabcolsep}{9pt}
\begin{table}[h]
	\footnotesize
	\centering
	\begin{tabular}{ccccccccc}
		& \multicolumn{2}{c}{$\kappa=0.08$} & \multicolumn{2}{c}{$\kappa=0.09$} & \multicolumn{2}{c}{$\kappa=0.10$}\\
		$n$ &  error & \textcolor{darkgray}{sd} & error & \textcolor{darkgray}{sd} & error & \textcolor{darkgray}{sd}\\
		\hline
		1024 & 1.2785 & \textcolor{darkgray}{1.1981} & \textbf{1.1505} & \textcolor{darkgray}{0.6012} & 1.2588 & \textcolor{darkgray}{0.6931}\\
		2048 & 0.7112 & \textcolor{darkgray}{0.2899} & \textbf{0.6967} & \textcolor{darkgray}{0.2370} & 0.7375 & \textcolor{darkgray}{0.2733}
		\vspace{1em}
	\end{tabular}
	\caption{Performance of the histogram estimator in the case of dependent covariates for different values of $\kappa$ and different sample sizes $n$. Mean error and standard deviation were computed over $500$ independent replicates of the experiment. Minimal errors were obtained for $\kappa=0.09$ for both sample sizes considered.}
	\label{tab:dep_cov}
\end{table}
\def\arraystretch{1}

\begin{figure}[h]
	\centering
	\begin{subfigure}{.5\textwidth}
		\centering
		\includegraphics[width=\linewidth]{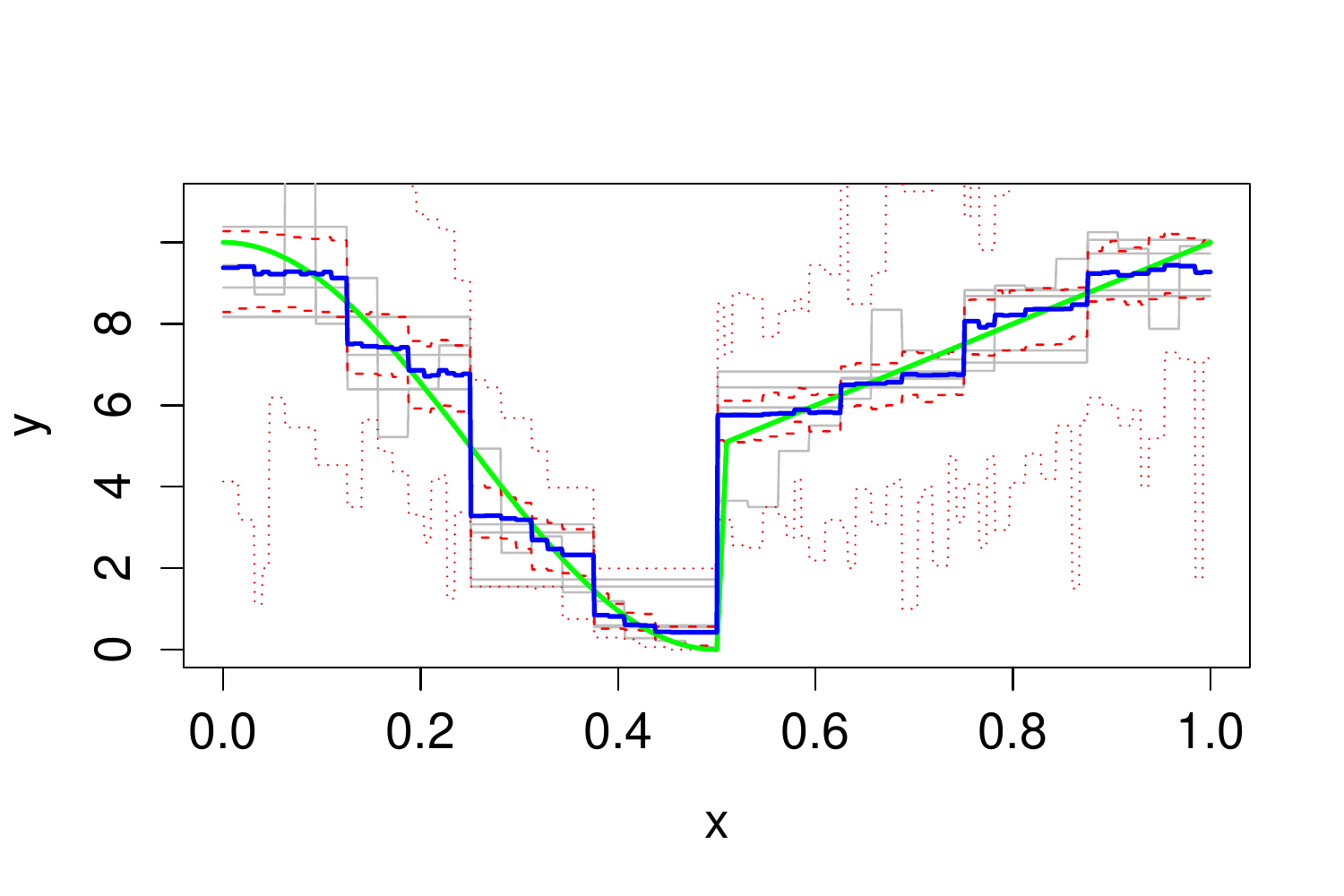}
		\caption{$n=1024, \kappa=0.09$}
	\end{subfigure}%
	\begin{subfigure}{.5\textwidth}
		\centering
		\includegraphics[width=\linewidth]{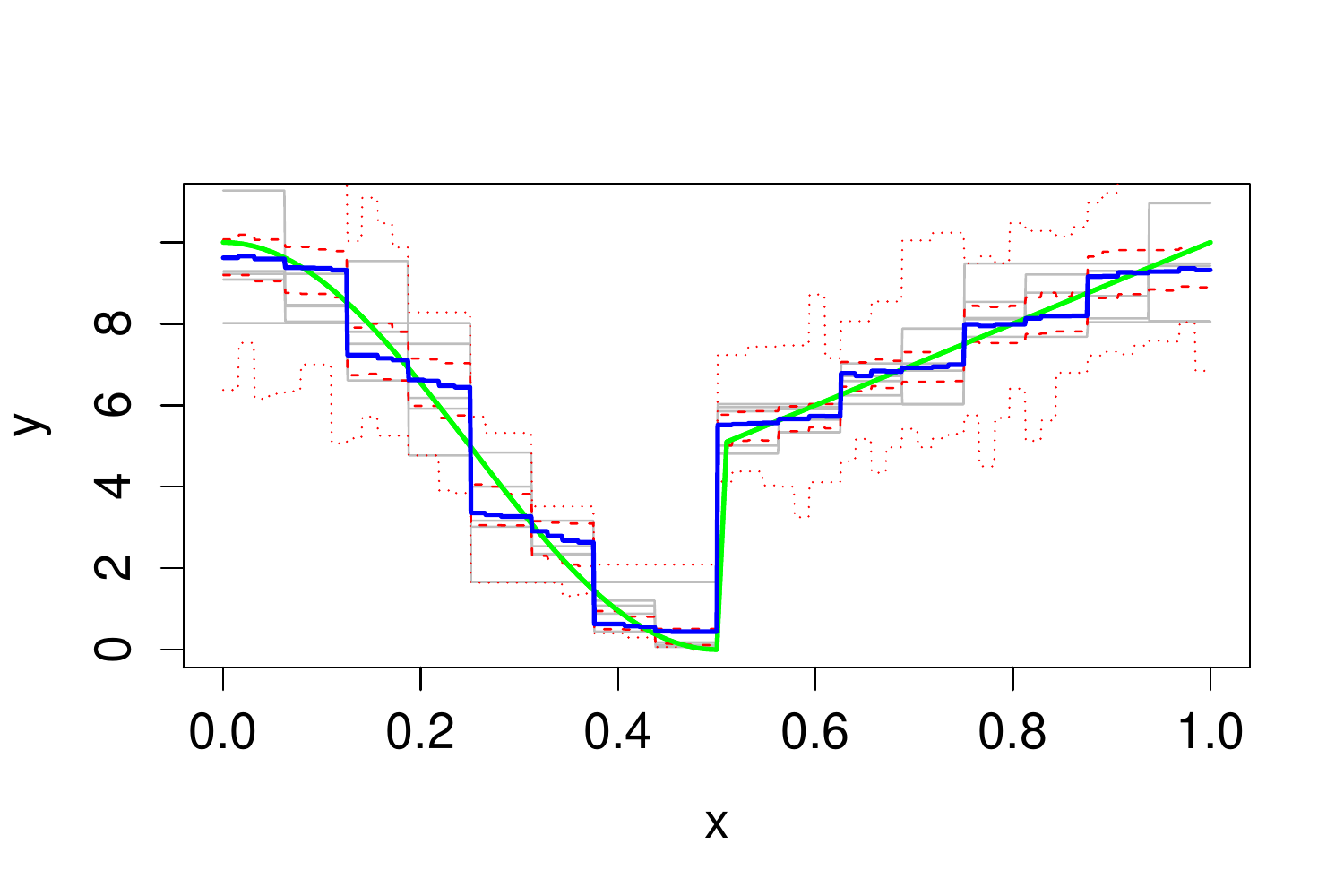}
		\caption{$n=2048, \kappa=0.09$}
	\end{subfigure}
	\caption{Illustration of the simulation experiment in the case of dependent covariates. The linetypes and colours are chosen as in Figure~\ref{fig:ind_cov}.}
	\label{fig:dep_cov}
\end{figure}

\section{Conclusion and outlook to future research}\label{sec:discussion}

In this paper, we have considered adaptive non-parametric Poisson regression via model selection in the case of independent and $\beta$-mixing covariates.
More precisely, the main objective has been the derivation of oracle inequalities (Theorems~\ref{thm:adaptive:xi:known}, \ref{thm:adaptive:xi:unknown}, and \ref{thm:adaptive:dep}).
A take-home message from our results might be that the price to pay for allowing also weakly dependent covariates is not too high.
Both from a theoretical point of view (slightly harder assumptions in our theorems) as well as in our simulations (there are hardly differences except of fluctuations from one simulation to another) there seems to be no negative impact of weakly depending covariates in contrast to independent ones (note that nearly the same conclusion can be drawn from the results in~\cite{asin2016adaptive_a} whereas \cite{neumann1998strong} provides strong theoretical results in a density estimation context that make our results plausible).
Our simulations have also indicated that the theoretically justified numerical constants appearing in our definition of the penalty terms are by far too large in order to yield good simulation results for moderate sample sizes.
Concerning this aspect, the transfer of very recent results due to Lacour and coauthors (\cite{lacour2016minimal} and \cite{lacour2017estimator}) to our setup might be of interest.
Of course, one could also combine the model selection technique with the recent method by Goldenshluger and Lepski as was done in~\cite{asin2016adaptive_a} for density estimation and Gaussian regression.
However, the simulation study performed in \cite{asin2016adaptive_a} has already shown a comparable performance of this approach to the pure model selection approach considered in the present paper.

Another point of origin for future research might be to consider the case of high-dimensional covariates as was done recently in~\cite{guilloux2016adaptive} and~\cite{guilloux2016adaptive_b}.

The most important open question concerns either the development of an adaptive estimation technique that is capable to obtain the optimal rate of convergence without the additional logarithmic factor or the derivation of a theoretical result showing that this is not possible.
This aspect will be considered in future research as well as the investigation of kernel type estimators instead of projection estimators.

\appendix

\section{Proofs of Section~\ref{sec:independent}}

\subsection{Proof of Proposition~\ref{prop:ind:upper}}

	We have the bias-variance decomposition
	\begin{equation*}
		\E \Vert \lambdahat_\mf - \lambda \Vert^2 = \Vert \lambda_\mf - \lambda \Vert^2 + \E \Vert \lambdahat_\mf - \lambda_\mf \Vert^2.
	\end{equation*}
	Exploiting Assumption~\ref{ass:model} and the independence assumption on the $X_i$ yields for the variance term the estimate
	\begin{align*}
		\E \Vert \widehat \lambda_\mf - \lambda_\mf \Vert^2 &= \sum_{\eta \in \Ic_\mf} \Var(\thetahat_\eta) = \frac{1}{n} \sum_{\eta \in \Ic_\mf} \Var(Y_1\phi_\eta(X_1))\\
		&\leq \frac{1}{n} \E \left[ \sum_{\eta \in \Ic_\mf} Y_1^2\phi_\eta^2(X_1) \right] \leq \frac{\Phi^2\D_\mf}{n} \cdot \E Y_1^2\\
		&\leq \frac{\Phi^2\D_\mf}{n} \cdot (\Vert \lambda \Vert^2 + \Vert \lambda \Vert_1),
	\end{align*}
	and hence the result follows.

\subsection{Proof of Theorem~\ref{thm:lower}}

	For each $\tau = (\tau_j)_{0 \leq \vert j \vert \leq \mopt} \in \{ \pm 1 \}^{2\mopt + 1}$ we define the function $\lambda_\tau$ as
	\begin{align*}
		\lambda_\tau &= \left( \frac{R}{4} \right)^{1/2} + \tau_0 \left( \frac{R\zeta}{16n} \right)^{1/2} + \left( \frac{R\zeta}{16n} \right)^{1/2} \sum_{1 \leq \jabs \leq \mopt} \tau_j \phi_j\\
		&= \left( \frac{R}{4} \right)^{1/2} + \left( \frac{R\zeta}{16n} \right)^{1/2} \sum_{0 \leq \jabs \leq \mopt} \tau_j \phi_j
	\end{align*}
	where $\zeta = \min \{ 1/(\Gamma \eta), 2/R \}$ and the $\phi_j$ are defined as in Example~\ref{example:sobolev}.
	We have
	\begin{align*}
		\left\lVert \left( \frac{R\zeta}{16n} \right)^{1/2} \sum_{0 \leq \jabs \leq \mopt} \tau_j \phi_j \right\rVert_\infty &\leq \left( \frac{R\zeta}{16n} \right)^{1/2} \sum_{0 \leq \jabs \leq \mopt} \sqrt 2\\
		&\leq \left( \frac{R\zeta}{8} \right)^{1/2} \left( \sum_{0 \leq \jabs \leq \mopt} \gamma_j^{-2} \right)^{1/2}  \left( \sum_{0 \leq \jabs \leq \mopt} \frac{\gamma_j^2}{n} \right)^{1/2}\\
		&\leq \left( \frac{\Gamma R \zeta}{8} \right)^{1/2} \left( \gamma_\mopt^2 \cdot \frac{2\mopt + 1}{n} \right)^{1/2}
		\leq \left( \frac{\Gamma R \zeta \eta}{n} \right)^{1/2} \leq \left( \frac{R}{8}\right)^{1/2},
	\end{align*}
	and hence $\lambda_\tau \geq \sqrt R \cdot ( 1/2 - 1/(2\sqrt 2) )$ (in particular, $\lambda_\tau$ is non-negative).
	Together with the calculation
	\begin{align*}
		\left[ \left( \frac{R}{4} \right)^{1/2}  + \tau_0 \left( \frac{R\zeta}{16n} \right)^{1/2}  \right]^{2} + \left( \frac{R\zeta}{16n} \right) \sum_{1 \leq \jabs \leq \mopt} \frac{\gamma_j^2}{n} \leq \frac{R}{2} + \frac{R\zeta}{16n} \gamma_\mopt^2 \frac{2\mopt + 1}{n} \leq R 
	\end{align*}
	this shows that $\lambda_\tau \in \Theta_\gamma^R$ for every $\tau \in \{  \pm 1 \}^{2\mopt + 1}$.
	
	We now derive a reduction scheme which holds for an arbitrary estimator $\widetilde \lambda$ of $\lambda$.
	For this purpose, denote $\boldsymbol X = (X_1,\ldots, X_n)$, $\boldsymbol Y = (Y_1,\ldots, Y_n)$, and by $\P_\tau^{\boldsymbol Y| \boldsymbol X}$ the conditional distribution of $\boldsymbol Y$ given $\boldsymbol X$ when the true regression function is $\lambda_\tau$.
	By $\E_\tau[\cdot | \boldsymbol X]$ we denote the corresponding conditional expectation operator.
	Then the following reduction scheme holds
	\begin{align}
	\sup_{\lambda \in \Theta_\gamma^R} \E \Vert \widetilde \lambda - \lambda \Vert^2 &\geq \frac{1}{2^{2\mopt + 1}} \sum_{\tau \in \{ \pm 1 \}^{2\mopt +1} } \sum_{0 \leq \vert j \vert \leq \mopt}  \E \E_\tau [ \vert \thetatilde_j - \theta_{\tau j} \vert^2 | \Xb ] \notag \\
	&\hspace{-6em} = \frac{1}{2^{2\mopt + 1}} \sum_{0 \leq \vert j \vert \leq \mopt} \sum_{\tau \in \{ \pm 1 \}^{2\mopt +1} } \frac{1}{2} \{ \E  \E_\tau [ \vert \thetatilde_j - \theta_{\tau j} \vert^2 | \Xb ] + \E \E_{\tau^{j}} [ \vert \thetatilde_j - \theta_{\tau^j j} \vert^2 | \Xb ] \} \label{PR:reduction}
	\end{align}
	where $\thetatilde_j$, $\theta_{\tau j}$ are the coefficients of $\lambdatilde$ and $\lambda_\tau$ corresponding to the basis function $\phi_j$, respectively, and for $\tau \in \{ \pm 1 \}^{2\mopt + 1}$ the element $\tau^{j} \in \{ \pm 1 \}^{2\mopt + 1}$ is defined by $\tau_k^{j}=\tau_k$ for $k \neq j$ and $\tau_j^{j} = - \tau_j$.
	Consider the Hellinger affinity defined through
	\begin{equation*}
	  \rho(\P_\tau^{\Yb|\Xb}, \P_{\tau^{j}}^{\Yb|\Xb}) = \int \sqrt{d\P_\tau^{\Yb|\Xb} d\P_{\tau^{j}}^{\Yb|\Xb} }.
	\end{equation*}
	We have
	\begin{align*}
	\rho(\P_\tau^{\Yb|\Xb}, \P_{\tau^{(j)}}^{\Yb|\Xb}) &\leq \int \frac{\vert \thetatilde_j - \theta_{\tau j} \vert}{\vert \theta_{\tau j} - \theta_{\tau^j j} \vert} \sqrt{d\P_\tau^{\Yb|\Xb} d\P_{\tau^{j}}^{\Yb|\Xb} }  + \int \frac{\vert \thetatilde_j - \theta_{\tau^j j} \vert}{\vert \theta_{\tau j} - \theta_{\tau^j j} \vert} \sqrt{d\P_\tau^{\Yb|\Xb} d\P_{\tau^{(j)}}^{\Yb|\Xb} } \\
	& \leq \left( \int \frac{\vert \thetatilde_j - \theta_{\tau j} \vert^2}{\vert \theta_{\tau j} - \theta_{\tau^{j} j} \vert^2} d\P_\tau^{\Yb | \Xb} \right)^{1/2} + \left( \int \frac{\vert \thetatilde_j - \theta_{\tau^j j} \vert^2}{\vert \theta_{\tau j} - \theta_{\tau^j j} \vert^2} d\P_{\tau^{j}}^{\Yb | \Xb} \right)^{1/2}.
	\end{align*}
	By means of the estimate $(a+b)^2 \leq 2a^2 + 2b^2$ we obtain
	\begin{equation}\label{eq:hell:aff}
	\frac{1}{2} \vert \theta_{\tau j} - \theta_{\tau^j j} \vert^2 \rho^2 (\P_\tau^{\Yb | \Xb}, \P_{\tau^{j}}^{\Yb | \Xb}) \leq \E_\tau [ \vert \thetatilde_j - \theta_{\tau j} \vert^2 | \Xb ] + \E_{\tau^{j}}[ \vert \thetatilde_j - \theta_{\tau^j j} \vert^2 | \Xb ].
	\end{equation}
	For any $\tau \in \{ \pm 1 \}^{2\mopt + 1}$, let us denote by $\P_\tau^{Y_i|\Xb}$ the marginal distribution of $Y_i$ given $\Xb$ (since the distribution of $Y_i$ given $\Xb$ depends on $X_i$ only, we could equally write $\P_\tau^{Y_i|X_i}$).
	Formula~(5) from~\cite{roos2003improvements} allows us to bound the total variation distance $\mathrm{TV}(\P_\tau^{Y_i | \Xb}, \P_{\tau^{j}}^{Y_i | \Xb})$ between $\P_\tau^{Y_i | \Xb}$ and $\P_{\tau^{j}}^{Y_i | \Xb}$ as
	\begin{equation*}
		\mathrm{TV}(\P_\tau^{Y_i | \Xb}, \P_{\tau^{j}}^{Y_i | \Xb}) \leq \vert \lambda_\tau(X_i) - \lambda_{\tau^j}(X_i) \vert \leq \vert 2\sqrt{2}(R\zeta/(16n))^{1/2}  \vert \leq \left( \frac{R\zeta}{2n} \right)^{1/2} \leq \frac{1}{\sqrt n}
	\end{equation*}
	due to the definition of $\zeta$.
	Recall the definition $H^2(\P, \Q) = \int [\sqrt{d\P} - \sqrt{d\Q}]^2$ of the squared Helliger distance between two probability measures $\P$ and $\Q$.
	By formula (2.20) from~\cite{tsybakov2009introduction} it follows that
	\begin{equation*}
		H^2(\P_\tau^{Y_i | \Xb}, \P_{\tau^{j}}^{Y_i | \Xb}) \leq \frac{1}{n}.
	\end{equation*}
	Since $Y_1,\ldots,Y_n$ are independent conditionally on $X_1,\ldots,X_n$ we obtain by Lemma~3.3.10~(i) from~\cite{reiss1993course} that
	\begin{equation*}
	H^2(\P_\tau^{\Yb | \Xb}, \P_{\tau^{j}}^{\Yb | \Xb}) \leq \sum_{i=1}^{n} H^2(\P_\tau^{Y_i | \Xb}, \P_{\tau^{j}}^{Y_i | \Xb}) \leq 1.
	\end{equation*}
	Hence the relation $\rho(\P_\tau^{\Yb | \Xb}, \P_{\tau^{j}}^{\Yb | \Xb}) = 1- H^2(\P_\tau^{\Yb | \Xb}, \P_{\tau^{j}}^{\Yb | \Xb})/2$ (see~\cite{tsybakov2009introduction}, p.~87) implies $\rho(\P_\tau^{\Yb | \Xb}, \P_{\tau^{j}}^{\Yb | \Xb}) \geq 1/2$.
	Putting this estimate into the reduction scheme~\eqref{PR:reduction} finally yields using~\eqref{eq:hell:aff} that
	\begin{align*}
	\sup_{\lambda \in \Theta_\gamma^R} \E \Vert \widetilde \lambda - \lambda \Vert^2 &\geq \frac{1}{2^{2\mopt + 1}} \sum_{\tau \in \{ \pm 1 \}^{2\mopt + 1} } \sum_{0 \leq \vert j \vert \leq \mopt} \frac{1}{2} \E [ \E_\tau [ \vert \thetatilde_j - \theta_{\tau j} \vert^2 | \Xb] + \E_{\tau^{j}} [ \vert \thetatilde_j - \theta_{\tau^j j} \vert^2 | \Xb] ] \\
	& \geq \frac{1}{16} \sum_{0 \leq \vert j \vert \leq \mopt} \vert \theta_{\tau j} - \theta_{\tau^j j} \vert^2 = \frac{ R\zeta}{64} \sum_{0 \leq \vert j\vert \leq \mopt} \frac{1}{n} = \frac{R\zeta}{64} \cdot \frac{2\mopt + 1}{n}.
	\end{align*}
	Since the last estimate holds for arbitrary $\widetilde \lambda$, we obtain the claim assertion by means of Assumption~\ref{PR:it:C2}.

\subsection{Proof of Theorem~\ref{thm:adaptive:xi:known}}

	Note that the identity $\Upsilon_n(f) = \Vert \lambdahat_n - f \Vert^2 - \Vert \lambdahat_n \Vert^2$ holds for all $f \in L^2$.
	Hence $\lambdahat_\mf = \argmin_{f \in \Sc_\mf} \Upsilon_n(f)$ for all $\mf \in \Mc_n$, and exploiting the definition of $\mftilde$ yields for all $\mf \in \Mc_n$ that
	\begin{equation*}
		\Upsilon_n(\lambdahat_\mftilde) + \pen(\mftilde) \leq \Upsilon_n(\lambdahat_\mf) + \pen(\mf) \leq \Upsilon_n(\lambda_\mf) + \pen(\mf)
	\end{equation*}
	where $\lambda_\mf = \sum_{\eta \in \Ic_\mf} \theta_\eta \phi_\eta$ is the projection of $\lambda$ on the finite-dimensional space $\Sc_\mf$.
	In a similar manner, let us denote by $\lambda_n$ the projection of $\lambda$ on the space $\Sc_n$, i.e., $\lambda_n = \sum_{\eta \in \Ic_n} \theta_\eta \phi_n$.
	By definition of the contrast and some algebra we obtain
	\begin{equation*}
		\Vert \lambdahat_\mftilde - \lambda \Vert^2 \leq \Vert \lambda_\mf - \lambda \Vert^2 + 2 \langle \lambdahat_n - \lambda_n, \lambdahat_\mftilde - \lambda_\mf \rangle + \pen(\mf) - \pen(\mftilde).
	\end{equation*}
	Setting $\thetatilde_\eta = \frac{1}{n} \sum_{i=1}^{n} \lambda(X_i) \phi_\eta(X_i)$ and $\lambdatilde_n = \sum_{\eta \in \Ic_n} \thetatilde_\eta \phi_\eta$ we obtain
	\begin{equation*}
		\Vert \lambdahat_\mftilde - \lambda \Vert^2 \leq \Vert \lambda_\mf - \lambda \Vert^2 + 2 \langle \Thetahat_n, \lambdahat_\mftilde - \lambda_\mf \rangle + 2 \langle \Thetatilde_n, \lambdahat_\mftilde - \lambda_\mf \rangle + \pen(\mf) - \pen(\mftilde)
	\end{equation*}
	where $\Thetahat_n = \lambdahat_{n} - \lambdatilde_{n}$ and $\Thetatilde_n = \lambdatilde_{n} - \lambda_{n}$.
	Set $\Bc_\mf = \{ \lambda \in \Sc_\mf : \Vert \lambda \Vert \leq 1 \}$.
	Using the estimate $2xy \leq \tau x^2 + \tau^{-1} y^2$ for positive $\tau$ (below  we specialize with $\tau = 1/8$) we conclude
	\begin{align*}
		\Vert \lambdahat_{\mftilde} - \lambda \Vert^2 &\leq \Vert \lambda_\mf - \lambda \Vert^2 + 2 \tau \Vert \lambdahat_\mftilde - \lambda_\mf \Vert^2 + \tau^{-1} \sup_{t \in \Bc_{\mf \vee \mftilde} } \vert \langle \Thetahat_n, t \rangle \vert^2  + \tau^{-1} \sup_{t \in \Bc_{\mf \vee \mftilde}} \vert \langle \Thetatilde_n, t \rangle \vert^2\\
		&\hspace{1em} + \pen(\mf) - \pen(\mftilde)
	\end{align*}
	(here, $\mf \vee \mftilde$ denotes the maximal model of $\mf$ and $\mftilde$ such that $\Sc_{\mf \vee \mftilde} = \Sc_\mf \cup \Sc_\mftilde$; this model exists thanks to Assumption~\ref{ass:model:adaptive}).
	Specializing with $\tau = 1/8$ we conclude that for each model $\mf \in \Mc_n$
	\begin{align*}
		\Vert \widehat \lambda_{\mftilde} - \lambda \Vert^2 &\leq 3 \Vert \lambda-\lambda_\mf \Vert^2 + 16 \sup_{t \in \Bc_{\mf \vee \mftilde}} \vert \langle \Thetahat_n, t \rangle \vert^2 + 16 \sup_{t \in \Bc_{\mf \vee \mftilde}} \vert \langle \Thetatilde_n, t \rangle \vert^2 + 2\pen(\mf) - 2 \pen(\mftilde)\\
		&\leq 3 \Vert \lambda-\lambda_\mf \Vert^2 + 16 \left( \sup_{t \in \Bc_{\mf \vee \mftilde}} \vert \langle \Thetahat_n, t \rangle \vert^2 - 50 \mu \cdot \frac{\Phi^2\D_{\mf \vee \mftilde} \log(n+2)}{n} \right)_+\\
		&\hspace{1em}+ 16 \left( \sup_{t \in \Bc_{\mf \vee \mftilde}} \vert \langle \Thetatilde_n, t \rangle \vert^2 - 3 \mu \cdot \frac{\Phi^2\D_{\mf \vee \mftilde}}{n} \right)_+\\
		&\hspace{1em}+800\mu \cdot \frac{\Phi^2\D_{\mf \vee \mftilde} \log(n+2)}{n} + 48 \mu \cdot \frac{\Phi^2\D_{\mf \vee \mftilde}}{n} + 2 \pen(\mf) - 2 \pen(\mftilde).
	\end{align*}
	By definition of the penalty, the estimate $\D_{\mf \vee \mftilde} \leq \D_\mf + \D_{\mftilde}$ and roughly bounding the brackets $(\ldots)_+$ by summing over all potential models $\mf \in \Mc_n$, we have
	\begin{align*}
		\Vert \widehat \lambda_{\mftilde} - \lambda \Vert^2 &\leq 3 \Vert \lambda - \lambda_\mf \Vert^2 + 16 \sum_{\mf' \in \Mc_n} \left( \sup_{t \in \Bc_{\mf'}} \vert \langle \Thetahat_n, t \rangle \vert^2 - 50 \mu \cdot \frac{\Phi^2\D_{\mf'} \log(n+2)}{n} \right)_+\\
		&+ 16 \sum_{\mf' \in \Mc_n} \left( \sup_{t \in \Bc_\mf'} \vert \langle \Thetatilde_n, t \rangle \vert^2 - 3 \mu \cdot \frac{\Phi^2\D_{\mf'}}{n} \right)_+ + 4 \pen(\mf).
	\end{align*}
	Taking expectations and into account that the last estimate holds for each $\mf \in \Mc_n$, we obtain
	\begin{align}\label{eq:adap:xi:known:sums}
		\E \Vert \lambdahat_\mftilde - \lambda \Vert^2 &\leq \min_{\mf \in \Mc_n} \{ 3 \Vert \lambda - \lambda_\mf \Vert^2 + 4\pen(\mf) \}\notag\\
		&+ 16 \sum_{\mf' \in \Mc_n} \E \left[ \left( \sup_{t \in \Bc_{\mf'}} \vert \langle \Thetahat_n, t \rangle \vert^2 - 50 \mu \cdot \frac{\Phi^2 \D_{\mf'} \log(n+2)}{n} \right)_+ \right]\notag\\
		&+ 16 \sum_{\mf' \in \Mc_n} \E \left[ \left( \sup_{t \in \Bc_{\mf'}} \vert \langle \Thetatilde_n, t \rangle \vert^2 - 3 \mu \cdot \frac{\Phi^2 \D_{\mf'}}{n} \right)_+ \right]\notag\\
		&\eqdef \min_{\mf \in \Mc_n} \{ 3 \Vert \lambda - \lambda_\mf \Vert^2 + 4\pen(\mf) \} + 16 \sum_{\mf' \in \Mc_n} E_{\mf' 1} + 16 \sum_{\mf' \in \Mc_n} E_{\mf' 2}
	\end{align}
	We now use Lemmata~\ref{conc:Theta:tilde} and~\ref{conc:Theta:hat} to bound the terms $E_{\mf'1}$ and $E_{\mf'2}$ which yields
	\begin{align*}
		E_{\mf'1} &\leq K_1' \left\lbrace \frac{\D_{\mf'}}{n} \exp(-2 \log(n+2)) + \frac{\D_{\mf'}}{n^2} \exp(-K_2'\sqrt{n}) \right\rbrace, \quad \text{and}\\
		E_{\mf'2} &\leq K_1 \left\lbrace \frac{1}{n} \exp(-K_2 \D_{\mf'}) + \frac{\D_{\mf'}}{n^2} \exp(-K_3 \sqrt n) \right\rbrace.
	\end{align*}
	Putting these estimates into~\eqref{eq:adap:xi:known:sums}, using $\D_{\mf'} \leq c_\mf n$ for $\mf' \in \Mc_n$ and $\vert \Mc_n \vert \leq c_{\Mc} n$ (which hold due to Assumption~\ref{ass:model:adaptive}), we obtain
	\begin{equation*}
		\E \Vert \lambdahat_\mftilde - \lambda \Vert^2 \lesssim \min_{\mf \in \Mc_n} \max \{ \Vert \lambda_\mf - \lambda \Vert^2, \pen(\mf) \} + \frac{1}{n}.
	\end{equation*}

\subsection{Proof of Theorem~\ref{thm:adaptive:xi:unknown}}

	Let us introduce the event $\Xi = \left\lbrace \left|  \frac{\Vert \lambdahat_\Pi \Vert_\infty \vee 1}{\Vert \lambda \Vert_\infty \vee 1} - 1 \right|  < \frac{3}{4} \right\rbrace$.
	It is readily verified that on $\Xi$ it holds that
	\begin{equation*}
		\Vert \lambda \Vert_\infty \vee 1 \leq 4 ( \Vert \lambdahat_\Pi \Vert_\infty \vee 1 ) \quad \text{and} \quad \Vert \lambdahat_\Pi \Vert_\infty \vee 1 \leq \frac{7}{4} ( \Vert \lambda \Vert_\infty \vee 1 ). 
	\end{equation*}
	These estimates will be used below without further reference.
	We consider the decomposition
	\begin{equation*}
		\E \Vert \lambdahat_\mfhat - \lambda \Vert^2 = \E \Vert \lambdahat_\mfhat - \lambda \Vert^2 \1_\Xi + \E \Vert \lambdahat_\mfhat - \lambda \Vert^2 \1_{\Xi^\complement} \eqdef T_1 + T_2.
	\end{equation*}
	\noindent \emph{Upper bound for $T_1$}: In analogy to the proof of Theorem~\ref{thm:adaptive:xi:known} one can derive (in the following, all the appearing quantities are defined as in the proof of Theorem~\ref{thm:adaptive:xi:known})
	\begin{align*}
		\Vert \lambdahat_\mfhat - \lambda \Vert^2 &\leq 3 \Vert \lambda_\mf - \lambda \Vert^2 + 16 \sup_{t \in \Bc_\mf} \vert \langle \Thetahat_n, t \rangle \vert^2 + 16 \sup_{t \in \Bc_\mf} \vert \langle \Thetatilde_n, t \rangle \vert^2 + 2 \penhat(\mf) - 2\penhat(\mfhat)\\
		&\leq 3 \Vert \lambda_\mf - \lambda \Vert^2 + 16 \left( \sup_{t \in \Bc_\mf} \vert \langle \Thetahat_n, t\rangle \vert^2 - 50\mu \cdot \frac{\Phi^2\D_{\mf \vee \mfhat} \log (n+2)}{n} \right)_+\\
		&\hspace{1em}+ 16 \left( \sup_{t \in \Bc_\mf} \vert \langle \Thetatilde_n, t\rangle \vert^2 - 3 \mu \cdot \frac{\Phi^2\D_{\mf \vee \mfhat}}{n} \right)_+\\
		&\hspace{1em}+ 800 \mu \cdot \frac{\Phi^2\D_{\mf \vee \mfhat} \log(n+2) }{n} + 48 \mu \cdot \frac{\Phi^2\D_{\mf \vee \mfhat}}{n} + 2 \penhat(\mf) - 2 \penhat(\mfhat). 
	\end{align*}
	By definition of $\Xi$ and the random penalty function, we obtain (note that $\D_{\mf \vee \mfhat} \leq \D_\mf + \D_\mfhat$)
	\begin{align*}
		\Vert \lambdahat_\mfhat - \lambda \Vert^2 \1_\Xi &\leq 3 \Vert \lambda_\mf - \lambda \Vert^2 + 16 \left( \sup_{t \in \Bc_\mf} \vert \langle \Thetahat_n, t\rangle \vert^2 - 50\mu \cdot \frac{\Phi^2\D_{\mf \vee \mfhat} \log (n+2)}{n} \right)_+\\
		&+ 16 \left( \sup_{t \in \Bc_\mf} \vert \langle \Thetatilde_n, t\rangle \vert^2 - 3 \mu \cdot \frac{\Phi^2\D_{\mf \vee \mfhat}}{n} \right)_+ + 100 \pen(\mf).
	\end{align*} 
	Bounding the terms in the brackets $(\ldots)_+$ by summing over all admissible models $\mf \in \Mc_n$ and taking expectations on both sides yields
	\begin{align*}
		\E \Vert \lambdahat_\mfhat - \lambda \Vert^2 \1_\Xi &\leq 3 \Vert \lambda_\mf - \lambda \Vert^2 + 100 \pen(\mf) \\
		&\hspace{1em}+ 16 \sum_{\mf' \in \Mc_n} \E \left[ \left( \sup_{t \in \Bc_{\mf'}} \vert \langle \Thetahat_n, t \rangle \vert^2 - 50 \mu \cdot \frac{\Phi^2 \D_{\mf'} \log(n+2)}{n} \right)_+  \right]  \\
		&\hspace{1em}+ 16 \sum_{\mf' \in \Mc_n} \E \left[ \left( \sup_{t \in \Bc_{\mf'}} \vert \langle \Thetatilde_n, t \rangle \vert^2 - 3 \mu \cdot \frac{\Phi^2 \D_{\mf'}}{n} \right)_+ \right] 
	\end{align*}
	Applying Lemmata~\ref{conc:Theta:tilde} and~\ref{conc:Theta:hat} as in the proof of Theorem~\ref{thm:adaptive:xi:known} then implies
	\begin{equation*}
		\E \Vert \lambdahat_\mfhat - \lambda \Vert^2 \1_\Xi \lesssim \inf_{\mf \in \Mc_n} \max \{ \Vert \lambda_\mf - \lambda \Vert^2, \pen(\mf)  \} + \frac{1}{n}. 
	\end{equation*}
	
	\noindent \emph{Upper bound for $T_2$}: 
	First, take note of the estimate
	\begin{equation*}
		\E \Vert \lambdahat_\mfhat - \lambda \Vert^2 \1_{\Xi^\complement}  \leq 2 \E  \Vert \lambdahat_\mfhat \Vert^2 \1_{\Xi^\complement} + 2 \E \Vert \lambda \Vert^2 \1_{\Xi^\complement}.
	\end{equation*}
	We have
	\begin{equation*}
		\Vert \lambdahat_\mfhat \Vert^2 = \sum_{\eta \in \Ic_\mfhat} \thetahat_\eta^2 \leq \frac{1}{n} \sum_{i=1}^{n} Y_i^2 \sum_{\eta \in \Ic_\mfhat} \phi_\eta^2(X_i) \leq \frac{\Phi^2 \D_\mfhat}{n} \sum_{i=1}^{n} Y_i^2.
	\end{equation*}
	Hence, by means of the Cauchy-Schwarz inequality and the fact that $\D_\mfhat \leq c_\mf n$ due to Assumption~\ref{ass:model:adaptive},
	\begin{align*}
		\E \Vert \lambdahat_\mfhat - \lambda \Vert^2 \1_{\Xi^\complement}  &\leq 2\Phi^2 c_\mf \left( \E \left[ \left( \sum_{i=1}^n Y_i^2\right) ^2\right] \right) ^{1/2} \P (\Xi^\complement)^{1/2}  + 2 \Vert \lambda \Vert^2 \P (\Xi^\complement)\\
		&\leq 2\Phi^2 c_\mf \left( \E \left[ n\sum_{i=1}^n Y_i^4\right]  \right) ^{1/2}  \P (\Xi^\complement)^{1/2}  + 2 \Vert \lambda \Vert^2 \P (\Xi^\complement)\\
		&\leq 2 \Phi^2c_\mf n T_4(\Vert \lambda \Vert_\infty)^{1/2} \P (\Xi^\complement)^{1/2}  + 2 \Vert \lambda \Vert^2 \P (\Xi^\complement)
	\end{align*}
	where $T_4$ is the fourth Touchard polynomial $T_4$.
	By the above estimates it suffices to show that $\P (\Xi^\complement) \lesssim n^{-4}$.
	Note that we have
	\begin{equation}\label{eq:ex:rev:tri}
		\vert \Vert \lambdahat_\Pi \Vert_\infty - \Vert \lambda \Vert_\infty \vert \leq \Vert \lambdahat_\Pi - \lambda_\Pi \Vert_\infty + \Vert \lambda_\Pi - \lambda \Vert_\infty \leq \Vert \lambdahat_\Pi - \lambda \Vert_\infty + \frac{1}{4} \Vert \lambda \Vert_\infty
	\end{equation}
	where the last estimate holds due to Assumption~\ref{ass:Pi1}.
	Putting $\phi_j = \frac{1}{\sqrt{\P(\X_j)}} \1_{\X_j}$ we have
	\begin{align*}
		\Vert \lambdahat_\Pi - \lambda_\Pi \Vert_\infty &= \sup_{1 \leq j \leq M} \Vert (\lambdahat_\Pi - \lambda_\Pi) \1_{\X_j} \Vert_\infty\\
		&= \sup_{1 \leq j \leq M} \P(\X_j)^{-1/2} \Vert (\lambdahat_\Pi - \lambda_\Pi) \1_{\X_j} \Vert\\
		&= \sup_{1 \leq j \leq M} \Vert (\lambdahat_\Pi - \lambda_\Pi) \phi_j \Vert\\
			&\leq \sup_{1 \leq j \leq M} \vert \langle \lambdahat_\Pi - \E[\lambdahat_\Pi | X] ,\phi_j \rangle \vert + \sup_{1 \leq j \leq M} \vert \langle \E[\lambdahat_\Pi | X] - \lambda_\Pi,\phi_j \rangle \vert\\
		&= \sup_{1 \leq j \leq M} \vert \nu(\phi_j) \vert + \sup_{1 \leq j \leq M} \vert \nutilde(\phi_j) \vert
	\end{align*}
	where $\nu(\phi_j) = \langle \lambdahat_\Pi - \E[\lambdahat_\Pi | X] ,\phi_j \rangle$ and $\nutilde(\phi_j) = \langle \E[\lambdahat_\Pi | X] - \lambda_\Pi,\phi_j \rangle$.
	Using~\eqref{eq:ex:rev:tri} and the estimate $\vert a \vee 1 - b \vee 1 \vert \leq \vert a - b\vert$, we obtain
	\begin{align*}
		\P (\Xi^\complement) &= \P ( \vert \Vert \lambdahat_\Pi \Vert_\infty \vee 1 - \Vert \lambda \Vert_\infty \vee 1 \vert \geq 3/4 \cdot (\Vert \lambda \Vert_\infty \vee 1) )\\
		&\leq \P (\Vert \widehat \lambda_\Pi - \lambda_\Pi \Vert_\infty \geq 1/2 \cdot (\Vert \lambda \Vert_\infty \vee 1) )\\
		&\leq \P (\sup_{1 \leq j \leq M} \vert \nu(\phi_j) \vert \geq 1/4 \cdot (\Vert \lambda \Vert_\infty \vee 1)) + \P (\sup_{1 \leq j \leq M} \vert \nutilde(\phi_j) \vert \geq 1/4 \cdot (\Vert \lambda \Vert_\infty \vee 1))\\
		&\leq \sum_{j=1}^{M} \bigg\{ \P(\nu(\phi_j) \geq \xi) + \P(-\nu(\phi_j) \geq \xi)	+ \P(\nutilde(\phi_j) \geq \xi)  + \P(-\nutilde(\phi_j) \geq \xi) \bigg\}
		\end{align*}
	where $\xi = (\Vert \lambda \Vert_\infty \vee 1)/4$.
	Note that $\Vert \phi_j \Vert=1$ and $\Vert \phi_j \Vert_\infty = \P(\X_j)^{-1/2}$.
	By application of Proposition~\ref{PR:prop:bernstein:PPP} we obtain putting $\underbar p = \inf_{j = 1,\ldots,M} \P(\X_j)$
	\begin{align*}
		\P (\pm \nu(\phi_j) \geq \xi) &\leq \exp \left( - \frac{n\xi^2}{2 \Vert \phi_j \Vert_\infty^2 \Vert \lambda \Vert_\infty + 2/3 \xi \Vert \phi_j \Vert_\infty} \right)\\
		&\leq \exp \left( -\frac{1}{4} \left( \frac{n\xi^2}{\Vert \phi_j \Vert_\infty^2 (\Vert \lambda \Vert_\infty \vee 1)} \wedge \frac{3n\xi}{\Vert \phi_j \Vert_\infty} \right)  \right)\\
		&\leq \exp \left(- \frac{n (\Vert \lambda \Vert_\infty \vee 1)}{64} \cdot \underbar p \right).
	\end{align*}
	Analogously, exploiting Proposition~\ref{PR:prop:bernstein}, we get
	\begin{align*}
		\P (\pm \nutilde(\phi_j) \geq \xi) &\leq \exp \left( - \frac{n}{64} \cdot \underbar p \right),
	\end{align*}
	and hence
	\begin{equation*}
		\P (\Xi^\complement) \leq 4 M \exp \left( - \frac{n}{64} \cdot \underbar p \right) 
	\end{equation*}
	Assumption~\ref{ass:Pi2} finally implies $\P (\Xi^\complement) \leq \frac{c_\Pi n}{80 \log n} \cdot n^{-5} \lesssim n^{-4}$.

\section{Proofs of Section~\ref{sec:dependent}}

\subsection{Proof of Proposition~\ref{prop:dep:upper}}

For the proof of Proposition~\ref{prop:dep:upper} we need the following lemma which is inspired by statement~(i) of Theorem~2.1 in~\cite{viennet1997inequalities} (see also Lemma~4.1 in \cite{asin2016adaptive_a} where this lemma was also exploited).

\begin{lemma}\label{lem:var:bound:beta} 
	Let $(X_i)_{i \in \Z}$ be a strictly stationary absolutely regular process with $\beta$-mixing sequence $(\beta_k)_{k \in \N_0}$.
	Then there exists a sequence of measurable functions $b_k: \X \to [0,1]$ with $b_0 \equiv 1$, $0 \leq b_k \leq 1$, $\E b_k(\xi_0) = \beta_k$ such that for any $g \in L^2$ and any $n \in \N$
	\begin{equation*}
		\Var \left( \sum_{i=1}^n g(X_i) \right) \leq 4n \E \left[ \left( \sum_{k=0}^{n} b_k\right)  g^2(X_0)\right].
	\end{equation*} 
\end{lemma}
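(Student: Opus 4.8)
The plan is to follow statement~(i) of Theorem~2.1 in~\cite{viennet1997inequalities} (see also Lemma~4.1 in~\cite{asin2016adaptive_a}) and to split the variance of the sum into a diagonal term and the off‑diagonal covariances. By strict stationarity,
\begin{equation*}
	\Var\Bigl( \sum_{i=1}^n g(X_i) \Bigr) = n\,\Var(g(X_0)) + 2 \sum_{k=1}^{n-1} (n-k)\, \operatorname{Cov}\bigl(g(X_0), g(X_k)\bigr) \leq n\,\E[g^2(X_0)] + 2n \sum_{k=1}^{n-1} \bigl\lvert \operatorname{Cov}\bigl(g(X_0),g(X_k)\bigr) \bigr\rvert .
\end{equation*}
Since the convention $b_0 \equiv 1$ is imposed and $\Var(g(X_0)) \leq \E[g^2(X_0)]$, the diagonal term is already of the desired shape $n\,\E[b_0(X_0)g^2(X_0)]$, so the whole problem reduces to dominating each covariance at lag $k \geq 1$ by a constant multiple of $\E[b_k(X_0)g^2(X_0)]$ with a function $b_k$ that does not depend on $g$.

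For this I would invoke the coupling description of $\beta$‑mixing (Berbee's lemma): for each $k \geq 1$ there is, on an enlargement of the probability space, a random variable $X_k^\ast$ with $X_k^\ast \stackrel{d}{=} X_k$, $X_k^\ast$ independent of $X_0$, and
\begin{equation*}
	\P(X_k \neq X_k^\ast \mid X_0) = b_k(X_0), \qquad b_k(x) := \tfrac12\bigl\lVert \P^{X_k\mid X_0=x} - \P^{X_k} \bigr\rVert_{\mathrm{TV}} \in [0,1];
\end{equation*}
the standard characterisation of $\beta(\sigma(X_0),\sigma(X_k))$ as the $\P^{X_0}$‑average of this conditional total‑variation distance then gives $\E[b_k(X_0)] = \beta_k$. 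Using $X_k^\ast \perp X_0$ and $X_k^\ast \stackrel{d}{=} X_k$ one has $\E[g(X_0)]\E[g(X_k)] = \E[g(X_0)g(X_k^\ast)]$, hence
\begin{equation*}
	\operatorname{Cov}\bigl(g(X_0), g(X_k)\bigr) = \E\bigl[ g(X_0)\bigl(g(X_k) - g(X_k^\ast)\bigr)\1_{\{X_k \neq X_k^\ast\}} \bigr];
\end{equation*}
a Young‑type inequality $\lvert ab\rvert \le \tfrac12(a^2+b^2)$ followed by conditioning on $X_0$ turns the part carrying $g^2(X_0)$ into $\E[b_k(X_0)g^2(X_0)]$ and leaves terms of the type $\E[g^2(X_k)\1_{\{X_k \neq X_k^\ast\}}]$ to be handled. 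This last point is the main obstacle: since $g$ is only assumed to lie in $L^2$ and need not be bounded, crude conditional bounds are not available, and one must use the maximality of the coupling (so that the signed measure $\P^{X_k\mid X_0}-\P^{X_k}$ splits into positive and negative parts, each of mass $b_k(X_0)$) together with a reversal argument exploiting stationarity in order to re‑express this term through $\E[b_k(X_0)g^2(X_0)]$. This is precisely the technical heart taken over from~\cite{viennet1997inequalities}, and it delivers a per‑lag bound $\lvert\operatorname{Cov}(g(X_0),g(X_k))\rvert \leq 2\,\E[b_k(X_0)g^2(X_0)]$ (possibly after enlarging the numerical constant).

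Plugging this back and using $n - k \leq n$ yields
\begin{equation*}
	\Var\Bigl( \sum_{i=1}^n g(X_i) \Bigr) \leq n\,\E[b_0(X_0)g^2(X_0)] + 4n \sum_{k=1}^{n-1} \E[b_k(X_0)g^2(X_0)] \leq 4n\,\E\Bigl[ \Bigl( \sum_{k=0}^{n} b_k \Bigr) g^2(X_0) \Bigr],
\end{equation*}
the last step using $n \leq 4n$ for the diagonal term and adding the nonnegative summand for $k = n$; together with $b_0 \equiv 1$, $0 \le b_k \le 1$ and $\E[b_k(X_0)] = \beta_k$ this is exactly the assertion. Everything except the per‑lag covariance bound is routine bookkeeping with the constants.
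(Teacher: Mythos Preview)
Your overall strategy---expand the variance by stationarity, isolate the diagonal, and bound each lag-$k$ covariance by an expression $\E[b_k(X_0)g^2(X_0)]$ with a $g$-independent function $b_k$---is exactly the route the paper takes. The difference lies in how the per-lag bound is obtained.

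The paper does not attempt to build a single $b_k$ from scratch via Berbee's coupling. Instead it invokes Lemma~4.1 of \cite{viennet1997inequalities} as a black box: for each $k$ there exist \emph{two} functions $b_k',b_k'':\X\to[0,1]$ with $\E b_k'=\E b_k''=\beta_k$ such that
\[
\Cov(g(X_0),g(X_k)) \leq 2\,\E[b_k' g^2]^{1/2}\,\E[b_k'' g^2]^{1/2}.
\]
The arithmetic--geometric mean inequality then gives $\E[b_k' g^2]^{1/2}\E[b_k'' g^2]^{1/2}\le \E[(b_k'+b_k'')g^2/2]$, and the paper sets $b_k:=(b_k'+b_k'')/2$. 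The ``reversal'' difficulty you identify---the term $\E[g^2(X_k)\1_{\{X_k\neq X_k^\ast\}}]$---is precisely what forces the second function: one needs to condition on $X_k$ rather than on $X_0$, and this yields a different conditional total-variation function $b_k''$, not the same $b_k$. Your sketch hides this by saying the technical heart is taken from \cite{viennet1997inequalities}, but with the single $b_k$ you defined (conditional TV of $X_k$ given $X_0$) the bound $\lvert\Cov\rvert\le 2\,\E[b_k(X_0)g^2(X_0)]$ does not follow without producing and then absorbing a companion function. A further small point: your $b_k$ satisfies $\E b_k(X_0)=\beta(\sigma(X_0),\sigma(X_k))\le\beta_k$, not equality with the full past--future $\beta_k$; Viennet's construction is set up so that the equality $\E b_k=\beta_k$ holds.

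In short, your argument is correct once you concede the citation to Viennet, but the paper's proof is shorter and cleaner: quote the two-function product bound, apply AM--GM, define $b_k$ as the average. What the paper's route buys is that the hard analytic step is encapsulated in a single citation and the passage to one function is a one-line algebraic trick; what your route would buy, if carried out in full, is a self-contained derivation---but then you are essentially reproving Viennet's lemma.
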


\begin{proof}[Proof of the Lemma]
	Thanks to Lemma~4.1 in \cite{viennet1997inequalities}, there exist two functions $b_k'$ and $b_k''$ from $\X$ to $[0,1]$ such that $\E b_k' = \E b_k''=\beta_k$ and
	\begin{equation*}
		\Cov(g(X_0), g(X_k)) \leq 2 \E [b_k' g^2]^{1/2} \E [b_k'' g^2]^{1/2}.
	\end{equation*}
	Thus
	\begin{align*}
		\Var \left( \sum_{i=1}^n g(X_i) \right) &\leq 2 \sum_{k=0}^{n} (n-k) \vert\Cov(g(X_0), g(X_k))\vert\\
		&\leq 4n \sum_{k=0}^n \E [b_k' g^2]^{1/2} \E [b_k'' g^2]^{1/2}\\
		&\leq 4n \sum_{k=0}^{n} \E [(b_k' + b_k'')g^2/2]
	\end{align*}
	which finishes the proof by defining $b_k=(b_k' + b_k'')/2$.
\end{proof}

	We come now to the proof of Proposition~\ref{prop:dep:upper}.
	As in the proof of Proposition~\ref{prop:ind:upper}, we use the bias-variance decomposition
	\begin{equation*}
		\E \Vert \lambdahat_\mf - \lambda \Vert^2 = \Vert \lambda_\mf - \lambda \Vert^2 + \E \Vert \lambdahat_\mf - \lambda_\mf \Vert^2.
	\end{equation*}
	For the variance term, we obtain exploiting Assumption~\ref{ass:model} and Lemma~\ref{lem:var:bound:beta}
	\begin{align*}
		\E \Vert \lambdahat_\mf - \lambda_\mf \Vert^2 &= \sum_{\eta \in \Ic_\mf} \Var ( \thetahat_\eta ) = \frac{1}{n^2} \sum_{\eta \in \Ic_\mf} \Var \left( \sum_{i=1}^{n} Y_i \phi_\eta(X_i) \right)\\
		&\hspace{-1em}= \frac{1}{n^2} \sum_{\eta \in \Ic_\mf} \Var \left(\sum_{i=1}^{n} \E [Y_i \phi_\eta(X_i) | X_i]  \right) + \frac{1}{n^2} \sum_{\eta \in \Ic_\mf} \E \left[ \Var \left( \sum_{i=1}^{n} Y_i \phi_\eta(X_i) \Big \vert X_i \right) \right]\\
		&\hspace{-1em}= \frac{1}{n^2} \sum_{\eta \in \Ic_\mf} \Var \left(\sum_{i=1}^{n} \lambda(X_i) \phi_\eta(X_i) \right) + \frac{1}{n^2} \sum_{\eta \in \Ic_\mf} \E \left[ \sum_{i=1}^{n} \phi_\eta^2(X_i) \lambda(X_i) \right]\\
		&\hspace{-1em}\leq \frac{4}{n} \sum_{\eta \in \Ic_\mf} \E \left[ \left( \sum_{k=0}^{n} b_k(X_0) \right) \phi_\eta^2(X_0) \lambda^2(X_0) \right]  + \frac{\Phi^2\D_\mf}{n} \E [\lambda(X_1)]\\
		&\hspace{-1em} \leq 4 \left( \sum_{k=0}^{n} \beta_k\right)  \Vert \lambda \Vert_\infty^2 \Phi^2\cdot \frac{\D_\mf}{n}  + \frac{\D_\mf}{n} \cdot \Phi^2 \Vert \lambda \Vert_1\\
		&= \frac{\Phi^2 \D_\mf}{n} \left[ \Vert \lambda \Vert_1 + 4 \Vert \lambda \Vert_\infty^2 \left( \sum_{k=0}^{n} \beta_k \right) \right].
	\end{align*}

\subsection{Proof of Theorem~\ref{thm:adaptive:dep}}

	We put $q_n = \lceil \sqrt{n/2} \rceil$.
	In the following we assume for the sake of simplicity that $n = 2 p_n q_n$ with $p_n$ being an integer.
	For $\ell = 0, \ldots,p_n-1$ put $A_\ell = (X_{2\ell q_n + 1},\ldots,X_{(2\ell+1)q_n})$ and $B_\ell = (X_{(2\ell +1)q_n+1},\ldots,X_{(2\ell + 2)q_n})$.
	Exploiting a construction given in~\cite{viennet1997inequalities} on the basis of Berpee's coupling lemma, we can create $A_\ell^\ast$ for $\ell = 0,\ldots,p_n-1$ such that
	\begin{itemize}
		\item $A_\ell$ and $A_\ell^\ast$ have the same distribution,
		\item $A_\ell^\ast$ and $A_{\ell'}^\ast$ are independent if $\ell \neq \ell'$, and
		\item $\P (A_\ell \neq A_\ell^\ast) \leq \beta_{q_n}$.
	\end{itemize}
	In the same fashion, one can build $B_\ell^\ast$ for $\ell = 0,\ldots,p_n-1$.
 	We now define the sequence $X_i^\ast$ via $A_\ell^\ast = (X_{2\ell q_n + 1}^\ast,\ldots,X_{(2\ell+1)q_n}^\ast)$ and $B_\ell^\ast = (X_{(2\ell +1)q_n+1}^\ast,\ldots,X_{(2\ell + 2)q_n}^\ast)$ for $\ell = 0,\ldots, p_n-1$, and consider the event $\Xiast = \{ X_i = X_i^\ast \text{ for all } i = 1,\ldots,n \}$.
	In addition consider the event $\Xi$ defined as in the proof of Theorem~\ref{thm:adaptive:xi:unknown}.
	We consider the decomposition
	\begin{align*}
		\E \Vert \lambdahat_\mfhat - \lambda \Vert^2 &= \E \Vert \lambdahat_\mfhat - \lambda \Vert^2 \1_{\Xi \cap \Xiast} + \E \Vert \lambdahat_\mfhat - \lambda \Vert^2 \1_{\Xi^\complement \cap \Xiast} + \E \Vert \lambdahat_\mfhat - \lambda \Vert^2 \1_{\Xiastc}\\
		&\eqdef T_1 + T_2 + T_3,
	\end{align*}
	and bound the three terms separately.
	
	\emph{Upper bound for $T_1$}:
	Following along the lines of the proofs of Theorems~\ref{thm:adaptive:xi:known} and~\ref{thm:adaptive:xi:unknown} one can show
	\begin{align*}
		\E \Vert \lambdahat_{\mfhat} - \lambda \Vert^2 \1_{\Xi \cap \Xiast} &\leq C\min_{\mf \in \Mc_n} \{\Vert \lambda_\mf - \lambda \Vert^2 + \pen(\mf)\}\\
		&+16\sum_{\mf' \in \Mc_n} \E \left[ \left(  \sup_{t \in \Bc_{\mf'}} \vert \langle \Thetahat_n,t \rangle \vert^2 - 50\mu \cdot \frac{\Phi^2\D_{\mf'} \log(n+2)}{n} \right)_+ \right]\\
		&+16\sum_{\mf' \in \Mc_n} \E \left[ \left( \sup_{t \in \Bc_{\mf'}} \vert \langle \Thetatilde_n,t \rangle \vert^2 - 8\Phi^2 \D_{\mf'} \mu \cdot \frac{\sum_{k=0}^\infty \beta_k}{n} \right)_+ \1_{\Xi \cap \Xiast} \right]\\
		&+ 16\sum_{\mf' \in \Mc_n} \bigg( 8 \Phi^2 \D_{\mf'} \mu \frac{\sum_{k=0}^\infty \beta_k}{n} - \frac{\D_{\mf'} \log(n+2)}{8n} \bigg)_+. 
	\end{align*}
	Under the given assumptions on the potential models we obtain for the last term the estimate
	\begin{align*}
		\sum_{\mf' \in \Mc_n} \bigg( 8 \Phi^2 \D_{\mf'} \mu \cdot \frac{\sum_{k=0}^\infty \beta_k}{n} - \frac{\D_{\mf'} \log(n+2)}{8n} \bigg)_+ &\leq \sum_{\substack{\mf' \in \Mc_n \\ \D_{\mf'} \leq m_0}} 8 \Phi^2 \D_{\mf'} \mu \cdot \frac{\sum_{k=0}^\infty \beta_k}{n}\\
		&\leq \sum_{m=1}^{m_0} 8\Phi^2 m \mu \cdot \frac{\sum_{k=0}^\infty \beta_k}{n}\\
		&=4\Phi^2 \mu \left( \sum_{k=0}^{\infty} \beta_k \right) \frac{m_0 (m_0+1)}{n}
	\end{align*}
	where $m_0=m_0(\Phi^2, \mu, (\beta_k)_{k \in \N_0})$ is some non-negative integer that depends on $\mu$, $\Phi$, and the sequence of $\beta$-mixing coefficients.
	The term $E_{\mf' 1} = \E [ ( \sup_{t \in \Bc_\mf'} \vert \langle \Thetahat_n,t \rangle \vert^2 - 50\mu \cdot \frac{\Phi^2\D_{\mf'} \log(n+2)}{n} )_+ ]$ can be bounded exactly as in the proof of Theorem~\ref{thm:adaptive:xi:known} via Lemma~\ref{conc:Theta:hat} (since the investigation is based on conditioning on the covariates $X_1,\ldots,X_n$):
		\begin{align*}
			\E \left[ \left( \sup_{t \in \Bc_{\mf'}} \vert \langle \widehat \Theta_n,t \rangle \vert^2 - 50\mu \cdot\frac{\Phi^2 \D_{\mf'} \log(n+2)}{n} \right)_+ \right] &\leq K_1' \left\lbrace \frac{\D_{\mf'}}{n} \exp(-2 \log(n+2))\right.\\
			&\hspace{1em}+\left. \frac{\D_{\mf'}}{n^2} \exp(-K_2'\sqrt{n}) \right\rbrace.
		\end{align*}
	In order to treat the term $E_{\mf'2} = \E [ ( \sup_{t \in \Bc_{\mf'}} \vert \langle \Thetatilde_n,t \rangle \vert^2 - 8 \Phi^2 \D_{\mf'} \mu \cdot \frac{\sum_{k=0}^\infty \beta_k}{n} )_+ \1_{\Xi \cap \Xiast} ]$ we use the decomposition $\langle \Thetatilde_n,t \rangle \1_{\Xi \cap \Xiast} = ( \nutilde^\ast_1(t) +  \nutilde^\ast_2(t))/2 \cdot \1_{\Xi \cap \Xiast}$ where
	\begin{align*}
		\nutilde^\ast_1(t) &= \frac{1}{p_n} \sum_{\ell = 0}^{p_n - 1} \frac{1}{q_n} \sum_{i=2\ell q_n + 1}^{(2\ell+1)q_n} \{ r_t(X_i^\ast) - \E[r_t(X_i^\ast)]\},\\
		\nutilde^\ast_2(t) &= \frac{1}{p_n} \sum_{\ell = 0}^{p_n - 1} \frac{1}{q_n} \sum_{i=(2l+1)q_n + 1}^{(2l+2)q_n} \{ r_t(X_i^\ast) - \E[r_t(X_i^\ast)]\},
	\end{align*}
	and $r_t$ is defined as in the proof of Lemma~\ref{conc:Theta:tilde} (note that it is possible to replace $X_i$ with $X_i^\ast$ by the definition of $\Xiast$).
	By Lemma~\ref{lem:conc:nutilde} we have for $i= 1,2$
\begin{equation*}
	\E \left[ \left( \sup_{t \in \Bc_{\mf'}} \vert \nutilde^\ast_i(t) \vert^2 - \frac{8\Phi^2 \D_{\mf'} \mu \sum_{k=0}^{\infty} \beta_k}{n} \right)_+ \right] \leq c_1 \left\lbrace \frac{\D_{\mf'}}{n} \exp(-c_2q_n) + \frac{\D_{\mf'}}{p_n^2} \exp \left( - c_3 \sqrt{n} \right) \right\rbrace.
\end{equation*}
Hence,\begin{align*}
		E_{\mf' 2} &\leq \frac{1}{2} \E \left[ \left( \sup_{t \in \Bc_{\mf'}} \vert \nutilde^\ast_1(t) \vert^2 - \frac{8\Phi^2 \D_{\mf'} \mu \sum_{k=0}^{\infty} \beta_k}{n} \right)_+ \right]\\
		&\hspace{1em}+ \frac{1}{2} \E \left[ \left( \sup_{t \in \Bc_{\mf'}} \vert \nutilde_2^\ast(t) \vert^2 - \frac{8\Phi^2 \D_{\mf'} \mu \sum_{k=0}^{\infty} \beta_k}{n} \right)_+ \right]\\
		&\lesssim \frac{\D_{\mf'}}{n} \cdot \exp(-c_2q_n) + \frac{\D_{\mf'}}{p_n^2} \exp \left( - c_3 \sqrt{n} \right),
	\end{align*}
and therefore
	\begin{align*}
		\E \Vert \lambdahat_{\mfhat} - \lambda \Vert^2 \1_{\Xi \cap \Xiast} &\lesssim \min_{\mf \in \Mc_n} \{\Vert \lambda_\mf - \lambda \Vert^2 + \pen(\mf)\}\\
		&\hspace{1em}+ \sum_{\mf' \in \Mc_n} \left[ \frac{\D_{\mf'}}{n} \exp(-2 \log(n+2)) + \frac{\D_{\mf'}}{n^2} \exp(-K_2'\sqrt{n})\right] \\
		&\hspace{1em}+ \sum_{\mf' \in \Mc_n} \left[  \frac{\D_{\mf'}}{n} \cdot \exp(-c_2q_n) + \frac{\D_{\mf'}}{p_n^2} \exp \left( - c_3 \sqrt{n} \right) \right] + \frac{1}{n}.
	\end{align*}
	Finally, by exploiting Assumption~\ref{ass:model:adaptive} on the class of models and the definition of $q_n$ we obtain
	\begin{equation*}
		\E \Vert \lambdahat_{\mfhat} - \lambda \Vert^2 \1_{\Xi \cap \Xiast} \lesssim  \min_{\mf \in \Mc_n} \{ \Vert \lambda_\mf - \lambda \Vert^2 + \pen(\mf) \} + \frac{1}{n}.
	\end{equation*}
	
	\emph{Upper bound for $T_2$}:
	As in the proof of Theorem~\ref{thm:adaptive:xi:unknown} one derives the estimate
	\begin{equation*}
		 \E \Vert \lambdahat_\mfhat - \lambda \Vert^2 \1_{\Xi^\complement \cap \Xiast} \leq 2 \Phi^2c_\mf n T_4(\Vert \lambda \Vert_\infty)^{1/2} \P (\Xi^\complement \cap \Xi^\ast)^{1/2}  + 2 \Vert \lambda \Vert^2 \P (\Xi^\complement \cap \Xi^\ast).
	\end{equation*}
	Grant to this estimate it suffices to show $\P (\Xi^\complement \cap \Xi^\ast) \lesssim n^{-4}$.
	Proceeding as in the proof of Theorem~\ref{thm:adaptive:xi:unknown}, we obtain
	\begin{align*}
		\P (\Xi^\complement \cap \Xi^\ast) &\leq \sum_{j=1}^{M} \P( \{\nu_n(\phi_j) \geq \xi\} \cap \Xiast) + \P(\{-\nu_n(\phi_j) \geq \xi\} \cap \Xiast)\\
		&\hspace{1em}+ \P(\{\nutilde_n(\phi_j) \geq \xi\} \cap \Xiast)  + \P(\{-\nutilde_n(\phi_j) \geq \xi\} \cap \Xiast)
	\end{align*}
	where $\nu_n(\phi_j)$ and $\nutilde_n(\phi_j)$ are defined as in the proof of Theorem~\ref{thm:adaptive:xi:unknown}.
	The probabilities $\P(\{\pm \nu_n(\phi_j) \geq \xi \} \cap \Xi^\ast)$ can be handled with exactly as in the proof of Theorem~\ref{thm:adaptive:xi:unknown} by conditioning on the covariates $X_i$.
	In order to bound the terms $\P(\{ \pm \nutilde_n(\phi_j) \geq \xi \} \cap \Xi^\ast)$, we consider the $\nutilde_n^\ast(\phi_j)$ defined exactly as $\nutilde_n(\phi_j)$ with $X_i$ replaced with $X_i^\ast$.
	Note that by construction $\P(\{\pm \nutilde_n(\phi_j) \geq \xi \} \cap \Xi^\ast) = \P(\{\pm \nutilde_n^\ast(\phi_j) \geq \xi \} \cap \Xi^\ast)$.
	We have the decomposition $\nutilde^\ast_n(\phi_j) = \frac{1}{2} \nutilde^\ast_{1n}(\phi_j) + \frac{1}{2} \nutilde^\ast_{2n}(\phi_j)$ where
	\begin{align*}
		\nutilde^\ast_{1n}(\phi_j) &= \frac{1}{p_n} \sum_{\ell = 0}^{p_n-1} \frac{1}{q_n} \sum_{i=2\ell q_n+1}^{(2\ell+1)q_n} \lambda(X_i^\ast)\phi_j(X_i^\ast) - \int_\X \lambda(x)\phi_j(x) \P(dx), \\
		\nutilde^\ast_{2n}(\phi_j) &= \frac{1}{p_n} \sum_{\ell = 0}^{p_n-1} \frac{1}{q_n} \sum_{i=(2\ell+1)q_n +1}^{(2\ell+2)q_n} \lambda(X_i^\ast)\phi_j(X_i^\ast) - \int_\X \lambda(x)\phi_j(x) \P(dx).
	\end{align*}
	Note that the separate summands of the outer sum are independent.
	Hence, applying Bernstein's inequality (see Lemma~\ref{PR:prop:bernstein}) yields for $i = 1,2$ that
	\begin{equation*}
		\P (\{\pm \nutilde^\ast_{in}(\phi_j) \geq \xi\} \cap \Xiast) \leq \exp \left( - \frac{c_\Pi p_n}{64M} \right) 
	\end{equation*}
	which implies under the stated assumptions that
	\begin{equation*}
		\P (\Xi^\complement \cap \Xi^\ast) \leq 4M \exp \left( - \frac{c_\Pi p_n}{64M} \right) \lesssim n^{-4}.
	\end{equation*}
	
	\emph{Upper bound for $T_3$}:
	Following along the lines of the proof of the upper bound for term $T_2$ in the proof of Theorem~\ref{thm:adaptive:xi:unknown} one can show
	\begin{equation*}
		\E  \Vert \lambdahat_\mfhat - \lambda \Vert^2 \1_{\Xi^{\ast\complement}} \leq 2\Phi^2c_\mf n T_4(\Vert \lambda \Vert_\infty) ^{1/2}\P(\Xi^{\ast\complement})^{1/2} + 2 \Vert \lambda \Vert^2 \P (\Xi^{\ast\complement})
	\end{equation*}
	with $T_4$ being the fourth Touchard polynomial,
	and it suffices again to show that $\P (\Xi^{\ast\complement}) \lesssim n^{-4}$.
	It holds
	\begin{equation*}
		\P(\Xi^{\ast\complement}) \leq 2p_n\beta_{q_n} = 2n q_n^{-1} \beta_{q_n} \lesssim n^{-4}
	\end{equation*}
	where the last estimate holds due to Assumption~\ref{ass:ex:lacour} both in the geometric and the arithmetic case.

\section{Technical lemmata}

\begin{lemma}\label{conc:Theta:tilde}
	For all $\mf \in \Mc_n$, we have
	\begin{equation*}
	\E \left[  \left( \sup_{t \in \Bc_{\mf}} \vert \langle \widetilde \Theta_n,t \rangle \vert^2 - 3 \mu \cdot \frac{\Phi^2 \D_\mf}{n} \right)_+ \right] \leq K_1 \left\lbrace \frac{1}{n} \exp(-K_2 \D_\mf) + \frac{\D_\mf}{n^2} \exp(-K_3 \sqrt n) \right\rbrace 
	\end{equation*}
	with strictly positive numerical constants $K_1$, $K_2$, and $K_3$.
	\end{lemma}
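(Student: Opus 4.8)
The plan is to recognise $Z_\mf:=\sup_{t\in\Bc_\mf}\vert\langle\widetilde\Theta_n,t\rangle\vert$ as the supremum of a \emph{bounded} empirical process and to apply a Talagrand-type concentration inequality. Indeed, for $t=\sum_{\eta\in\Ic_\mf}t_\eta\phi_\eta\in\Sc_\mf$, the nestedness $\Ic_\mf\subseteq\Ic_n$ and the orthonormality of $\{\phi_\eta\}$ give
\[
  \langle\widetilde\Theta_n,t\rangle=\sum_{\eta\in\Ic_\mf}(\thetatilde_\eta-\theta_\eta)t_\eta=\nu_n(r_t),\qquad r_t:=\lambda t,\quad \nu_n(f):=\tfrac1n\sum_{i=1}^n\{f(X_i)-\E f(X_1)\},
\]
so that $Z_\mf$ is the supremum of $\vert\nu_n\vert$ over the class $\{r_t:t\in\Bc_\mf\}$, which is separable because $\Sc_\mf$ is finite dimensional, and $Z_\mf^2=\sum_{\eta\in\Ic_\mf}(\thetatilde_\eta-\theta_\eta)^2$ by Cauchy--Schwarz. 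I would then invoke the Talagrand-type inequality in its bounded-process form that controls $\E[(Z_\mf^2-cH^2)_+]$ directly by a sum of two exponential tails (Bousquet's inequality followed by the standard integration-of-tails argument, as in~\cite{chagny2013estimation}, Proposition~2.2).

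First I would estimate the three parameters entering that inequality. For the mean term, combining the identity above with the variance computation from the proof of Proposition~\ref{prop:ind:upper} (with $Y_i$ replaced by its conditional mean $\lambda(X_i)$) gives
\[
  \E[Z_\mf^2]=\sum_{\eta\in\Ic_\mf}\Var(\thetatilde_\eta)\le\frac1n\Bigl\|\sum_{\eta\in\Ic_\mf}\phi_\eta^2\Bigr\|_\infty\Vert\lambda\Vert_\infty^2\le\frac{\mu\,\Phi^2\D_\mf}{n}=:H^2,
\]
where I used the reformulation of Assumption~\ref{ass:model} from the Remark after it and $\mu=1\vee\xi^2\ge\Vert\lambda\Vert_\infty^2$; hence $\E Z_\mf\le H$ by Jensen. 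For $t\in\Bc_\mf$, Assumption~\ref{ass:model} yields $\Vert r_t\Vert_\infty\le\Vert\lambda\Vert_\infty\Phi\sqrt{\D_\mf}$, so the centred process has uniform bound $b:=2\Vert\lambda\Vert_\infty\Phi\sqrt{\D_\mf}$, and its weak variance satisfies $\sup_{t\in\Bc_\mf}\Var(r_t(X_1))\le\Vert\lambda\Vert_\infty^2\sup_{t\in\Bc_\mf}\Vert t\Vert^2\le\mu=:v$.

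Feeding $H$, $v$ and $b$ into the Talagrand consequence with the slack chosen so that the centering level is $3H^2=3\mu\Phi^2\D_\mf/n$ then produces a bound of the form
\[
  \E\bigl[(Z_\mf^2-3\mu\Phi^2\D_\mf/n)_+\bigr]\lesssim\frac{v}{n}\exp\Bigl(-c_1\tfrac{nH^2}{v}\Bigr)+\frac{b^2}{n^2}\exp\Bigl(-c_2\tfrac{nH}{b}\Bigr).
\]
Since $nH^2/v=\Phi^2\D_\mf$, $nH/b=\tfrac12\sqrt n\,\bigl(\sqrt\mu/\Vert\lambda\Vert_\infty\bigr)\ge\tfrac12\sqrt n$, $v/n=\mu/n$ and $b^2/n^2=4\Vert\lambda\Vert_\infty^2\Phi^2\D_\mf/n^2\lesssim\D_\mf/n^2$, absorbing the fixed quantities $\Phi$ and $\mu$ (which depend neither on $n$ nor on $\mf$) into the constants yields exactly the asserted inequality with some $K_1,K_2,K_3>0$.

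The two parameter estimates and the separability check are routine; I expect the only delicate point to be the bookkeeping of constants. One has to pick the slack in the particular Talagrand consequence so that the \emph{fixed} centering level $3\mu\Phi^2\D_\mf/n$ genuinely dominates the multiple of $\E[Z_\mf^2]$ demanded by the inequality, and one must check that the residual deviation $x\asymp H$ entering the sub-Gaussian and the sub-exponential regimes yields exponents that are, respectively, linear in $\D_\mf$ and in $\sqrt n$, with no spurious $n$-dependent prefactor surviving.
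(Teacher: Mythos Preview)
Your proposal is correct and follows essentially the same route as the paper: both identify $\langle\widetilde\Theta_n,t\rangle$ as the centred empirical process $\nu_n(r_t)$ with $r_t=\lambda t$, compute the same three ingredients $H^2=\mu\Phi^2\D_\mf/n$, $\upsilon=\mu$, and $M_1\asymp(\mu\Phi^2\D_\mf)^{1/2}$, and apply the Talagrand consequence (the paper's Lemma~\ref{PR:lem:ex:talagrand}, which is precisely the result from~\cite{chagny2013estimation} you invoke) with slack $\epsilon=\tfrac14$ so that $c(\epsilon)=3$. The only cosmetic difference is your extra factor~$2$ in the uniform bound~$b$, which is harmless since the lemma only requires $\sup_t\Vert r_t\Vert_\infty\le M_1$ rather than a bound on the centred version.
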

	
	\begin{proof}
		With $t \in \Bc_\mf$, we associate the function
		\begin{equation*}
		r_t(x) \defeq \sum_{\eta \in \Ic_\mf} \tau_\eta \lambda(x) \phi_\eta(x)
		\end{equation*}
		where the $\tau_\eta = \int_{\X} \phi_\eta(x) t(x)\P(dx)$ denote for $\eta \in \Ic_\mf$ the coefficients of the function $t$ in terms of the basis given by the $\phi_\eta$.
		Evidently, we have $\E [r_t(X)] = \sum_{\eta \in \Ic_\mf} \tau_\eta \theta_\eta$.
		Consequently, one has the identity
		\begin{equation*}
		\langle \widetilde \Theta_n, t \rangle = \frac{1}{n} \sum_{i=1}^{n} r_t(X_i) - \E [r_t(X_i)],
		\end{equation*}
		and $\langle \widetilde \Theta_n, t \rangle$ will take the role of $\nu_n$ in Lemma~\ref{PR:lem:ex:talagrand}.
		We now check the preconditions concerning the existence of suitable constants $M_1$, $H$ and $\upsilon$ in the statement of Lemma~\ref{PR:lem:ex:talagrand}.
		
		\noindent \emph{Condition concerning $M_1$:} We have
		\begin{align*}
		\sup_{t \in \Bc_\mf} \Vert r_t \Vert_\infty^2 = \sup_{t \in \Bc_\mf} \sup_{y \in [0,1]} \vert r_t(y) \vert^2 &\leq \sup_{t \in \Bc_\mf} \sup_{y \in [0,1]} \left( \sum_{\eta \in \Ic_\mf} \tau_\eta^2 \right) \left( \sum_{\eta \in \Ic_\mf} \lambda^2(y) \cdot \phi^2_\eta(y) \right) \\
		&\leq \Vert \lambda \Vert_\infty^2 \Phi^2 \D_\mf \leq \mu  \Phi^2 \D_\mf,
		\end{align*}
		and we can put $M_1 \defeq ( \mu \Phi^2 \D_\mf )^{1/2}$.
		
		\noindent \emph{Condition concerning $H$:} We have 
		\begin{align*}
		\E [ \sup_{t \in \Bc_\mf} \vert \langle \widetilde \Theta_n,t \rangle \vert^2] &\leq \frac{1}{n^2} \E \left[ \sup_{t \in \Bc_\mf} \left( \sum_{\eta \in \Ic_\mf} \tau_\eta^2 \right) \left( \sum_{\eta \in \Ic_\mf} \left| \sum_{i=1}^{n} \left\lbrace \phi_\eta(X_i) \lambda(X_i) - \theta_\eta \right\rbrace \right|^2 \right) \right] \\
		&= \frac{1}{n} \sum_{\eta \in \Ic_\mf} \Var \left( \phi_\eta(X_1) \lambda(X_1) \right) \leq \frac{1}{n} \sum_{\eta \in \Ic_\mf} \E [ \left( \phi_\eta(X_1) \lambda(X_1) \right)^2 ] \\
		&\leq \frac{\Phi^2 \D_\mf}{n} \cdot \Vert \lambda \Vert^2_\infty \leq \frac{\mu  \Phi^2 \D_\mf}{n},
		\end{align*}
		and thus by Jensen's inequality we can put $H \defeq \left( \frac{\mu \Phi^2 \D_\mf}{n} \right)^{1/2}$.
		
		\noindent \emph{Condition concerning $\upsilon$:} For arbitrary $t \in \Bc_\mf$, it holds
		\begin{align*} 
		\Var \left( r_t(X) \right) = \Var \left( \sum_{\eta \in \Ic_\mf} \tau_\eta \lambda(X) \phi_\eta(X) \right)
		&\leq \E \left[ \left( \sum_{\eta \in \Ic_\mf} \tau_\eta \lambda(X)\phi_\eta(X) \right)^2 \right] \leq \mu.
		\end{align*}
		Thus, we can take $\upsilon \defeq \mu$ and the statement of the lemma follows now by applying Lemma~\ref{PR:lem:ex:talagrand} with $\epsilon = \frac{1}{4}$.
		\end{proof}
			
\begin{lemma}\label{conc:Theta:hat}
With the notation from the proof of Theorem~\ref{thm:adaptive:xi:known} it holds for all $\mf \in \Mc_n$
\begin{align*}
\E \left[ \left( \sup_{t \in \Bc_{\mf}} \vert \langle \widehat \Theta_n,t \rangle \vert^2 - 50\mu \cdot\frac{\Phi^2 \D_\mf \log(n+2)}{n} \right)_+ \right] &\leq K_1' \left\lbrace \frac{\D_\mf}{n} \exp(-2 \log(n+2))\right.\\
&\hspace{1em}+\left. \frac{\D_\mf}{n^2} \exp(-K_2'\sqrt{n}) \right\rbrace
\end{align*}
with strictly positive numerical constants $K_1'$ and $K_2'$.
\end{lemma}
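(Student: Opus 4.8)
The plan is to prove the bound by conditioning on the covariates and reducing to a Talagrand-type concentration inequality for compensated Poisson functionals, in close parallel to the proof of Lemma~\ref{conc:Theta:tilde}. Fix $\mf\in\Mc_n$ and write $\boldsymbol X=(X_1,\ldots,X_n)$. Since $\widehat\Theta_n=\lambdahat_n-\lambdatilde_n$ has $\eta$-th coefficient $\thetahat_\eta-\thetatilde_\eta=\frac1n\sum_{i=1}^n(Y_i-\lambda(X_i))\phi_\eta(X_i)$ and every $t\in\Bc_\mf$ lies in $\Sc_\mf\subseteq\Sc_n$ by Assumption~\ref{ass:model:adaptive}, one has the identity
\begin{equation*}
	\langle\widehat\Theta_n,t\rangle=\frac1n\sum_{i=1}^n\bigl(Y_i-\lambda(X_i)\bigr)\,t(X_i),\qquad t\in\Bc_\mf .
\end{equation*}
Conditionally on $\boldsymbol X$ the variables $Y_1,\ldots,Y_n$ are independent with $Y_i\sim\mathrm{Poisson}(\lambda(X_i))$, so, representing each $Y_i$ as the number of points of an independent standard Poisson process on $[0,\infty)$ falling in $[0,\lambda(X_i)]$, the process $t\mapsto\langle\widehat\Theta_n,t\rangle$ becomes a centred functional of a Poisson point process, to which the Poissonian counterpart of the Talagrand consequence used for $\widetilde\Theta_n$ (our Lemma~\ref{lem:conc:cox}) applies conditionally on $\boldsymbol X$.

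Next I would verify the three quantities entering Lemma~\ref{lem:conc:cox}, obtaining bounds that hold \emph{surely} in $\boldsymbol X$ (so that no favourable event on the covariates is needed). The contribution of a single Poisson point to $\langle\widehat\Theta_n,t\rangle$ is $\frac1n t(X_i)$, so the sup-norm parameter $M_1$ is controlled by $\frac1n\sup_{t\in\Bc_\mf}\Vert t\Vert_\infty\le\frac{\Phi\sqrt{\D_\mf}}{n}$ via Assumption~\ref{ass:model}. The weak variance satisfies $\sup_{t\in\Bc_\mf}\frac1{n^2}\sum_{i=1}^n t^2(X_i)\lambda(X_i)\le\frac{\Vert\lambda\Vert_\infty}{n^2}\cdot n\,\Phi^2\D_\mf\le\frac{\mu\Phi^2\D_\mf}{n}$, using $\Vert t\Vert_\infty^2\le\Phi^2\D_\mf$ and $\Vert\lambda\Vert_\infty^2\le\mu$. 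Finally, because $\sup_{t\in\Bc_\mf}\vert\langle\widehat\Theta_n,t\rangle\vert^2=\sum_{\eta\in\Ic_\mf}(\thetahat_\eta-\thetatilde_\eta)^2$,
\begin{equation*}
	\E\Bigl[\sup_{t\in\Bc_\mf}\vert\langle\widehat\Theta_n,t\rangle\vert^2\,\Big|\,\boldsymbol X\Bigr]
	=\frac1{n^2}\sum_{i=1}^n\lambda(X_i)\sum_{\eta\in\Ic_\mf}\phi_\eta^2(X_i)
	\le\frac{\Phi^2\D_\mf}{n^2}\sum_{i=1}^n\lambda(X_i)\le\frac{\mu\Phi^2\D_\mf}{n},
\end{equation*}
where the first inequality is the Viennet form of Assumption~\ref{ass:model} and the last uses $\lambda(X_i)\le\Vert\lambda\Vert_\infty\le\sqrt\mu$; by Jensen this also bounds the expected supremum $H$. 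Thus $H^2$ and the weak variance are both $\le\mu\Phi^2\D_\mf/n$, while $M_1\le\Phi\sqrt{\D_\mf}/n$, and all of these hold for every realisation of $\boldsymbol X$.

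The final step is to apply Lemma~\ref{lem:conc:cox} conditionally on $\boldsymbol X$. The genuinely new feature compared with Lemma~\ref{conc:Theta:tilde} — and the source of the extra logarithmic factor — is that here the expected-supremum term is of the same order $\mu\Phi^2\D_\mf/n$ as the weak variance, so subtracting merely a constant multiple of it would leave a term of order $\mu\Phi^2\D_\mf/n$, which is not summable over the $\le c_\Mc n$ models in $\Mc_n$. One therefore pushes the deviation level up to order $\log(n+2)$ in the sub-gamma (Poisson) tail: from $\P(\sup_{t\in\Bc_\mf}\vert\langle\widehat\Theta_n,t\rangle\vert\ge(1+\epsilon)H+\kappa_1\sqrt{\upsilon x}+\kappa_2(\epsilon)M_1 x\mid\boldsymbol X)\le 2e^{-x}$, integrating the positive part from $x=2\log(n+2)$ onward and using $H^2+\upsilon\log(n+2)+M_1^2\log^2(n+2)\lesssim\mu\Phi^2\D_\mf\log(n+2)/n$ (valid since $\log(n+2)\le n$) together with $\sqrt{\mu\Phi^2\D_\mf\log(n+2)/n}/M_1\gtrsim\sqrt n$, one obtains, for every $\boldsymbol X$,
\begin{equation*}
	\E\Bigl[\Bigl(\sup_{t\in\Bc_\mf}\vert\langle\widehat\Theta_n,t\rangle\vert^2-50\mu\cdot\tfrac{\Phi^2\D_\mf\log(n+2)}{n}\Bigr)_+\,\Big|\,\boldsymbol X\Bigr]
	\lesssim\frac{\Phi^2\D_\mf}{n}e^{-2\log(n+2)}+\frac{\Phi^2\D_\mf}{n^2}e^{-K_2'\sqrt n},
\end{equation*}
the two terms coming from the Gaussian-type and the sub-exponential regime of the deviation bound. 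Since the right-hand side is deterministic, taking expectation over $\boldsymbol X$ yields the claim, with the $\mu$- and $\Phi$-dependent factors absorbed into $K_1'$ exactly as the $\mu\Phi^2$ factors are absorbed in Lemma~\ref{conc:Theta:tilde}. I expect the main obstacle to be precisely this last bookkeeping: checking that the threshold produced by the choice $x\asymp\log(n+2)$ is dominated by $50\mu\Phi^2\D_\mf\log(n+2)/n$ and that the residual tail integral splits into exactly the two exponential terms $e^{-2\log(n+2)}$ and $e^{-K_2'\sqrt n}$ claimed; the other delicate point is setting up the conditional Poisson-point-process representation so that Lemma~\ref{lem:conc:cox} is literally applicable.
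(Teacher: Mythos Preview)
Your proposal is correct and follows the same route as the paper: condition on $\boldsymbol X$, represent $\langle\widehat\Theta_n,t\rangle$ as a centred Poisson functional, verify the three quantities $M_1$, $H$, $\upsilon$, invoke the Poissonian Talagrand consequence (Lemma~\ref{lem:conc:cox}), and then take the expectation over the deterministic right-hand side. The paper's only streamlining relative to your sketch is that it directly enlarges $H$ to $(\mu\Phi^2\D_\mf\log(n+2)/n)^{1/2}$ and applies Lemma~\ref{lem:conc:cox} with $\epsilon=12$, which immediately produces the threshold $c(\epsilon)H^2=50\mu\Phi^2\D_\mf\log(n+2)/n$ and the exponent $c_2\epsilon\,nH^2/\upsilon=2\log(n+2)$ without the separate tail-integration bookkeeping you outline.
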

				
\begin{proof}
Given $\boldsymbol X = (X_1,\ldots,X_n)$, we can write $Y_i$ as $\int_{0}^{1} dN_i(s)$ where $N_i$ is a Poisson process with homogeneous intensity equal to $\lambda(X_i)$.
Thus, conditional on $\boldsymbol X$, it holds
\begin{align*}
\langle \widehat \Theta_n , t \rangle &= \frac{1}{n} \sum_{\eta \in \Ic_\mf} \tau_\eta \sum_{i=1}^{n}\left\lbrace \int_0^1 \phi_\eta(X_i) dN_i(s) - \phi_\eta(X_i) \cdot \lambda(X_i) \right\rbrace\\
&= \frac{1}{n} \sum_{i=1}^{n} \left\lbrace \int_0^1 r_t(s)dN_i(s) - \int_0^1 r_t(s) \lambda(X_i)ds \right\rbrace 
\end{align*}
where $r_t$ is the function given by $r_t(s) \defeq \sum_{\eta \in \Ic_\mf} \tau_\eta \phi_\eta(X_i)$ (note that, given $\boldsymbol X$, this is a constant function).
We now check the preconditions concerning the existence of suitable constants $M_1$, $H$ and $\upsilon$ from Lemma~\ref{lem:conc:cox}.
					
\noindent \emph{Condition concerning $M_1$}: We have
\begin{align*}
\sup_{t \in \Bc_\mf} \Vert r_t \Vert_\infty^2 = \sup_{t \in \Bc_\mf} \left(  \sum_{\eta \in \Ic_\mf} \tau_\eta \phi_\eta(X_i) \right) ^2 &\leq \sup_{t \in \Bc_\mf} \left(  \sum_{\eta \in \Ic_\mf} \tau_\eta^2 \right)  \cdot \left(  \sum_{\eta \in \Ic_\mf} \phi^2_\eta(X_i) \right)  \leq \Phi^2 \D_\mf,
\end{align*}
and we can take $M_1 = \sqrt{\mu \cdot \Phi^2 \D_\mf}$.
					
\noindent \emph{Condition concerning $H$}: It holds
\begin{align*}
\E [ \sup_{t \in \Bc_\mf} \vert \langle \widehat \Theta_n , t \rangle \vert^2 | \boldsymbol X ]&\\
&\hspace{-4em}\leq \sup_{t \in \Bc_\mf} \left( \sum_{\eta \in \Ic_\mf} \tau_\eta^2 \right)  \E \left[  \sum_{\eta \in \Ic_\mf} \Big\vert \frac{1}{n} \sum_{i=1}^n \left\lbrace  \int_{0}^{1} \phi_\eta(X_i)[dN_i(s)- \lambda(X_i)ds] \right\rbrace \Big\vert^2 \Bigg \vert \boldsymbol X \right] \\
&\hspace{-4em} \leq \frac{1}{n} \sum_{\eta \in \Ic_\mf} \Var \left( \int_{0}^{1} \phi_\eta(X_1)dN_1(s) \Big\vert X_1 \right)\\
&\hspace{-4em}=\frac{1}{n}\sum_{\eta \in \Ic_\mf} \int_0^1 \phi_\eta^2(X_1) \lambda(X_1) ds\\
&\hspace{-4em}\leq \frac{\Phi^2 \D_\mf}{n} \cdot \Vert \lambda \Vert_\infty \leq \frac{\Phi^2 \D_\mf}{n} \cdot \mu.   
\end{align*}
Thus, we can put $H \defeq \left( \frac{\Phi^2 \D_\mf \mu \log(n+2)}{n}  \right)^{1/2}$ (the additional enlargement of $H$ by the logarithmic term is needed in the proof of Theorem~\ref{thm:adaptive:xi:known}).
					
\noindent \emph{Condition concerning $\upsilon$}: For arbitrary $\mf \in \Mc_n$ and $t \in \Bc_\mf$ it holds
\begin{align*}
\Var \left( \int_0^1 r_t(s)dN_k(s) | X_k\right) &= \int_0^1 \vert r_t(s) \vert^2 \lambda(X_k) ds \leq \Vert \lambda \Vert_\infty \cdot \Vert r_t \Vert_\infty^2 \leq \Vert \lambda \Vert_\infty \Phi^2 \D_\mf,
\end{align*}
and we can put $\upsilon \defeq \mu \Phi^2 \D_\mf$.
					
We can apply Lemma~\ref{lem:conc:cox} with $\epsilon=12$ which yields
\begin{align*}
	\E \left[ \left( \sup_{t \in \Bc_{\mf}} \vert \langle \widehat \Theta_n,t \rangle \vert^2 - 50 \mu\cdot \frac{\Phi^2  \D_\mf \log(n+2)}{n} \right)_+ \Bigg \vert \boldsymbol X \right] &\leq \\ &\hspace{-18em} K_1' \left\lbrace \frac{\Phi^2\D_\mf \mu}{n} \exp(-2 \log(n+2)) + \frac{\Phi^2 \D_\mf \mu}{n^2} \exp(-K_2' \sqrt{n \log(n+2)}) \right\rbrace.
\end{align*}
Since the right-hand side of the last estimate does not depend on $\boldsymbol X$, taking expectations on both sides implies the assertion of the lemma.
\end{proof}
					
\begin{lemma}\label{lem:conc:nutilde}
	For $\nutilde^\ast_i(\cdot)$ defined as in the proof of Theorem~\ref{thm:adaptive:dep} it holds for $i=1,2$ that
	\begin{equation*}
	\E \left[ \sup_{t \in \Bc_\mf} \left( \vert \nutilde^\ast_i(t) \vert^2 - \frac{8\Phi^2 \D_\mf \mu (\sum_{k=0}^{\infty} \beta_k)}{n} \right)_+ \right] \leq c_1 \left\lbrace \frac{\D_\mf}{n}  \exp(-c_2q_n) + \frac{\D_\mf}{p_n^2} \exp \left( - c_3 \sqrt{n} \right) \right\rbrace
	\end{equation*}
	with strictly positive numerical constants $c_1$, $c_2$, and $c_3$.
\end{lemma}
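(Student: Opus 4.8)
\emph{Strategy.} The plan is to run the Talagrand-type argument from the proof of Lemma~\ref{conc:Theta:tilde} verbatim, but at the level of the $p_n$ independent blocks produced by the coupling construction in the proof of Theorem~\ref{thm:adaptive:dep}. Fix $i\in\{1,2\}$. Writing, as in that proof, $r_t(x)=\sum_{\eta\in\Ic_\mf}\tau_\eta\lambda(x)\phi_\eta(x)$ with $\tau_\eta=\langle t,\phi_\eta\rangle$, we have $\nutilde^\ast_i(t)=\frac{1}{p_n}\sum_{\ell=0}^{p_n-1}\{Z_\ell(t)-\E Z_\ell(t)\}$, where $Z_\ell(t)=\frac{1}{q_n}\sum_{j}r_t(X_j^\ast)$ is the average of $r_t$ over the $\ell$-th block (the $A_\ell^\ast$ for $i=1$, the $B_\ell^\ast$ for $i=2$). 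By construction the centred variables $Z_\ell(t)-\E Z_\ell(t)$, $\ell=0,\dots,p_n-1$, are independent, each with the law of $\frac{1}{q_n}\sum_{j=1}^{q_n}r_t(X_j)-\E r_t(X_1)$; hence $t\mapsto\nutilde^\ast_i(t)$ is an empirical process of exactly the type to which Lemma~\ref{PR:lem:ex:talagrand} applies, now with effective sample size $p_n$ in place of $n$. I would apply that lemma with a suitable small numerical $\epsilon$, as in the proof of Lemma~\ref{conc:Theta:tilde}.

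\emph{The three constants.} For the $L^\infty$-bound, the estimate $\sup_{t\in\Bc_\mf}\|r_t\|_\infty^2\le\mu\Phi^2\D_\mf$ from the proof of Lemma~\ref{conc:Theta:tilde} is inherited by the block averages, so one may take $M_1=(\mu\Phi^2\D_\mf)^{1/2}$. For the bound on $\E\,\sup_{t\in\Bc_\mf}|\nutilde^\ast_i(t)|^2$, I expand $\nutilde^\ast_i$ in the basis, bound the supremum over $\Bc_\mf$ by $\sum_\eta$ of the squared empirical coefficients (Cauchy--Schwarz, using $\sum_\eta\tau_\eta^2\le1$), exploit the independence of the blocks and strict stationarity to reduce each term to the variance of a block sum $\sum_{j=1}^{q_n}\lambda(X_j)\phi_\eta(X_j)$, apply Lemma~\ref{lem:var:bound:beta} \emph{inside} the block, sum over $\eta$ via Assumption~\ref{ass:model}, and finally use $\sum_{k=0}^{q_n}\beta_k\le\sum_{k\ge0}\beta_k$ together with $n=2p_nq_n$. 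This yields a bound of order $\Phi^2\D_\mf\mu\bigl(\sum_{k\ge0}\beta_k\bigr)/n$, so $H$ is taken of this order, and one checks that the numerical constants line up so that the deterministic quantity $\tfrac{8\Phi^2\D_\mf\mu}{n}\sum_{k\ge0}\beta_k$ subtracted in the statement dominates $2(1+2\epsilon)H^2$. For the variance parameter, Lemma~\ref{lem:var:bound:beta} applied to a single block average gives $\sup_{t\in\Bc_\mf}\Var(Z_0(t))\le\frac{4}{q_n}\sup_t\E\bigl[\bigl(\sum_{k=0}^{q_n}b_k\bigr)r_t^2(X_0)\bigr]$, which one bounds by a multiple of $\mu\Phi^2\D_\mf\bigl(\sum_{k\ge0}\beta_k\bigr)/q_n$; take $\upsilon$ of this order.

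\emph{Conclusion.} Plugging $M_1$, $H$, $\upsilon$ into Lemma~\ref{PR:lem:ex:talagrand} produces a term of the form $(\upsilon/p_n)\exp(-c\,p_nH^2/\upsilon)$ and a term of the form $(M_1^2/p_n^2)\exp(-c'\,p_nH/M_1)$; using $n=2p_nq_n$ and $q_n=\lceil\sqrt{n/2}\rceil$ (so that $p_n\asymp q_n\asymp\sqrt n$), the prefactors collapse to $\D_\mf/n$ and $\D_\mf/p_n^2$, and the exponents to quantities of order $q_n$ and $\sqrt n$, which is precisely the asserted estimate.

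\emph{Main obstacle.} The delicate point is exactly this last accounting: one must organise the numerical constants in the variance computations so that, simultaneously, the subtracted deterministic term $\tfrac{8\Phi^2\D_\mf\mu}{n}\sum_k\beta_k$ genuinely dominates $2(1+2\epsilon)H^2$, and the factors $q_n$ introduced by the block length (via the $4q_n$ in Lemma~\ref{lem:var:bound:beta} and the $1/q_n$ in the block averaging) cancel correctly against $p_nq_n=n/2$, leaving the claimed exponential rates $\exp(-c_2q_n)$ and $\exp(-c_3\sqrt n)$. The coupling and the block construction themselves are already available from the proof of Theorem~\ref{thm:adaptive:dep}, so no probabilistic input beyond Lemmata~\ref{PR:lem:ex:talagrand} and~\ref{lem:var:bound:beta} is needed.
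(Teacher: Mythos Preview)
Your proposal is correct and follows essentially the same route as the paper: apply Lemma~\ref{PR:lem:ex:talagrand} at the level of the $p_n$ independent blocks, take $M_1=(\mu\Phi^2\D_\mf)^{1/2}$ inherited from the proof of Lemma~\ref{conc:Theta:tilde}, obtain $H^2$ of order $\Phi^2\D_\mf\mu\bigl(\sum_k\beta_k\bigr)/n$ by expanding in the basis, using block independence and Lemma~\ref{lem:var:bound:beta} inside a block, and obtain $\upsilon$ of order $\Phi^2\D_\mf\mu\bigl(\sum_k\beta_k\bigr)/q_n$ by a second application of Lemma~\ref{lem:var:bound:beta} to a single block average. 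The constant bookkeeping you flag as the main obstacle is exactly the point the paper handles (somewhat loosely) in the same way.
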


\begin{proof}
	We state the proof for $i=1$ only.
	We want to apply Lemma~\ref{PR:lem:ex:talagrand} to $\nutilde_1 = \frac{1}{p_n} \sum_{\ell=0}^{p_n-1} Z_\ell$ where $Z_\ell = \frac{1}{q_n} \sum_{i=2lq_n+1}^{(2\ell+1)q_n} \{ r_t(X_i^\ast) - \E r_t(X_i^\ast) \}$,
	and the function $r_t$ is defined as in Lemma~\ref{conc:Theta:tilde}.
	Note that $Z_\ell$ and $Z_{\ell'}$ are independent for $\ell \neq \ell'$ by construction.
	Thus, it remains to find constants $M_1$, $H$ and $\upsilon$ satisfying the preconditions of Lemma~\ref{PR:lem:ex:talagrand}.
	
	\emph{Condition concerning $M_1$}: For each $\ell=0,\ldots,p_n-1$ it holds
	\begin{equation*}
		\sup_{t \in \Bc_\mf} \left\lVert \frac{1}{q_n} \sum_{i=2\ell q_n+1}^{(2\ell+1)q_n} r_t \right\rVert_\infty \leq \sup_{t \in \Bc_\mf} \Vert r_t \Vert_\infty \leq \sqrt{\mu \Phi^2 \D_\mf}
	\end{equation*}
	where the last step has already been shown in the proof of Lemma~\ref{conc:Theta:tilde}.
	
	\emph{Condition concerning $H$}: By Lemma~\ref{lem:var:bound:beta} we have
	\begin{align*}
		\E &\left[ \sup_{t \in \Bc_\mf} \left\lvert \frac{1}{p_nq_n} \sum_{\ell = 0}^{p_n-1} \sum_{i=2\ell q_n + 1}^{(2\ell + 1)q_n}\{ r_t(X_i^\ast) - \E r_t(X_i^\ast) \} \right\rvert^2 \right]\\
		&\hspace{14em}\leq \frac{1}{p_n^2q_n^2} \E \left[ \sum_{\eta \in \Ic_\mf} \left\lvert \sum_{\ell=0}^{p_n-1} \sum_{i=2\ell q_n + 1}^{(2\ell + 1)q_n} \{ \phi_\eta(X_i^\ast)\lambda(X_i^\ast) - \theta_\eta \} \right\rvert^2  \right]\\
		&\hspace{14em}= \frac{1}{p_nq_n^2} \sum_{\eta \in \Ic_\mf} \Var \left( \sum_{i=2\ell q_n + 1}^{(2\ell + 1)q_n} \phi_\eta(X_i^\ast) \lambda(X_i^\ast) \right)\\
		&\hspace{14em}\leq \frac{4 \Phi^2  \D_\mf}{p_nq_n} \cdot \left( \sum_{k=0}^{n} \beta_k \right)  \cdot \Vert \lambda \Vert_\infty^2\\
		&\hspace{14em}\leq\frac{8\Phi^2 \D_\mf \Vert \lambda \Vert_\infty^2}{n} \left( \sum_{k=0}^{\infty} \beta_k\right)  \eqdef H^2.
	\end{align*}
	
	\emph{Condition concerning $\upsilon$}: We have
	
	\begin{align*}
		\Var \left( \frac{1}{q_n} \sum_{i=2\ell q_n + 1}^{(2\ell + 1)q_n} r_t(X_i^\ast) \right) &= \frac{1}{q_n^2} \Var \left( \sum_{i=2\ell q_n + 1}^{(2\ell + 1)q_n} \sum_{\eta \in \Ic_\mf} \tau_\eta \lambda(X_i^\ast) \phi_\eta(X_i^\ast) \right)\\
		&\leq \frac{4}{q_n} \E \left[ \left( \sum_{k=0}^{n} b_k \right)  \left( \sum_{\eta \in \Ic_\mf} \lambda^2(X_i^*) \phi_\eta^2(X_i^*) \right)  \right]\\
		&\leq \frac{4\Phi^2}{q_n} \Vert \lambda \Vert_\infty^2 \D_\mf \left( \sum_{k=0}^\infty \beta_k\right)  \eqdef \upsilon.
	\end{align*}
	
	Now, application of Lemma~\ref{PR:lem:ex:talagrand} yields
	\begin{equation*}
		\E \left[ \left( \sup_{t \in \Bc_\mf} \vert \nutilde_i(t) \vert^2 - \frac{8\Phi^2 \D_\mf \mu (\sum_{k=0}^{\infty} \beta_k)}{n} \right)_+ \right] \leq c_1 \left\lbrace \frac{\D_m}{n} \cdot \exp(-c_2q_n) + \frac{\D_\mf}{p_n^2} \exp \left( - c_3 \sqrt{n} \right) \right\rbrace. 
	\end{equation*}
\end{proof}

\section{Bernstein inequality}

The following version of Bernstein's inequality is taken from~\cite{boucheron2016concentration}.

\begin{lemma}[Bernstein's inequality, \cite{boucheron2016concentration}, Corollary~2.11]\label{PR:prop:bernstein}
	Let $X_1,\ldots,X_n$ be independent real-valued random variables with $\vert X_i\vert \leq b$ for some $b>0$ almost surely for all $i \leq n$. Let $S = \sum_{i=1}^{n} (X_i - \E X_i)$ and $\upsilon = \sum_{i=1}^{n} \E [X_i^2]$. Then
	\begin{equation*}
		\P (S \geq t) \leq \exp \left( - \frac{t^2}{2(\upsilon +bt/3)} \right).
	\end{equation*}
\end{lemma}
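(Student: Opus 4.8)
The statement is a classical concentration bound, and the plan is to prove it by the Cram\'er--Chernoff (exponential Markov) method, following~\cite{boucheron2016concentration}. First I would reduce matters to a bound on the log-moment generating function. Writing $Y_i = X_i - \E X_i$, so that $S = \sum_{i=1}^{n} Y_i$, $\E Y_i = 0$ and each $Y_i$ is bounded, Markov's inequality applied to $e^{\theta S}$ together with independence gives, for every $\theta > 0$,
\begin{equation*}
	\P(S \geq t) \leq e^{-\theta t}\,\E e^{\theta S} = e^{-\theta t}\prod_{i=1}^{n} \E e^{\theta Y_i},
\end{equation*}
so it suffices to control $\psi(\theta) \defeq \sum_{i=1}^{n} \log \E e^{\theta Y_i}$ for $\theta$ in a suitable range and then optimise over $\theta$.

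The technical core is the per-summand estimate. Using $\log x \leq x - 1$ and $\E Y_i = 0$ one has $\log \E e^{\theta Y_i} \leq \E e^{\theta Y_i} - 1 = \E\big[e^{\theta Y_i} - 1 - \theta Y_i\big]$. Expanding $e^{u} - 1 - u = \sum_{k \geq 2} u^{k}/k!$, using the boundedness of $Y_i$ to dominate the higher moments by $\E[Y_i^{2}]$ times a geometric factor, and invoking the elementary inequality $k! \geq 2\cdot 3^{k-2}$, one obtains for $0 < \theta < 3/b$
\begin{equation*}
	\log \E e^{\theta Y_i} \leq \frac{\theta^{2}\,\E[Y_i^{2}]}{2}\sum_{k \geq 2}\Big(\frac{b\theta}{3}\Big)^{k-2} = \frac{\theta^{2}\,\E[Y_i^{2}]/2}{1 - b\theta/3}.
\end{equation*}
Summing over $i$ and using $\sum_{i=1}^{n}\E[Y_i^{2}] = \sum_{i=1}^{n}\Var(X_i) \leq \sum_{i=1}^{n}\E[X_i^{2}] = \upsilon$ gives $\psi(\theta) \leq \frac{\upsilon\theta^{2}/2}{1 - b\theta/3}$.

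Finally I would insert this into the Chernoff bound, so that $\P(S \geq t) \leq \exp\big(-\theta t + \frac{\upsilon\theta^{2}/2}{1 - b\theta/3}\big)$ for all $0 < \theta < 3/b$, and rather than differentiating I would simply substitute the known minimiser $\theta^{\star} = t/(\upsilon + bt/3)$, which lies in $(0, 3/b)$ since $\upsilon > 0$. Because then $1 - b\theta^{\star}/3 = \upsilon/(\upsilon + bt/3)$, the exponent collapses to $-t^{2}/(\upsilon + bt/3) + t^{2}/\big(2(\upsilon + bt/3)\big) = -t^{2}/\big(2(\upsilon + bt/3)\big)$, which is the asserted inequality. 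The one step demanding genuine care is the per-summand log-MGF estimate and the bookkeeping of the absolute constant $1/3$ appearing there; this is precisely Corollary~2.11 of~\cite{boucheron2016concentration}, which may be cited directly, and everything else is routine.
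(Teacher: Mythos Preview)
The paper does not prove this lemma at all; it simply quotes it from \cite{boucheron2016concentration}, so there is no ``paper's proof'' to compare against. Your Cram\'er--Chernoff argument is the standard route and the overall strategy is fine, but there is a genuine slip in precisely the step you flag as delicate. After centering you only have $|Y_i| = |X_i - \E X_i| \leq 2b$, not $|Y_i| \leq b$, so the moment domination $|\E[Y_i^k]| \leq b^{k-2}\E[Y_i^2]$ is not justified and can in fact fail: take $X_i = \pm b$ with probabilities $p$ and $1-p$; a direct computation gives $|\E[Y_i^3]|/\E[Y_i^2] = 2b\,|1-2p|$, which tends to $2b$ as $p \to 0$. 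Running your argument with the honest bound $|Y_i| \leq 2b$ yields $2b/3$ rather than $b/3$ in the denominator.

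The clean fix is to expand $e^{\theta X_i}$ rather than $e^{\theta Y_i}$. Since $|X_i| \leq b$, your series bound gives $\E[e^{\theta X_i} - 1 - \theta X_i] \leq \frac{\theta^2 \E[X_i^2]/2}{1 - b\theta/3}$ for $0 < \theta < 3/b$, and hence $\log \E e^{\theta X_i} \leq \theta\,\E X_i + \frac{\theta^2 \E[X_i^2]/2}{1 - b\theta/3}$. Using $\E e^{\theta S} = e^{-\theta \sum_i \E X_i}\prod_i \E e^{\theta X_i}$, the linear terms $\theta\,\E X_i$ cancel upon summation and you recover $\psi(\theta) \leq \frac{\theta^2 \upsilon/2}{1 - b\theta/3}$ with the correct constant and with $\upsilon = \sum_i \E[X_i^2]$ (the uncentered second moments, as in the statement). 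Your optimisation over $\theta$ then goes through unchanged.
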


\section{Concentration inequalities}

\subsection{A useful consequence of Talagrand's inequality}

The following lemma is a consequence from Talagrand's inequality and is taken from~\cite{chagny2015optimal}.
For a detailed proof, we refer to~\cite{chagny2013estimation}.

\begin{lemma}\label{PR:lem:ex:talagrand}
	Let $X_1,\ldots,X_n$ be i.i.d.~random variables with values in some Polish space and define $\nu_n(s) = \frac{1}{n} \sum_{i=1}^{n} s(X_i) - \E [s(X_i)]$, for $s$ belonging to a countable class $\Sc$ of measurable real-valued functions. Then, for any $\epsilon > 0$, there exist positive constants $c_1$, $c_2=\frac{1}{6}$, and $c_3$ such that
	\begin{align*}
		\E \left[ \left( \sup_{s \in \Sc} \vert \nu_n(s) \vert^2 - c(\epsilon)H^2 \right)_+ \right] &\\
		&\hspace{-3em}\leq c_1 \left\lbrace \frac{\upsilon}{n} \exp \left( -c_2 \epsilon \frac{nH^2}{\upsilon}\right) +\frac{M_1^2}{C^2(\epsilon) n^2} \exp\left( -c_3 C(\epsilon) \sqrt{\epsilon} \frac{nH}{M_1} \right) \right\rbrace,
	\end{align*}
	with $C(\epsilon) = (\sqrt{1+\epsilon} -1) \wedge 1$, $c(\epsilon) = 2 (1 + 2 \epsilon)$ and
	\begin{equation*}
		\sup_{s \in \Sc} \Vert s \Vert_\infty \leq M_1, \quad \E [ \sup_{s \in \Sc} \vert \nu_n(s) \vert ] \leq H, \quad \text{and} \quad \sup_{s \in \Sc} \Var(s(X_1)) \leq \upsilon.
	\end{equation*}
\end{lemma}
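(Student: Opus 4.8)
The plan is to derive this from a single application of Talagrand's concentration inequality for suprema of empirical processes, followed by squaring the deviation bound and integrating the resulting tail. Since $\Sc$ is countable, $Z \defeq \sup_{s\in\Sc}|\nu_n(s)|$ is a bona fide random variable, and applying the Bousquet (equivalently, Klein--Rio) form of Talagrand's inequality to the class $\Sc\cup(-\Sc)$ — whose centred increments are bounded by $2M_1$ and have variance at most $\upsilon$ — one obtains, for every $\eta>0$ and every $x>0$,
\begin{equation*}
	\P\!\left( Z \ge (1+\eta)H + \sqrt{\tfrac{2\upsilon x}{n}} + \kappa(\eta)\tfrac{M_1 x}{n} \right) \le e^{-x},
\end{equation*}
where $\kappa(\eta)$ is a constant of order $\tfrac13 + \tfrac1\eta$ and $H$ may be any upper bound for $\E Z$.

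Next I would square. Using $(a+b+c)^2 \le 2a^2 + 4b^2 + 4c^2$, on the complement of the above event one has $Z^2 \le 2(1+\eta)^2 H^2 + \tfrac{8\upsilon x}{n} + 4\kappa(\eta)^2\tfrac{M_1^2 x^2}{n^2}$. Now fix $\epsilon>0$ and choose $\eta=\eta(\epsilon)$ of order $C(\epsilon)=(\sqrt{1+\epsilon}-1)\wedge1$, small enough that $2(1+\eta)^2 \le c(\epsilon)-\epsilon = 2(1+2\epsilon)-\epsilon$; then $\kappa(\eta)^2 \lesssim C(\epsilon)^{-2}$. This yields, for all $v\ge0$, that if $x(v)$ denotes the positive root of $\tfrac{8\upsilon x}{n} + \tfrac{4\kappa(\eta)^2 M_1^2 x^2}{n^2} = \epsilon H^2 + v$, then $\P\big(Z^2 > c(\epsilon)H^2 + v\big) \le e^{-x(v)}$. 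The crucial point is that the threshold $c(\epsilon)H^2$ already carries a slack of $\epsilon H^2$ over the ``natural'' squared bound $2(1+\eta)^2H^2$ at $v=0$; this slack is precisely what produces the exponential prefactors in the final estimate.

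Since at least one of the two summands defining $x(v)$ must be $\ge(\epsilon H^2+v)/2$, we get the lower bound $x(v) \ge \tfrac{n(\epsilon H^2+v)}{16\upsilon} \wedge \tfrac{n}{M_1}\sqrt{\tfrac{\epsilon H^2+v}{8\kappa(\eta)^2}}$, hence
\begin{equation*}
	\P\!\left( Z^2 > c(\epsilon)H^2 + v \right) \le \exp\!\Big( -\tfrac{n(\epsilon H^2+v)}{16\upsilon} \Big) + \exp\!\Big( -c\,C(\epsilon)\tfrac{n\sqrt{\epsilon H^2+v}}{M_1} \Big).
\end{equation*}
Finally one integrates, using $\E[(Z^2 - c(\epsilon)H^2)_+] = \int_0^\infty \P(Z^2 > c(\epsilon)H^2 + v)\,dv$: the first term integrates to a quantity $\lesssim \tfrac{\upsilon}{n}\exp(-c_2\epsilon\tfrac{nH^2}{\upsilon})$, and for the second the substitution $w=\sqrt{\epsilon H^2+v}$ turns it into $\int_{\sqrt\epsilon H}^\infty e^{-cC(\epsilon)nw/M_1}\,2w\,dw \lesssim \tfrac{M_1^2}{C(\epsilon)^2 n^2}\exp(-c_3 C(\epsilon)\sqrt\epsilon\,\tfrac{nH}{M_1})$. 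Absorbing numerical factors into $c_1$, and being slightly more careful in the Talagrand step (with the Klein--Rio constants) to ensure $c_2$ may be taken equal to $\tfrac16$, gives the claim.

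The conceptual skeleton — Talagrand, square, integrate the tail — is entirely standard, so I expect the main obstacle to be purely bookkeeping: tying the free parameter $\eta$ to $\epsilon$ so that \emph{simultaneously} (i) the squared main term stays below $c(\epsilon)H^2$ with a remainder of exact order $\epsilon H^2$ and (ii) the Bernstein blow-up factor $\kappa(\eta)\asymp1/\eta$ is controlled by $C(\epsilon)^{-1}$, and then verifying that these choices propagate to the precise constants stated (in particular the value $c_2=\tfrac16$ and the $C(\epsilon)^{-2}$ prefactor). No genuinely new idea is needed beyond exploiting the built-in slack of the threshold $c(\epsilon)=2(1+2\epsilon)$.
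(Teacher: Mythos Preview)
The paper does not actually prove this lemma: it is quoted verbatim from \cite{chagny2015optimal}, with the detailed proof deferred to \cite{chagny2013estimation}. Your sketch --- Talagrand in the Klein--Rio/Bousquet form, squaring the deviation bound with a weighted Young inequality, then integrating the tail --- is precisely the standard route followed in those references, and the bookkeeping you outline (tuning $\eta$ against $\epsilon$ to produce the slack $\epsilon H^2$ while keeping the Bernstein factor of order $C(\epsilon)^{-1}$) is the right mechanism; so your proposal is correct and matches the proof the paper cites rather than provides.
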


\subsection{Concentration inequalities for point processes}

The following lemma is taken from~\cite{kroll2017concentration}.

\begin{lemma}\label{lem:conc:cox}
	Let $N_1,\ldots,N_n$ be independent Cox processes driven by \emph{finite} random measures $\eta_1,\ldots,\eta_n$ (that is, given $\eta_i$, $N_i$ is a Poisson point process with intensity measure $\eta_i$) that are conditionally independent given $\eta_1,\ldots,\eta_n$.
	Set $\nu_n(r) = \frac{1}{n} \sum_{k=1}^{n} \{ \int_{\X} r(x)dN_k(x) - \int_{\X} r(x)d\eta_k(x) \}$ for $r$ contained in a countable class of real-valued measurable functions.
	Then, for any $\epsilon > 0$, there exist constants $c_1$, $c_2=\frac{1}{6}$, and $c_3$ such that
	\begin{align*}
		\E \left[ \left( \sup_{r \in \Rc} \vert \nu_n(r) \vert^2 - c(\epsilon)H^2 \right)_+ \Bigg \vert \boldsymbol \eta \right]&\\
		&\hspace{-3em}\leq c_1 \left\lbrace \frac{\upsilon}{n} \exp \left( -c_2 \epsilon \frac{nH^2}{\upsilon} \right) + \frac{M_1^2}{C^2(\epsilon)n^2} \exp \left( - c_3 C(\epsilon) \sqrt{\epsilon} \frac{nH}{M_1} \right)   \right\rbrace 
	\end{align*}
	where $\boldsymbol \eta = (\eta_1,\ldots,\eta_n)$, $C(\epsilon) = (\sqrt{1+\epsilon} - 1) \wedge 1$, $c(\epsilon) = 2(1+2\epsilon)$ and $M_1$, $H$ and $\upsilon$ are such that
	\begin{equation*}
		\sup_{r \in \Rc} \Vert r \Vert_\infty \leq M_1, \quad \E [\sup_{r \in \Rc} \vert \nu_n(r) \vert  | \boldsymbol \eta] \leq H, \quad \sup_{r \in \Rc} \Var \left( \int_{\X} r(x)dN_k(x) \Big \vert \boldsymbol \eta \right) \leq \upsilon \quad \forall k. 
	\end{equation*}
\end{lemma}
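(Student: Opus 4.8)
\emph{Proof strategy.} The plan is to reduce the statement to a single inhomogeneous Poisson process, apply a Bernstein--Talagrand type deviation inequality there, and integrate the resulting tail bound into the ``expectation of the positive part'' form. Since the asserted inequality and the three side conditions on $M_1$, $H$ and $\upsilon$ are all conditional on $\boldsymbol\eta=(\eta_1,\dots,\eta_n)$, it suffices to argue for fixed finite intensity measures $\eta_1,\dots,\eta_n$, the $N_k$ then being genuinely independent Poisson processes. First I would use that the superposition $N\defeq\sum_{k=1}^n N_k$ is again a Poisson process with intensity measure $\mu\defeq\sum_{k=1}^n\eta_k$, so that $\nu_n(r)=\tfrac1n(\int_\X r\,dN-\int_\X r\,d\mu)$, with $\Vert r\Vert_\infty\leq M_1$ and $\int_\X r^2\,d\mu=\sum_{k=1}^n\Var(\int_\X r\,dN_k)\leq n\upsilon$. (If the integrand is allowed to depend on $k$, as happens in the applications to $\langle\widehat\Theta_n,t\rangle$, the same reduction works after embedding the $n$ processes into a single Poisson process on $\X\times\{1,\dots,n\}$.)

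The heart of the argument is a concentration inequality for $\sup_{r\in\Rc}|\nu_n(r)|$. I would invoke a Bernstein--Talagrand type bound for suprema of Poisson functionals, in the spirit of Reynaud-Bouret's inequality: with $\Rc$ countable, $\sup_{r\in\Rc}\Vert r\Vert_\infty\leq M_1$, $\sup_{r\in\Rc}\int_\X r^2\,d\mu\leq n\upsilon$ and $\E[\sup_{r\in\Rc}|\nu_n(r)|]\leq H$, it yields, for every $x>0$ and $\epsilon>0$,
\begin{equation*}
	\P\!\left(\sup_{r\in\Rc}|\nu_n(r)|\geq(1+\epsilon)H+\kappa_1(\epsilon)\sqrt{\tfrac{\upsilon x}{n}}+\kappa_2(\epsilon)\tfrac{M_1 x}{n}\right)\leq 2e^{-x},
\end{equation*}
the factor $2$ coming from applying the one-sided inequality to $\pm r$, and $\kappa_1(\epsilon),\kappa_2(\epsilon)$ being explicit. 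An alternative route that makes the sub-gamma (rather than sub-Gaussian) nature of the bound transparent is to Poissonise: conditionally on the total number of points of $N$, which is Poisson distributed, $\int_\X r\,dN$ is a sum of i.i.d.\ bounded variables, so Talagrand's inequality in the form of Lemma~\ref{PR:lem:ex:talagrand} applies, and integrating out the (light-tailed) number of points produces the same kind of estimate.

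It then remains to convert this deviation bound into the claimed one. Setting $Z\defeq\sup_{r\in\Rc}|\nu_n(r)|$ and using $\E[(Z^2-c(\epsilon)H^2)_+]=\int_0^\infty\P(Z^2>c(\epsilon)H^2+u)\,du$, for each $u\geq0$ one determines the largest $x=x(u)$ for which the deviation threshold $(1+\epsilon)H+\kappa_1(\epsilon)\sqrt{\upsilon x/n}+\kappa_2(\epsilon)M_1 x/n$ stays below $\sqrt{c(\epsilon)H^2+u}$. The choice $c(\epsilon)=2(1+2\epsilon)$ leaves a strictly positive gap $\sqrt{c(\epsilon)}-(1+\epsilon)>0$, so that already at $u=0$ one must spend $x(0)$ of order $\epsilon\,nH^2/\upsilon$ (respectively $C(\epsilon)\sqrt\epsilon\,nH/M_1$ on the range where the sup-norm term dominates), which is precisely what generates the exponential prefactors. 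Splitting $\int_0^\infty 2e^{-x(u)}\,du$ according to which of the two threshold terms is active and evaluating the two elementary integrals --- the ``variance'' one directly, the ``sup-norm'' one after the substitution $w=\sqrt u$ --- yields a bound of the announced form $c_1\{\tfrac{\upsilon}{n}\exp(-c_2\epsilon\tfrac{nH^2}{\upsilon})+\tfrac{M_1^2}{C^2(\epsilon)n^2}\exp(-c_3C(\epsilon)\sqrt\epsilon\tfrac{nH}{M_1})\}$. The main obstacle, I expect, is purely in the bookkeeping: one must carry the $\epsilon$-dependent constants through the Poisson concentration inequality and the subsequent tail integration accurately enough to land on exactly $c(\epsilon)=2(1+2\epsilon)$, $C(\epsilon)=(\sqrt{1+\epsilon}-1)\wedge1$ and $c_2=\tfrac16$; conceptually the argument is the exact analogue of the i.i.d.\ case recorded as Lemma~\ref{PR:lem:ex:talagrand}.
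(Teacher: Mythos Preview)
The paper does not prove this lemma at all: it is simply quoted from the external reference~\cite{kroll2017concentration} (the author's thesis), so there is no ``paper's own proof'' to compare against. That said, your sketch is the standard route and is essentially how such results are established: condition on $\boldsymbol\eta$ so that the $N_k$ become genuine independent Poisson processes, invoke a Talagrand-type deviation inequality for suprema of Poisson integrals (the relevant input being Reynaud-Bouret's inequality, of which Lemma~\ref{PR:prop:bernstein:PPP} is the single-function version), and then integrate the tail via $\E[(Z^2-c(\epsilon)H^2)_+]=\int_0^\infty\P(Z^2>c(\epsilon)H^2+u)\,du$ exactly as in the derivation of Lemma~\ref{PR:lem:ex:talagrand} from Talagrand's inequality in the i.i.d.\ case. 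The superposition trick and the alternative Poissonisation argument you mention are both legitimate ways to reduce to a single process; the bookkeeping with $c(\epsilon)$, $C(\epsilon)$ and $c_2=\tfrac16$ is indeed just a matter of tracking constants through the Klein--Rio/Chagny computation, not a conceptual obstacle.
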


The following lemma is a Bernstein type inequality for point processes and taken from~\cite{reynaud2003adaptive}.

\begin{lemma}[\cite{reynaud2003adaptive}, Proposition~7]\label{PR:prop:bernstein:PPP}
	Let $N$ be a Poisson point process on some measurable space $(\X, \Xs)$ with finite intensity measure $\mu$. 
	Let $g$ be a measurable function on $(\X, \Xs)$, essentially bounded, such that $\int_{\X} g^2(x) \mu(dx) > 0$.
	Then
	\begin{equation*}
		\P \left( \int_{\X} g(x)(dN(x) - \mu(dx)) \geq t \right)  \leq \exp \left( - \frac{t^2}{2 (\int_{\X} g^2(x)\mu(dx) + \Vert g \Vert_\infty t/3)} \right), \qquad t > 0.
	\end{equation*}
\end{lemma}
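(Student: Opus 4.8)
Write $Z = \int_\X g\,(dN - d\mu)$, set $v = \int_\X g^2(x)\,\mu(dx)$ (strictly positive by hypothesis) and $b = \Vert g \Vert_\infty < \infty$. The plan is to run the classical Cram\'er--Chernoff argument: for every $s > 0$, Markov's inequality applied to $e^{sZ}$ gives $\P(Z \geq t) \leq e^{-st}\,\E[e^{sZ}]$, so everything reduces to controlling the Laplace transform of $Z$ and then optimising in $s$.

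The one genuinely process-specific ingredient is the Laplace functional of a Poisson point process: for any measurable $h$ for which the right-hand side is finite,
\begin{equation*}
  \E\Big[\exp\Big(\int_\X h\,dN\Big)\Big] = \exp\Big(\int_\X (e^{h} - 1)\,d\mu\Big).
\end{equation*}
I would establish this first for simple $h$, where $\int h\,dN$ is a finite linear combination of independent Poisson variables and the identity is just the product of Poisson moment generating functions, and then extend to bounded measurable $h$ by monotone approximation. Applying it with $h = sg$ (bounded, since $g$ is essentially bounded) and pulling out the deterministic term $s\int g\,d\mu$ yields
\begin{equation*}
  \log \E[e^{sZ}] = \int_\X \big(e^{sg(x)} - 1 - sg(x)\big)\,\mu(dx).
\end{equation*}

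To bound the integrand I would use the elementary estimate $e^{u} - 1 - u \leq u^2\sum_{k\geq 2}|u|^{k-2}/k!$ (valid term by term since $u^k \leq u^2|u|^{k-2}$) together with $k! \geq 2\cdot 3^{k-2}$ for $k \geq 2$; hence whenever $|u| \leq c$ with $c < 3$ one obtains $e^u - 1 - u \leq u^2/\big(2(1-c/3)\big)$. Taking $0 < s < 3/b$, so that $|sg(x)| \leq sb < 3$ for $\mu$-almost every $x$, and integrating gives $\log\E[e^{sZ}] \leq s^2 v/\big(2(1 - sb/3)\big)$, whence $\P(Z \geq t) \leq \exp\big(-st + s^2v/(2(1-sb/3))\big)$. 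It then remains to optimise: choosing $s = t/(v + bt/3)$ --- which lies strictly between $0$ and $3/b$ precisely because $v > 0$ --- one has $1 - sb/3 = v/(v+bt/3)$ and $s(v + bt/3) = t$, so the exponent collapses to $-t^2/\big(2(v + bt/3)\big)$, which is exactly the asserted bound. I do not anticipate any real obstacle here; the only step deserving care is the rigorous justification of the Poisson Laplace functional for a general (non-simple) bounded $g$ via monotone approximation, together with the routine verification that the optimiser is compatible with the constraint $s < 3/b$.
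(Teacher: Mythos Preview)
Your argument is correct: the Poisson Laplace functional gives the exact cumulant generating function, your elementary bound $e^u-1-u \leq u^2/\big(2(1-|u|/3)\big)$ for $|u|<3$ is valid (the inequality $k!\geq 2\cdot 3^{k-2}$ holds by a one-line induction), and the optimisation in $s$ is carried out cleanly, with the hypothesis $\int g^2\,d\mu>0$ used precisely to ensure the optimiser $s=t/(v+bt/3)$ satisfies $s<3/b$.

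There is nothing to compare against in the paper itself: the lemma is quoted from \cite{reynaud2003adaptive} (their Proposition~7) without proof, and your derivation is in fact the standard Cram\'er--Chernoff route used there. The only cosmetic point is that the approximation argument you mention for the Laplace functional is entirely routine once $\mu$ is finite and $g$ is essentially bounded, so no genuine difficulty hides there.
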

\printbibliography

\end{document}